\newtheorem{lemma}{Lemma}
\newtheorem*{lemma*}{Lemma}
\newtheorem{theorem}{Theorem}
\newtheorem*{theorem*}{Theorem}
\theoremstyle{definition}
\newcommand\dotminus{\mathbin{\dot{-}}}
\newcommand\dotoplus{\mathbin{\dot{\oplus}}}
\DeclareMathOperator\mat{M}
\DeclareMathOperator\elem{E}
\DeclareMathOperator\gelem{GE}
\DeclareMathOperator\kfunctor{K}
\DeclareMathOperator\slin{SL}
\DeclareMathOperator\SC{SC}
\DeclareMathOperator\Cent{C}
\DeclareMathOperator\Ker{Ker}
\newcommand\eps{\varepsilon}
\newcommand\leqt{\trianglelefteq}
\newcommand\op{{\mathrm{op}}}
\newcommand\id{{\mathrm{id}}}
\DeclareMathOperator\Aut{Aut}
\DeclareMathOperator\Image{Im}
\DeclareMathOperator\Spec{Spec}
\newcommand\fp{{\mathrm{fp}}}
\newcommand\add{{\mathrm a}}
\newcommand\mult{{\mathrm m}}
\newcommand{\up}[2]{{^{#1}\!{#2}}}
\newcommand{\Set}{\mathbf{Set}}
\newcommand{\Group}{\mathbf{Grp}}
\newcommand{\Ring}{\mathbf{CRing}}
\newcommand{\Rng}{\mathbf{CRng}}
\DeclareMathOperator\Pro{Pro}
\DeclareMathOperator\Ind{Ind}
\DeclareMathOperator\Sub{Sub}
\newcommand{\Cat}{\mathbf{Cat}}
\newcommand{\IP}{\mathbf{IP}}
\newcommand{\IPRng}{\mathbf{IPRng}}
\newcommand{\uradical}{{\mathrm R_{\mathrm u}}}
\title{
    Locally isotropic elementary groups
}
\author{
    Egor Voronetsky\thanks{
        Research is supported by the Russian Science Foundation grant 19-71-30002.
    } \\
    Chebyshev Laboratory, \\
    St. Petersburg State University, \\
    14th Line V.O., 29B, \\
    Saint Petersburg 199178 Russia \\
}
\begin{document}
\maketitle

\begin{abstract}
    We construct elementary subgroups of all reductive groups of the local isotropic rank \(\geq 2\) over rings and prove their basic properties. In particular, our results may be applied to the automorphism groups of any finitely generated projective modules over commutative unital rings of rank \(\geq 3\) at every prime ideal.
\end{abstract}


\section{Introduction}

Let \(K\) be a unital commutative ring and \(\Phi\) be a root system of rank \(\geq 2\) (reduced and irreducible). The Chevalley group
\(
    G(\Phi, K)
\) is the point group of the Chevalley --- Demazure group scheme with the root system \(\Phi\) and some choice of the weight lattice, e.g. of adjoint type or simply connected type. This group has so-called root elements
\(
    t_\alpha(x)
\) for
\(
    \alpha \in \Phi
\),
\(
    x \in K
\), and the elementary subgroup
\(
    \elem(\Phi, K) \leq G(\Phi, K)
\) is the subgroup generated by all root elements. It is well-known \cite{abe-local, abe, fuan, kopeiko, suslin-kopeiko, taddei} that the elementary subgroup is normal and perfect (if \(\Phi\) is of type \(\mathsf B_2\) or \(\mathsf G_2\), then the perfectness holds only under the additional assumption that \(K\) does not have residue fields isomorphic to \(\mathbb F_2\)). Moreover, if \(K\) is finite-dimensional, then
\(
    G(\Phi, K) / \elem(\Phi, K)
\) is solvable \cite{k1-nilp}, i.e.
\(
    \elem(\Phi, K)
\) is the largest perfect subgroup of
\(
    G(\Phi, K)
\). Finally, if
\(
    G(\Phi, K)
\) is simply connected, then Alexei Stepanov showed in \cite{stepanov} that the width in terms of root elements of a commutator
\(
    [x, g]
\) for
\(
    x \in \elem(\Phi, K)
\),
\(
    g \in G(\Phi, K)
\) is bounded uniformly on \(K\) (i.e. the bound depends only on the root system \(\Phi\)).

The elementary group is used in algebraic \(\kfunctor\)-theory, the factor-group
\(
    \kfunctor_1(\Phi, K) = G(\Phi, K) / \elem(\Phi, K)
\) is an unstable \(\kfunctor_1\)-functor. Also, classifications of normal and subnormal subgroups of
\(
    G(\Phi, K)
\) essentially use the elementary subgroup. For example,
\(
    N \leq \slin(n, K)
\) is normalized by the elementary subgroup
\(
    \elem(\mathsf A_{n - 1}, K)
\) for
\(
    n \geq 3
\) if and only if
\(
    \elem(\mathsf A_{n - 1}, K, \mathfrak a)
    \leq N
    \leq \SC(n, K, \mathfrak a)
\) for some explicitly defined groups: the relative elementary subgroup
\(
    \elem(\mathsf A_{n - 1}, K, \mathfrak a)
\) is given by generators and the full congruence subgroup
\(
    \SC(n, K, \mathfrak a)
\) is given by equations.

It is natural to generalize these results to isotropic reductive groups. Over arbitrary unital commutative rings reductive group schemes are considered in \cite{sga3} (see also \cite{conrad}). The notion of isotropic reductive groups over fields \cite{borel-tits} is generalized to local rings (more generally, semi-local rings with connected spectra) in \cite[Exp. XXVI]{sga3}. Over arbitrary rings Victor Petrov and Anastasia Stavrova \cite{petrov-stavrova} considered reductive groups with globally defined parabolic subgroups (non-trivially intersecting all simple factors of the group scheme over geometric points), they proved the normality of the elementary subgroup in this context. In \cite{luzgarev-stavrova} Alexander Luzgarev and Stavrova showed that the elementary subgroup is perfect (under a natural additional assumption). Stavrova and Stepanov also proved \cite{stavrova-stepanov} that such isotropic reductive groups admit a standard classification of subgroups normalized by the elementary group if the structure constants are invertible in \(K\). For twisted Chevalley groups normality of the elementary subgroup was already proved in \cite{bak-vavilov, suzuki}. See also the survey \cite{survey} for details.

We are interested in the following more general situation. Suppose that \(G\) is a reductive group scheme over \(K\) such that it is sufficiently isotropic (in the sense of \cite{sga3} or \cite{petrov-stavrova}) Zariski locally, i.e. at all localizations at prime ideals. A typical example is the twisted general linear group
\(
    G(K) = \Aut_K(P)
\), where \(P\) is a finite projective \(K\)-module without direct summands and of the rank at least \(3\) at each point. Even the definition of the elementary subgroup for such groups is non-trivial since \(P\) does not have any unimodular vectors. We construct the elementary subgroup as an abstract group in theorem \ref{e-discr} and describe it in an elementary (though not entirely constructive) way. In theorem \ref{elem-gen} we show that the elementary subgroup may be generated by a scheme morphism (as in the case of Chevalley groups) and it is normal. The next theorem \ref{par-gen} states that our elementary groups coincide with the groups defined in \cite{petrov-stavrova} if the global isotropic rank is positive. Finally, the perfectness of the elementary group is proved in theorem \ref{perfect}.

Actually, our ultimate goal is to study Steinberg groups over locally isotropic reductive groups. Even in such generality centrality of \(\kfunctor_2\)-functor may be proved using a suitable localization technique \cite{linear-k2, thesis}, but we still have to construct the Steinberg group and \(\kfunctor_2\)-functor. This problem it technically more difficult than the case of \(\kfunctor_1\), so in this paper we work only with the elementary subgroup using the localization-and-patching method from \cite{yoga}. As in \cite{linear-k2, thesis} we use homotopes of the base ring instead of prime ideals and the formalism of ind- and pro-completion of categories.
Homotopes 
allow us to prove various functorial properties of the elementary subgroups, in particular we are able to use generic elements of various affine schemes. Ind- and pro-completions are necessary to adapt the localization-and-patching method to groups defined by homotopes instead of principal ideals, since such groups are not subgroups of the base group.

\section{Isotropic pinnings}

All rings and algebras in this paper are commutative and associative, but not necessarily unital. Throughout the paper we fix a unital ring \(K\). If \(\mathbf C\) is a category, then
\(
    X \in \mathbf C
\) means that \(X\) is an object of \(\mathbf C\). The categories of sets, groups, unital \(K\)-algebras, and non-unital \(K\)-algebras are denoted by \(\Set\), \(\Group\), \(\Ring_K\), and \(\Rng_K\) respectively. We identify schemes over \(K\) and corresponding covariant functors
\(
    \Ring_K \to \Set
\). A principal open subscheme
\(
    \Spec(K_s) \subseteq \Spec(K)
\) is also denoted by
\(
    \mathcal D(s)
\).

In this paper root systems are always crystallographic, but neither irreducible nor reduced in general, i.e. we admit components of types
\(
    \mathsf{BC}_\ell
\) for
\(
    \ell \geq 1
\). Let us say that roots \(\alpha\) and \(\beta\) in a root system are \textit{neighbors} if they are linearly independent, non-orthogonal, and there are no roots in the open angle
\(
    \mathbb R_{> 0} \alpha + \mathbb R_{> 0} \beta
\).

Let \(\Phi\) and \(\Psi\) be two root systems. A map
\(
    f \colon \Phi \cup \{0\} \to \Psi \cup \{0\}
\) is called a \textit{morphism} of root systems if it is induced by a linear map
\(
    \mathbb R \Phi \to \mathbb R \Psi
\) of the ambient spaces. A morphism \(f\) of root systems is called a \textit{factor-morphism} if it is surjective. The next lemma implies that for any morphism
\(
    f \colon \Phi \cup \{0\} \to \Psi \cup \{0\}
\) and any irreducible component
\(
    \Phi' \subseteq \Phi
\) not in the kernel of \(f\) there is unique irreducible component
\(
    \Psi' \subseteq \Psi
\) such that
\(
    f(\Phi') \subseteq \Psi' \cup \{0\}
\).

\begin{lemma} \label{root-sys-dec}
    Let \(\Phi\) be an irreducible root system. Then it cannot be covered by two hyperplanes of the ambient space (i.e. by subspaces of codimension \(1\)). If
    \(
        H \leq \mathbb R \Phi
    \) is a hyperplane and
    \(
        \alpha \in \Phi \cap H
    \), then \(\alpha\) has a neighbor in
    \(
        \Phi \setminus H
    \).
\end{lemma}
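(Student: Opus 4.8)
The plan is to prove the two assertions in turn, using reflections to pin down where roots can lie relative to the hyperplanes and deriving a contradiction with irreducibility. For the first assertion, suppose toward a contradiction that \(\Phi \subseteq H_1 \cup H_2\) for two hyperplanes \(H_i = \ker \phi_i\) with nonzero linear functionals \(\phi_i \colon \mathbb R\Phi \to \mathbb R\). Since \(\Phi\) spans \(\mathbb R\Phi\), neither \(H_i\) contains \(\Phi\); in particular \(H_1 \neq H_2\) (else \(\Phi \subseteq H_1\)), and the sets \(A = \Phi \setminus H_2 \subseteq H_1\) and \(B = \Phi \setminus H_1 \subseteq H_2\) are both nonempty. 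Writing \(C = \Phi \cap H_1 \cap H_2\) we have \(\Phi = A \sqcup B \sqcup C\). First I would show \(A \perp B\): for \(\alpha \in A\), \(\beta \in B\) with \(\langle \alpha, \beta \rangle \neq 0\), the root \(s_\alpha(\beta) = \beta - \langle \beta, \alpha^\vee\rangle\alpha\) has \(\phi_1(s_\alpha(\beta)) = \phi_1(\beta) \neq 0\), so it lies in \(H_2\); but \(\phi_2(s_\alpha(\beta)) = -\langle\beta,\alpha^\vee\rangle\,\phi_2(\alpha) \neq 0\), a contradiction. Hence \(\langle\alpha,\beta\rangle = 0\), and so the spans satisfy \(\mathbb R A \perp \mathbb R B\).

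The crux is to absorb the roots of \(C = \Phi \cap H_1 \cap H_2\), which both hyperplanes cover simultaneously and which elementwise orthogonality \(A \perp B\) does not directly control; I expect this to be the main obstacle. The key observation is that if \(\gamma \in C\) is non-orthogonal to some \(\alpha \in A\), then \(s_\alpha(\gamma) = \gamma - \langle\gamma,\alpha^\vee\rangle\alpha\) satisfies \(\phi_1(s_\alpha(\gamma)) = 0\) and \(\phi_2(s_\alpha(\gamma)) = -\langle\gamma,\alpha^\vee\rangle\,\phi_2(\alpha) \neq 0\), so \(s_\alpha(\gamma) \in A\), whence \(\gamma = s_\alpha(\gamma) + \langle\gamma,\alpha^\vee\rangle\alpha \in \mathbb R A\). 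I therefore split \(C = C_A \sqcup C_B\), where \(C_A\) consists of the \(\gamma\) non-orthogonal to some element of \(A\) (so \(C_A \subseteq \mathbb R A\)) and \(C_B = C \setminus C_A\) consists of those orthogonal to all of \(A\) (so \(C_B \perp \mathbb R A\)). Setting \(P = A \cup C_A\) and \(Q = B \cup C_B\), we get \(P \subseteq \mathbb R A\) while \(Q \perp \mathbb R A\), since \(B \perp \mathbb R A\) and \(C_B \perp \mathbb R A\); hence \(P \perp Q\). This exhibits \(\Phi\) as a disjoint union of two nonempty mutually orthogonal subsets, contradicting irreducibility.

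For the second assertion, let \(\alpha \in \Phi \cap H\). I first claim there is a root \(\beta_0 \in \Phi \setminus H\) with \(\langle\alpha,\beta_0\rangle \neq 0\): otherwise every root outside \(H\) lies in the hyperplane \(\alpha^\perp\), so \(\Phi \subseteq H \cup \alpha^\perp\); but \(\alpha \in H\) while \(\langle\alpha,\alpha\rangle \neq 0\) gives \(\alpha \notin \alpha^\perp\), so \(H \neq \alpha^\perp\), contradicting the first assertion. Replacing \(\beta_0\) by \(-\beta_0\) if necessary, I may assume \(\langle\alpha,\beta_0\rangle > 0\). Working in the plane \(\Pi = \mathbb R\alpha + \mathbb R\beta_0\), I choose \(\beta \in \Phi \cap \Pi\) minimizing the angle to \(\alpha\) among the roots \(\gamma\) with \(\langle\alpha,\gamma\rangle > 0\) and \(\gamma \notin \mathbb R\alpha\) (a nonempty set, as it contains \(\beta_0\)). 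Then \(\alpha\) and \(\beta\) are linearly independent and non-orthogonal, and no root lies in the open cone \(\mathbb R_{>0}\alpha + \mathbb R_{>0}\beta\): any such root lies in \(\Pi\), has strictly smaller angle to \(\alpha\) and positive inner product with it, contradicting minimality. Hence \(\alpha\) and \(\beta\) are neighbors. Finally \(\beta \notin H\): since \(\beta_0 \notin H\), the intersection \(H \cap \Pi\) is the line \(\mathbb R\alpha\), and \(\beta \notin \mathbb R\alpha\) by linear independence, so \(\beta \in (\Phi \cap \Pi) \setminus H\), as required.
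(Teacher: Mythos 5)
Your proof is correct, but on the first assertion it takes a genuinely different route from the paper's. The paper proves a stronger claim --- an irreducible \(\Phi\) cannot be covered by \emph{any} two proper subspaces \(V_1 \cup V_2\) --- by induction on \(\dim(V_1 \cap V_2)\): a root \(\alpha \in \Phi \cap V_1 \cap V_2\) is orthogonal to one of the two sides, so the cover can be refined to \(V_1 \cup (V_2 \cap \alpha^\perp)\) or \((V_1 \cap \alpha^\perp) \cup V_2\), and the induction closes; this is why arbitrary proper subspaces, not just hyperplanes, must enter the argument even though the lemma only mentions hyperplanes. You instead dispose of the intersection roots \(C = \Phi \cap H_1 \cap H_2\) in a single step: each \(\gamma \in C\) non-orthogonal to some \(\alpha \in A\) is forced into \(\mathbb{R}A\) by the reflection computation, so \(\Phi = (A \cup C_A) \sqcup (B \cup C_B)\) is an orthogonal partition of \(\Phi\) into two nonempty parts, contradicting irreducibility directly with no induction. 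Both arguments rest on the same reflection trick (your \(s_\alpha(\beta)\) computation is exactly what the paper's ``clearly, \(\Phi_1\) and \(\Phi_2\) are orthogonal'' conceals), but yours is more elementary and self-contained, while the paper's yields the more general two-subspace statement. On the second assertion you also do more than the paper: the paper merely notes that \(\Phi\) cannot be covered by \(H\) and \(\alpha^\perp\), leaving implicit the upgrade from ``some root outside \(H\) non-orthogonal to \(\alpha\)'' to ``a \emph{neighbor} of \(\alpha\) outside \(H\)''; your minimal-angle choice inside the plane \(\Pi = \mathbb{R}\alpha + \mathbb{R}\beta_0\), combined with the observation that \(H \cap \Pi = \mathbb{R}\alpha\) so the minimizer stays outside \(H\), supplies precisely that missing step.
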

\begin{proof}
    We show that \(\Phi\) cannot be covered by a union of proper subspaces
    \(
        V_1 \cup V_2
    \) by induction on
    \(
        \dim(V_1 \cap V_2)
    \). Clearly, the subsets
    \(
        \Phi_1 = \Phi \cap V_1 \setminus V_2
    \) and
    \(
        \Phi_2 = \Phi \cap V_2 \setminus V_1
    \) are orthogonal, so the case
    \(
        \Phi \cap V_1 \cap V_2 = \varnothing
    \) (and the induction base
    \(
        \dim(V_1 \cap V_2) = 0
    \)) follows from the irreducibility. Take any
    \(
        \alpha \in \Phi \cap V_1 \cap V_2
    \). If \(\alpha\) is not orthogonal to \(\Phi_1\), then it lies in the span of \(\Phi_1\) and orthogonal to \(\Phi_2\). In other words,
    \(
        \Phi
        \subseteq V_1 \cup (V_2 \cap \alpha^\perp)
    \) or
    \(
        \Phi
        \subseteq (V_1 \cap \alpha^\perp) \cup V_2
    \), where
    \(
        \alpha^\perp
    \) is the orthogonal complement to \(\alpha\). Since
    \(
        \dim(V_1 \cap V_2 \cap \alpha^\perp)
        = \dim(V_1 \cap V_2) - 1
    \), we conclude by the induction assumption.

    To prove the second assertion, note that \(\Phi\) cannot be covered by \(H\) and
    \(
        \alpha^\perp
    \).
\end{proof}

Now let \(G\) be a reductive group scheme over \(K\). Recall \cite[Exp. XXIII, definition 1.1]{sga3} (or \cite[definition 6.1.1]{conrad}) that a \textit{pinning} of \(G\) consists of
\begin{itemize}

    \item a maximal torus
    \(
        T \leq G
    \);

    \item an isomorphism
    \(
        T \cong \mathbb G_\mult^k
    \) such that the roots and coroots are constant elements of
    \(
        \mathbb Z^k
    \) and the dual lattice
    \(
        \up k {\mathbb Z}
    \) respectively, the root subspaces of the Lie algebra \(\mathfrak g\) of \(G\) are trivial linear bundles;

    \item a basis
    \(
        \Delta \subseteq \Phi
    \);

    \item trivializing sections
    \(
        x_\alpha \in \mathfrak g_\alpha
    \) for
    \(
        \alpha \in \Delta
    \), where
    \(
        \mathfrak g_\alpha
    \) is the \(\alpha\)-th weight submodule of the Lie algebra \(\mathfrak g\).

\end{itemize}
We say that an \textit{isotropic pinning} of \(G\) is a tuple of
\begin{itemize}

    \item a torus
    \(
        T \leq G
    \);

    \item an isomorphism
    \(
        T \cong \mathbb G_\mult^k
    \) such that the roots are constant elements of
    \(
        \mathbb Z^k
    \) and form a root system \(\Psi\);

    \item a basis
    \(
        \Delta \subseteq \Psi
    \);

    \item isomorphisms
    \(
        \mathfrak g_\alpha \cong K^{m_\alpha}
    \) for all
    \(
        \alpha \in \Psi
    \)

\end{itemize}
with the following property: there is an fppf extension
\(
    K \subseteq K'
\) and a pinning of \(G_{K'}\) with the maximal torus
\(
    T' \cong \mathbb G_\mult^{k'}
\) and the root system \(\Phi\) such that
\(
    T_{K'} \leq T'
\) and this inclusion is given by a constant homomorphism of the weight lattices
\(
    \mathbb Z^{k'} \to \mathbb Z^k
\), so the induced map
\(
    \Phi \cup \{0\} \to \Psi \cup \{0\}
\) is surjective (i.e. a factor-morphism). The \textit{rank} of an isotropic pinning is smallest rank of the components of \(\Psi\) if no component of \(\Phi\) maps to zero and \(0\) otherwise. In other words, the rank is positive if and only if \(T\) does not commute with all simple factors of
\(
    G / \Cent(G)
\) at every geometric point.

An isotropic pinning is usually denoted just by
\(
    (T, \Psi)
\). By definition, every pinning may be considered as an isotropic pinning (under some choice of bases of
\(
    \mathfrak g_\alpha
\) for
\(
    \alpha \in \Phi \setminus \Delta
\)). An isotropic pinning
\(
    (T, \Psi)
\) is \textit{contained} in an isotropic pinning
\(
    (T', \Psi')
\) if
\(
    T \leq T'
\) and the inclusion is given by a constant homomorphism of the weight lattices
\(
    \mathbb Z^{k'} \to \mathbb Z^k
\), so the induced map
\(
    \Psi' \cup \{0\} \to \Psi \cup \{0\}
\) is a factor-morphism and the rank of
\(
    (T, \Psi)
\) is at most the rank of
\(
    (T', \Psi')
\). Each isotropic pinning is contained in a pinning after an fppf extension.

Recall from \cite[\S 2.1]{thesis} and \cite[\S 4]{twisted-forms} that a \textit{\(2\)-step nilpotent \(K\)-module}
\(
    (M, M_0)
\) consists of
\begin{itemize}

    \item a group \(M\) with the group operation \(\dotplus\) and a \(2\)-step nilpotent filtration
    \(
        M_0 \leq M
    \) (i.e.
    \(
        [M, M] \leq M_0
    \) and
    \(
        [M, M_0] = \dot 0
    \));

    \item a left \(K\)-module structure on \(M_0\);

    \item a right action
    \(
        ({-}) \cdot ({=}) \colon M \times K \to M
    \) of the multiplicative monoid \(K^\bullet\) by group endomorphisms

\end{itemize}
such that
\begin{itemize}

    \item
    \(
        [m \cdot k, m' \cdot k'] = kk' [m, m']
    \);

    \item
    \(
        m \cdot (k + k')
        = m \cdot k
        \dotplus kk' \tau(m)
        \dotplus m \cdot k'
    \) for some (uniquely determined)
    \(
        \tau(m) \in M_0
    \);

    \item
    \(
        m_0 \cdot k = k^2 m_0
    \) for
    \(
        m_0 \in M_0
    \).

\end{itemize}
It easily follows that
\(
    m \cdot 0 = \dot 0
\),
\(
    m \cdot (-k) = k^2 \tau(m) \dotminus m \cdot k
\),
\(
    \tau(\dot 0) = \dot 0
\),
\(
    \tau(m \dotplus m')
    = \tau(m) \dotplus [m, m'] \dotplus \tau(m')
\),
\(
    \tau(m \cdot k) = k^2 \tau(m)
\),
\(
    \tau(\dotminus m) = -\tau(m)
\), and
\(
    \tau(m_0) = 2m_0
\) for
\(
    m_0 \in M_0
\). Every \(K\)-module \(M\) admits two natural structures of a \(2\)-step nilpotent \(K\)-module with the smallest nilpotent filtration
\(
    M_0 = 0
\) and the largest one
\(
    M_0 = M
\). Scalar extensions of \(2\)-step nilpotent modules are defined in \cite[\S 2.1]{thesis} and \cite[\S 4]{twisted-forms}.

Let
\(
    (M, M_0)
\) and
\(
    (N, N_0)
\) be \(2\)-step nilpotent \(K\)-modules. A map
\(
    q \colon M \to N
\) is called \textit{\(K\)-quadratic} if
\begin{itemize}

    \item
    \(
        q(M_0) \subseteq N_0
    \) and
    \(
        q|_{M_0} \colon M_0 \to N_0
    \) is \(K\)-linear,

    \item
    \(
        q(m \dotplus m') = q(m) + b(m, m') + q(m')
    \) for some (uniquely determined) \(K\)-bilinear map
    \(
        b \colon M / M_0 \times M / M_0 \to N_0
    \),

    \item
    \(
        q(m \cdot k) = q(m) \cdot k
    \).

\end{itemize}
It follows that
\(
    q(\dot 0) = \dot 0
\),
\(
    q(\dotminus m) = b(m, m) \dotminus q(m)
\),
\(
    q(\tau(m)) = \tau(q(m)) \dotminus b(m, m)
\), and
\(
    q([m, m'])
    = [q(m), q(m')]
    \dotplus b(m, m')
    \dotminus b(m', m)
\).

Let
\(
    (T, \Psi)
\) be an isotropic pinning of \(G\). We claim that there exist (necessarily unique) \textit{root subgroups}
\(
    U_\alpha \leq G
\) for \(
    \alpha \in \Psi
\) satisfying the following properties:
\begin{itemize}

    \item The subgroups \(U_\alpha\) are preserved under base change, they are smooth closed subschemes.

    \item If
    \(
        (T, \Psi)
    \) is a pinning, then \(U_\alpha\) are the ordinary root subgroups.

    \item If
    \(
        (T, \Psi) \subseteq (T', \Psi')
    \), then
    \(
        U_\alpha
        = \prod_{\substack{
            \alpha' \in \Psi'\\
            \Image(\alpha') \in \{\alpha, 2 \alpha\}
        } } U_{\alpha'}
    \) for every
    \(
        \alpha \in \Psi
    \).

\end{itemize}

If \(U_\alpha\) exists, then by descent the action of \(T\) on \(U_\alpha\) factors through
\(
    \alpha \colon T \to \mathbb G_\mult
\) and also through
\(
    \mathbb G_\mult \to \mathbb A^1
\) (in the unique way), where
\(
    \mathbb A^1
\) is considered as a multiplicative monoid. We construct the root subgroups as follows:
\begin{itemize}

    \item If
    \(
        \alpha \in \Psi \setminus 2 \Psi
    \), then
    \(
        U_\alpha = \bigcap_\lambda U_G(\lambda)
    \), where \(\lambda\) runs over all constant coweights of \(T\) such that
    \(
        \langle \alpha, \lambda \rangle > 0
    \). Here the subfunctor
    \(
        U_G(\lambda)(K')
        = \{
            g \in G(K')
        \mid
            \lim_{t \to 0} \up{\lambda(t)}{g} = 1
        \}
    \) is a closed subscheme of \(G\), see \cite[theorem 4.1.7]{conrad} for details.

    \item If
    \(
        \alpha \in \Psi \cap 2 \Psi
    \), then
    \(
        U_\alpha \leq U_{\alpha / 2}
    \) is the locus where the action of
    \(
        \mathbb A^1
    \) on
    \(
        U_{\alpha / 2}
    \) factors through the squaring map
    \(
        \mathbb A^1 \to \mathbb A^1
    \). In other words, for every \(K'\) the set
    \(
        U_\alpha(K')
    \) consists of
    \(
        g \in U_{\alpha / 2}(K')
    \) such that the morphism
    \(
        \mathbb A^1_{K'} \to G_{K'},\,
        t \mapsto \up tg
    \) corresponding to
    \(
        \alpha / 2
    \) factors through
    \(
        \mathbb A^1_{K'} \to \mathbb A^1_{K'}
    \).

\end{itemize}

Moreover, we have the following:
\begin{itemize}

    \item If
    \(
        \alpha \in \Psi \setminus \frac 1 2 \Psi
    \), then \(U_\alpha\) is abelian with the Lie algebra
    \(
        \mathfrak g_\alpha
    \) and there is a unique \(T\)-equivariant isomorphism
    \(
        t_\alpha
        \colon \mathbb G_\add^{m_\alpha}
        \to U_\alpha
    \) inducing the coordinatization isomorphism on the Lie algebras.

    \item If
    \(
        \alpha \in \Psi \cap \frac 1 2 \Psi
    \), then \(U_\alpha\) is \(2\)-step nilpotent with the nilpotent filtration
    \(
        U_{2 \alpha} \leq U_\alpha
    \) and the Lie algebra
    \(
        \mathfrak g_\alpha
        \oplus \mathfrak g_{2 \alpha}
    \),
    \(
        U_\alpha / U_{2 \alpha}
    \) is representable, and there is a unique \(T\)-equivariant isomorphism
    \(
        t_\alpha
        \colon \mathbb G_\add^{m_\alpha}
        \to U_\alpha / U_{2 \alpha}
    \) inducing the coordinatization isomorphism on the Lie algebras.

    \item Consider the fppf sheaf \(U_\alpha\) on the category of unital \(K\)-algebras if
    \(
        \alpha \in \Psi \cap \frac 1 2 \Psi
    \). By descent, it is a sheaf of \(2\)-step nilpotent modules. Using \cite[lemma 13 and proposition 3]{thesis} (or \cite[proposition 2]{twisted-forms}) it is easy to see that the \(2\)-step nilpotent \(K\)-module
    \(
        U_\alpha(K)
    \) is \textit{universal}, i.e. its scalar extensions are always defined, and such scalar extensions are the values of \(U_\alpha\). Since
    \(
        U_\alpha(K) / U_{2 \alpha}(K)
    \) is a free \(K\)-module,
    \(
        U_\alpha(K)
    \) splits, i.e. there is an isomorphism
    \(
        t_\alpha
        \colon \mathbb G_\add^{m_{2 \alpha}}
        \dotoplus \mathbb G_\add^{m_\alpha}
        \to U_\alpha
    \) of sheaves of \(2\)-step nilpotent modules (such a structure on
    \(
        \mathbb G_\add^{m_{2 \alpha}}
        \dotoplus \mathbb G_\add^{m_\alpha}
    \) is given by some \(K\)-bilinear \(2\)-cocycle
    \(
        c
        \colon K^{m_\alpha} \times K^{m_\alpha}
        \to K^{m_{2 \alpha}}
    \)). Of course,
    \(
        t_\alpha|_{\mathbb G_\add^{m_{2 \alpha}}}
        = t_{2 \alpha}
    \).

    \item The scheme centralizer \(L\) of \(T\) in \(G\) is a reductive closed subgroup of \(G\) with the Lie algebra
    \(
        \mathfrak g_0
    \). In particular, it is smooth with connected fibers. Moreover, \(L\) normalizes all \(U_\alpha\).

    \item Let
    \(
        \Pi \subseteq \Psi
    \) be the subset of positive roots. The product map
    \(
        \prod_{
            \alpha \in \Pi \setminus 2 \Pi
        } U_{-\alpha}
        \times L
        \times \prod_{
            \alpha \in \Pi \setminus 2 \Pi
        } U_\alpha
        \to G
    \) is an open embedding. The group schemes
    \(
        U^{\pm} = \prod_{
            \alpha \in \Pi \setminus 2 \Pi
        } U_{\pm \alpha}
    \) are the unipotent radicals of opposite parabolic subgroups
    \(
        P^{\pm} = L U^{\pm}
    \) and \(L\) is the common Levi subgroup of \(P^{\pm}\). Moreover,
    \(
        P^{\pm} = P_G(\pm \lambda)
    \) for any constant coweight \(\lambda\) such that
    \(
        \langle \alpha, \lambda \rangle > 0
    \) for all
    \(
        \alpha \in \Delta
    \), see \cite[theorem 4.1.7 and example 5.2.2]{conrad}.

\end{itemize}

We denote the domain of \(t_\alpha\) by \(P_\alpha\), i.e.
\(
    P_\alpha = \mathbb G_\add^{m_\alpha}
\) for
\(
    \alpha \in \Psi \setminus \frac 1 2 \Psi
\) and
\(
    P_\alpha
    = \mathbb G_\add^{m_{2 \alpha}}
    \dotoplus \mathbb G_\add^{m_\alpha}
\) for
\(
    \alpha \in \Psi \cap \frac 1 2 \Psi
\).

The Chevalley commutator formula is
\[
    [t_\alpha(x), t_\beta(y)]
    = \prod_{\substack{
        i \alpha + j \beta \in \Psi \\ i, j > 0
    } } t_{i \alpha + j \beta}(
        f_{\alpha \beta i j}(x, y)
    )
\]
for some uniquely determined scheme morphisms
\(
    f_{\alpha \beta i j}
\) if \(\alpha\) and \(\beta\) are linearly independent. Here we fix some order in the product and assume that
\(
    f_{\alpha, \beta, 2i, 2j} = 0
\) if both
\(
    i \alpha + j \beta
\) and
\(
    2i \alpha + 2j \beta
\) lie in \(\Psi\). The morphisms
\(
    f_{\alpha \beta i j}
\) are \(L\)-equivariant. Moreover,
\begin{itemize}

    \item If all \(\alpha\), \(\beta\),
    \(
        i \alpha + j \beta
    \) are not ultrashort (i.e. roots of a component of type
    \(
        \mathsf{BC}_\ell
    \) with the smallest length), then the expression
    \(
        f_{\alpha \beta i j}(x, y)
    \) is polynomial, homogeneous on \(x\) of degree \(i\) and on \(y\) of degree \(j\).

    \item If \(\alpha\) and \(\beta\) are ultrashort in a common irreducible component of \(\Psi\), then
    \(
        f_{\alpha \beta 1 1}
    \) factors through a bilinear morphism
    \(
        P_\alpha / P_{2 \alpha}
        \times P_\beta / P_{2 \beta}
        \to P_{\alpha + \beta}
    \).

    \item Suppose that \(\alpha\) is short, \(\beta\) is ultrashort, and
    \(
        (\alpha, \beta)
    \) is a base of a root subsystem of type
    \(
        \mathsf{BC}_2
    \). Then
    \(
        f_{\alpha \beta 1 2}(x, y)
    \) is \(K\)-linear on \(x\) and \(K\)-quadratic on \(y\) (with respect to the largest nilpotent filtration on
    \(
        P_{\alpha + 2 \beta}
    \)),
    \(
        f_{\alpha \beta 1 1}(x, y)
    \) is \(K\)-quadratic on \(x\) (with respect to the smallest nilpotent filtration on \(P_\beta\)) and a homomorphism on \(y\).

\end{itemize}

The last claim easily follows from the identities
\(
    [xy, z] = \up{x}{[y, z]}\, [x, z]
\) and
\(
    [x, yz] = [x, y]\, \up{y}{[x, z]}
\). For every root \(\alpha\) the canonical morphism
\(
    P_\alpha \times \mathbb A^1 \to P_\alpha
\) is denoted by
\(
    ({-}) \cdot ({=})
\) (it is the usual module structure in the case
\(
    \alpha \notin \frac 1 2 \Psi
\)), so always
\(
    f_{\alpha \beta i j}(x \cdot k, y \cdot k')
    = f_{\alpha \beta i j}(x, y) \cdot k^i {k'}^j
\).

\begin{lemma} \label{loc-iso-pin}
    If \(K\) is semilocal with connected spectrum, then every parabolic subgroup contains a minimal one and every isotropic pinning is contained in a maximal one. If
    \(
        (T, \Psi)
    \) and
    \(
        (T', \Psi')
    \) are two maximal isotropic pinnings, then there is
    \(
        g
        \in U_{T, \Psi}^+(K)\,
        U_{T, \Psi}^-(K)\,
        U_{T, \Psi}^+(K)
    \) such that
    \(
        T' = \up gT
    \) and
    \(
        \Psi' = \up g{\Psi}
    \). Here
    \(
        U^{\pm}_{T, \Psi}
    \) are the unipotent radicals of the canonical parabolic subgroups associated with
    \(
        (T, \Psi)
    \). Every minimal parabolic subgroup may be constructed by a maximal isotropic pinning.
\end{lemma}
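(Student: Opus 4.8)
The plan is to dispose of the existence statements first and then concentrate on the conjugacy, which is the crux. For existence, observe that the rank of a split subtorus of \(G\) is bounded by the absolute rank, and since \(\Spec(K)\) is connected the maximal attainable rank is constant on \(\Spec(K)\); so I choose a split subtorus \(T \leq G\) of maximal rank. Its centralizer \(L\) is reductive, as recalled above, and the relative roots of \(T\) form a root system \(\Psi\), giving an isotropic pinning \((T, \Psi)\). It is maximal, because any strictly larger pinning would contain a split subtorus strictly larger than \(T\). An arbitrary isotropic pinning \((S, \Theta)\) is enlarged to a maximal one by embedding \(S\) into a maximal split subtorus and restricting the resulting relative root system along the corresponding factor-morphism. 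For a parabolic \(P\) with Levi \(M\) and unipotent radical \(U_P\), the assignment \(R \mapsto R\,U_P\) is a bijection between parabolic subgroups of \(M\) and parabolic subgroups of \(G\) contained in \(P\), preserving minimality; applying existence of a minimal parabolic to the reductive group \(M\) (of smaller dimension, so by induction, the base case \(P = G\) being the construction just given) produces a minimal parabolic of \(G\) inside \(P\).

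For the final assertion, let \(Q\) be a minimal parabolic with Levi \(M\). Minimality forces \(M\) to be anisotropic modulo its radical, so its unique maximal split subtorus is the split part \(T\) of the radical of \(M\); then \(L = \Cent_G(T) = M\), the relative roots of \(T\) form a root system \(\Psi\) with \(Q = P^+_{T, \Psi}\), and \((T, \Psi)\) is a maximal isotropic pinning because \(T\) is a maximal split subtorus of \(G\). Thus every minimal parabolic arises from a maximal isotropic pinning.

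The conjugacy is the heart of the matter. The key reduction is that \(L = \Cent_G(T)\) both fixes \(T\) under conjugation and preserves each \(U_\alpha\), hence fixes the whole pinning \((T, \Psi)\); therefore it suffices to exhibit a conjugator inside \(U^+_{T,\Psi}(K)\, U^-_{T,\Psi}(K)\, U^+_{T,\Psi}(K)\), since any factor lying in \(L(K)\) may be discarded without changing \(\up gT\) or \(\up g\Psi\). Equivalently, I must show that some element of \(U^+(K)\,U^-(K)\,U^+(K)\) carries \(T\) to the given maximal split subtorus \(T'\), i.e.\ that the orbit map \((a, b, c) \mapsto \up{abc}T\) on \(U^+ \times U^- \times U^+\) hits \(T'\). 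Over a field or a geometric point this is the classical conjugacy of maximal split tori \cite[Exp. XXVI]{sga3}: composing with \(U^- \times U^+ \to G\) already reaches the big cell \(U^- L U^+ = (U^- U^+) L\), whose orbit of \(T\) is open and dense, while the remaining maximal split tori are reached using Weyl representatives, which lie in \(U^+ U^- U^+\) by the rank-one computation \(\dot w_\alpha \in U_\alpha U_{-\alpha} U_\alpha\) combined with the Chevalley commutator formula.

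It remains to upgrade this to \(K\)-points over the semilocal connected ring \(K\), and this is the main obstacle. The field-level solution expresses the required conjugator by solving a triangular system in the coordinates of \(a, b, c\) whose solvability hinges on certain coordinates being invertible, as is already visible for \(\glin_2\), where one must divide by an off-diagonal entry. Because \(K\) has only finitely many maximal ideals, prime avoidance lets me choose these coordinates to be units simultaneously modulo every maximal ideal, so the system becomes solvable over \(K\) itself; an induction on the relative rank, peeling off one root subgroup \(U_\alpha\) at a time via the commutator formula, reduces the general case to this rank-one situation. Finally, conjugacy of the associated minimal parabolics and the refinement of an arbitrary pinning to a maximal one follow formally from what has been proved, completing the argument.
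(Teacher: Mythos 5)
The paper does not argue this lemma at all: its ``proof'' is a citation of \cite[Exp.~XXVI, corollary 5.2, corollary 5.7, proposition 6.16, theorem 7.13]{sga3}, so your proposal is in effect an attempt to reprove those structure theorems from scratch. That would be legitimate, but two of its steps have genuine gaps. The first is in the existence part: you take a split subtorus \(T\) of maximal rank and assert that its weights on \(\mathfrak g\) ``form a root system \(\Psi\), giving an isotropic pinning \((T,\Psi)\)''. With the paper's definition this assertion is precisely what must be proved: one needs an fppf extension \(K \subseteq K'\) and a genuine pinning of \(G_{K'}\) whose maximal torus contains \(T_{K'}\), with the inclusion induced by a \emph{constant} homomorphism of weight lattices whose associated map \(\Phi \cup \{0\} \to \Psi \cup \{0\}\) is a factor-morphism. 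Over a semilocal connected base this constancy of the relative root datum is the hard content of the cited results (notably theorem 7.13); it is not a formal consequence of the connectedness of \(\Spec(K)\), and nothing in your sketch establishes it. The same gap reappears in your last paragraph, where you again invoke ``the relative roots of \(T\) form a root system \(\Psi\) with \(Q = P^+_{T,\Psi}\)'' and also assert \(\Cent_G(T) = M\) without proof.

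The second and more serious gap is the conjugacy step, which you rightly call the crux but do not actually prove. The proposed mechanism --- write the conjugator by ``solving a triangular system'' whose solvability ``hinges on certain coordinates being invertible'', then use prime avoidance over the finitely many maximal ideals --- does not make sense as stated: the conjugators produced over the various residue fields \(K/\mathfrak m_i\) are unrelated elements of unrelated groups, so there is no single global datum to which prime avoidance could be applied, and no such system is ever exhibited; the ``induction peeling off one root subgroup at a time'' is likewise never formulated. Even the field-level part of your sketch is flawed: \(U^+U^-U^+\) is not closed under multiplication, so the rank-one fact \(\dot w_\alpha \in U_\alpha U_{-\alpha} U_\alpha\) does not place a general Weyl representative, let alone a general conjugator, in \(U^+U^-U^+\). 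The argument that actually works (and is the one behind the SGA3 proof) is torsor-theoretic: (a) the parabolics opposite to a fixed parabolic \(Q\) form a trivial torsor under \(\uradical(Q)(K)\) acting by conjugation; (b) over a semilocal ring there exists a parabolic \(Q\) opposite simultaneously to \(P^+_{T,\Psi}\) and \(P^+_{T',\Psi'}\), the delicate point being the residue fields, including the finite ones; (c) two Levi subgroups of a parabolic are conjugate under a unique element of its unipotent radical, and a Levi determines its split torus as the unique maximal split subtorus of its center. Chaining (a)--(c) is what produces an element of \(U^+_{T,\Psi}(K)\,U^-_{T,\Psi}(K)\,U^+_{T,\Psi}(K)\) carrying \((T,\Psi)\) to \((T',\Psi')\); none of these ingredients, nor any workable substitute, appears in your proposal.
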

\begin{proof}
    See \cite[Exp. XXVI, corollary 5.2, corollary 5.7, proposition 6.16, theorem 7.13]{sga3}.
\end{proof}

If \(K\) is semilocal with connected spectrum, then the \textit{isotropic rank} of \(G\) is the common rank of its maximal isotropic pinnings. The \textit{local isotropic rank} of \(G\) over any unital ring \(K\) is the minimum of the isotropic ranks of
\(
    G_{\mathfrak m}
\) for all maximal ideals
\(
    \mathfrak m \leqt K
\).

\section{Ind-pro-completion}

Let \(\mathbf C\) be a category. Its \textit{ind-completion}
\(
    \Ind(\mathbf C)
\) is the universal category with a functor from \(\mathbf C\) containing all direct limits indexed by small filtered categories (e.g. by directed sets, especially \(\mathbb N\)). We use the following standard construction of
\(
    \Ind(\mathbf C)
\): its objects are \textit{direct systems} in \(\mathbf C\), i.e. covariant functors
\(
    X \colon \mathbf I_X \to \mathbf C
\) for small filtered \textit{index categories}
\(
    \mathbf I_X
\). Morphisms are given by the formula
\[
    \Ind(\mathbf C)(X, Y)
    = \varprojlim\nolimits_{i \in \mathbf I_X}^\Set
        \varinjlim\nolimits_{j \in \mathbf I_Y}^\Set
            \mathbf C(X(i), Y(j)).
\]
For the definition of composition see \cite[definition 6.1.1]{kashiwara-schapira}.

Dually, the \textit{pro-completion} of \(\mathbf C\) is
\(
    \Pro(\mathbf C) = \Ind(\mathbf C^\op)^\op
\), it is the universal category with a functor from \(\mathbf C\) containing all projective limits indexed by small filtered categories. Objects of
\(
    \Pro(\mathbf C)
\) are \textit{inverse systems} in \(\mathbf C\), i.e. contravatiant functors
\(
    X \colon \mathbf I_X \to \mathbf C
\) for small filtered index categories \(\mathbf I_X\). Morphisms are given by
\[
    \Pro(\mathbf C)(X, Y)
    = \varprojlim\nolimits_{j \in \mathbf I_Y}^\Set
        \varinjlim\nolimits_{i \in \mathbf I_X}^\Set
            \mathbf C(X(i), Y(j)).
\]

If
\(
    X \colon \mathbf I_X \to \mathbf C
\) is a direct system and
\(
    u \colon \mathbf I'_X \to \mathbf I_X
\) is a cofinal, then the induced morphism
\(
    X \circ u \to X
\) is an isomorphism in
\(
    \Ind(\mathbf C)
\). Dually, if \(X\) is an inverse system, then the induced morphism
\(
    X \to X \circ u
\) is an isomorphism in
\(
    \Pro(\mathbf C)
\). A \textit{level morphism} in
\(
    \Ind(\mathbf C)
\) or
\(
    \Pro(\mathbf C)
\) is an object of the ind-completion or the pro-completion of the category of arrows in \(\mathbf C\). Every morphism in
\(
    \Ind(\mathbf C)
\) or
\(
    \Pro(\mathbf C)
\) is isomorphic to a level one in the category of arrows \cite[proposition 6.1.14]{kashiwara-schapira}.

There are canonical fully faithful functors
\(
    \mathbf C \to \Ind(\mathbf C)
\) and
\(
    \mathbf C \to \Pro(\mathbf C)
\), they map
\(
    X \in \mathbf C
\) to the corresponding functor indexed by the terminal category with unique morphism. If \(\mathbf C\) is finitely complete or cocomplete, then
\(
    \Ind(\mathbf C)
\) and
\(
    \Pro(\mathbf C)
\) are also finitely complete or cocomplete respectively by \cite[corollary 6.1.17(i) and proposition 6.1.18(iii)]{kashiwara-schapira}. If \(\mathbf C\) is finitely complete, then in
\(
    \Ind(\mathbf C)
\) direct limits commute with finite limits. A limit and a colimit of a finite level diagram may be calculated levelwise.

If
\(
    F \colon \mathbf C \to \mathbf D
\) is a functor, then it continues to unique (up to unique natural isomorphism) functors
\(
    \Ind(F) \colon \Ind(\mathbf C) \to \Ind(\mathbf D)
\) and
\(
    \Pro(F) \colon \Pro(\mathbf C) \to \Pro(\mathbf D)
\) commuting with direct limits and projective limits respectively \cite[proposition 6.1.9, 6.1.10]{kashiwara-schapira}. If \(F\) is fully faithful, then
\(
    \Ind(F)
\) and
\(
    \Pro(F)
\) are also fully faithful.

Recall that a \textit{regular epimorphism} in a category \(\mathbf C\) is a coequalizer of a couple of morphisms. A \textit{kernel pair} of a morphism
\(
    f \colon X \to Y
\) is the limit
\(
    \lim(X \xrightarrow f Y \xleftarrow f X)
    \rightrightarrows X
\). A category \(\mathbf C\) is called \textit{regular} if it is finitely complete, all kernel pairs have coequalizers, and the class of regular epimorphisms is closed under base change. If \(\mathbf C\) is regular, then any morphism
\(
    f \colon X \to Y
\) has unique (up to unique isomorphism) \textit{image decomposition}
\(
    X \xrightarrow u \Image(f) \xrightarrow v Y
\), where \(u\) is a regular epimorphism and \(v\) is a monomorphism. We usually denote the image
\(
    \Image(f)
\) just by \(f(X)\). The image decomposition is functorial on \(f\).

The pro-completion and the ind-completion of a regular category are regular by \cite[example 1.11 and \S 1.8]{jacqmin-janelidze}. Moreover, image decompositions of level morphisms may be computed levelwise. In particular, if the components of a level morphism \(f\) are monomorphisms or regular epimorphisms, then \(f\) is itself a monomorphism or a regular epimorphism respectively. A level morphism \(f \in \Pro(\mathbf C)(X, Y)\) in the pro-completion of a regular category is a regular epimorphism if and only if for all \(i \in \mathbf I_X\) there are \(j \in \mathbf I_X\) and \(\varphi \in \mathbf I_X(i, j)\) such that \(\Image(Y(\varphi)) \subseteq \Image(f(i))\) as subobjects of \(Y(i)\). This easily follows from \cite[proposition 1.7]{dydak-portal} applied to \(\Image(f) \subseteq Y\).

A regular category \(\mathbf C\) is called \textit{coherent} if the families of subobjects
\(
    \Sub(X)
\) are bounded \(\vee\)-semilattices for all
\(
    X \in \mathbf C
\) and for all
\(
    f \in \mathbf C(X, Y)
\) the base change maps
\(
    f^* \colon \Sub(Y) \to \Sub(X)
\) are homomorphisms of bounded \(\vee\)-semilattices. The category \(\Set\) is coherent. We denote the least upper bound of subobjects
\(
    A, B \subseteq X
\) by
\(
    A \cup B
\) and the smallest subobject by
\(
    \varnothing \subseteq X
\). By \cite[the list after example 1.12 and \S 1.8]{jacqmin-janelidze} if \(\mathbf C\) is coherent and finitely cocomplete, then
\(
    \Ind(\mathbf C)
\) and
\(
    \Pro(\mathbf C)
\) also have this property. 

It follows that
\(
    \Ind(\Pro(\Set))
\) is a finitely cocomplete coherent category. 
The categories \(\Set\),
\(
    \Ind(\Set)
\),
\(
    \Pro(\Set)
\) are its full subcategories and the embedding functors preserve finite limits, finite colimits, monomorphisms, regular epimorphisms, the image decomposition, and finite unions of subobjects.

In any finitely complete category \(\mathbf C\) we often write compositions of morphisms by first-order terms, e.g. the identities
\(
    (xy)z = x(yz)
\),
\(
    x 1 = x = 1 x
\),
\(
    x x^{-1} = 1 = x^{-1} x
\) for
\(
    ({-}) ({=}) \colon X \times X \to X
\),
\(
    ({-})^{-1} \colon X \to X
\),
\(
    1 \colon \{*\} \to X
\) mean that \(X\) is a group object (here \(\{*\}\) denotes the terminal object in \(\mathbf C\)). We write
\(
    x \in X
\) if a formal variable \(x\) has the domain
\(
    X \in \mathbf C
\). An \textit{action} of a group object \(G\) on a group object \(H\) is a morphism
\(
    \up{({-})}{({=})} \colon G \times H \to H
\) such that
\(
    \up{gg'}{h} = \up g{(\up{g'}h)}
\),
\(
    \up 1h = h
\), and
\(
    \up{g}{(hh')} = \up g h\, \up{g}{h'}
\). In this case the \textit{semidirect product}
\(
    H \rtimes G
\) is their usual product
\(
    H \times G
\) in \(\mathbf C\) with the group object structure given by
\(
    (h \rtimes g) (h' \rtimes g')
    = h\, \up{g}{h'} \rtimes gg'
\). A \textit{crossed module} is a homomorphism
\(
    \delta \colon X \to G
\) of group objects together with an action of \(G\) on \(X\) such that \(\delta\) is \(G\)-equivariant and
\(
    \up xy = \up{\delta(x)}y
\) for
\(
    x, y \in X
\). By the Yoneda lemma elementary properties of such algebraic objects follow from the abstract group theory.

\begin{lemma} \label{subgr-gen}
    Let \(\mathbf C\) be a finitely cocomplete coherent category, \(G\) be a group object in
    \(
        \Ind(\mathbf C)
    \), and
    \(
        X \subseteq G
    \) be a subobject of \(G\). Then
    \(
        \langle X \rangle
        = \varinjlim_{n \in \mathbb N}
            (X \cup \{1\} \cup X^{-1})^n
    \) is the smallest group subobject of \(G\) containing \(X\), where \(X^n\) is the image of the multiplication morphism
    \(
        X^{\times n} \to G
    \), \(X^{-1}\) is the image of \(X\) under the inversion, and \(\{1\}\) is the image of the identity
    \(
        1 \colon \{*\} \to G
    \).
\end{lemma}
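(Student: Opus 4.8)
Write $Y=X\cup\{1\}\cup X^{-1}$, so that $\langle X\rangle=\varinjlim_{n}Y^{n}$ is the colimit of a chain $Y^{1}\subseteq Y^{2}\subseteq\cdots$. The plan is to check, in order, that this colimit is genuinely a subobject of $G$, that it inherits the group structure, and that it lies inside every group subobject containing $X$. Since $\Ind(\mathbf C)$ is coherent (hence regular and finitely complete) and in it filtered colimits commute with finite limits, the whole calculus of images, finite unions, and intersections of subobjects is available and interacts well with the colimit over $n$; by the Yoneda-style remark preceding Lemma~\ref{subgr-gen} the group-theoretic manipulations may be carried out formally.

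The computational backbone is the identity $Y^{m}\cdot Y^{n}=Y^{m+n}$, meaning that the image of the multiplication $Y^{m}\times Y^{n}\to G$ equals $Y^{m+n}$. To see it I would factor that multiplication through the regular epimorphism $Y^{\times m}\times Y^{\times n}\twoheadrightarrow Y^{m}\times Y^{n}$, a product of the two defining regular epimorphisms and hence again a regular epimorphism, since regular epimorphisms in a regular category are stable under base change and closed under composition. Up to the associativity isomorphism the composite is the $(m{+}n)$-fold multiplication $Y^{\times(m+n)}\to G$, whose image is $Y^{m+n}$ by definition, and precomposition with a regular epimorphism leaves the image unchanged. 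Taking $n=1$ and using $\{1\}\subseteq Y$ gives $Y^{m}=Y^{m}\cdot\{1\}\subseteq Y^{m}\cdot Y=Y^{m+1}$, which justifies the chain, while $Y^{-1}=Y$ (inversion is an automorphism of $G$ swapping $X$ and $X^{-1}$ and fixing $\{1\}$) gives $(Y^{n})^{-1}=Y^{n}$.

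Next I would verify that $\langle X\rangle\to G$ is a monomorphism, so that $\langle X\rangle$ really is a subobject, by showing the diagonal $\langle X\rangle\to\langle X\rangle\times_{G}\langle X\rangle$ is an isomorphism. As finite limits commute with the filtered colimit, $\langle X\rangle\times_{G}\langle X\rangle=\varinjlim_{m,n}(Y^{m}\times_{G}Y^{n})$, and for subobjects of $G$ the fibre product is the intersection $Y^{m}\cap Y^{n}=Y^{\min(m,n)}$; since the diagonal is cofinal in $\mathbb N\times\mathbb N$ this colimit is again $\varinjlim_{k}Y^{k}=\langle X\rangle$, with the diagonal becoming the identity. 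Granting this, $\langle X\rangle$ contains $X$ (as $X\subseteq Y=Y^{1}$) and the unit, is closed under inversion by $(Y^n)^{-1}=Y^n$, and — writing $\langle X\rangle\times\langle X\rangle=\varinjlim_{m,n}(Y^{m}\times Y^{n})$ and noting each $Y^{m}\times Y^{n}\to G$ factors through $Y^{m+n}\subseteq\langle X\rangle$ — is closed under multiplication. Hence it is a group subobject containing $X$.

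For minimality, let $H\subseteq G$ be any group subobject with $X\subseteq H$. Then $\{1\}\subseteq H$ and $X^{-1}\subseteq H$, so $Y\subseteq H$ because the union is the least upper bound in $\Sub(G)$. By induction $Y^{n}\subseteq H$: the multiplication $Y^{\times n}\to G$ factors as $Y^{\times n}\to H^{\times n}\to H\hookrightarrow G$ using $Y\subseteq H$ and the closure of $H$ under multiplication, so its image $Y^{n}$ lies in $H$. Since each $Y^{n}\to G$ then factors compatibly through the monomorphism $H\hookrightarrow G$, so does the colimit $\langle X\rangle\to G$, whence $\langle X\rangle\subseteq H$. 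The one genuinely non-formal step — the only place where the special nature of $\Ind(\mathbf C)$ is used rather than pure group-theoretic bookkeeping — is the verification that $\varinjlim_{n}Y^{n}$ is a subobject of $G$, and this is exactly what the commutation of the filtered colimit with finite limits delivers.
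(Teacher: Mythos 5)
Your proof is correct and follows essentially the same route as the paper's: the paper's own two-sentence argument consists of exactly your two main points, namely that any group subobject $H \supseteq X$ contains every $(X \cup \{1\} \cup X^{-1})^n$ and hence the colimit, and that the group operations of $G$ restrict to $\langle X \rangle$ because direct limits commute with finite products (limits) in $\Ind(\mathbf C)$. You simply make explicit the bookkeeping the paper leaves implicit — the image identity $Y^m \cdot Y^n = Y^{m+n}$ via stability of regular epimorphisms, and the check that the colimit is a genuine subobject — which is a faithful elaboration rather than a different proof.
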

\begin{proof}
    Let
    \(
        H \leq G
    \) be a group subobject containing \(X\). Clearly,
    \(
        (X \cup \{1\} \cup X^{-1})^n \subseteq H
    \) for any
    \(
        n \in \mathbb N
    \), so
    \(
        \langle X \rangle \leq H
    \). On the other hand, the group operations of \(G\) induce group operations on
    \(
        \langle X \rangle
    \) since direct limits commute with finite products in
    \(
        \Ind(\mathbf C)
    \).
\end{proof}

Non-unital ring objects in a finitely complete category are defined in the same way. If \(A\) and \(B\) are ring objects in \(\mathbf C\), then an \textit{action} of \(A\) on \(B\) is given by a biadditive morphism
\(
    ({-}) ({=}) \colon A \times B \to B
\) such that
\(
    a(bb') = (ab)b'
\) and
\(
    (aa')b = a(a'b)
\). A \textit{unital action} of a unital \(A\) on \(B\) satisfies also the additional axiom
\(
    1 b = b
\). The term \textit{\(A\)-algebra} always means a ring object with a unital action of \(A\). If \(A\) and \(B\) are \(R\)-algebras for some unital ring object \(R\), then we also require that
\(
    (ra) b = r(ab) = a(rb)
\) for
\(
    r \in R
\),
\(
    a \in A
\),
\(
    b \in B
\) for an action of \(A\) on \(B\).

If \(A\) acts on \(B\), then
\(
    A \rtimes B
\) is their \textit{semi-direct product}, it is their product as abelian group objects together with the multiplication
\(
    (a \rtimes b) (a' \rtimes b')
    = (aa' + ab' + a'b) \rtimes bb'
\). A \textit{crossed module} of ring objects is a homomorphism
\(
    \delta \colon X \to A
\) together with an action of \(A\) on \(X\) such that
\(
    \delta(ax) = a \delta(x)
\) and
\(
    xy = \delta(x) y
\) for
\(
    x, y \in X
\) and
\(
    a \in A
\).

\section{Ind-pro-algebras}

Recall that \(K\) denotes the base unital ring. For any
\(
    s, t \in K
\) and
\(
    A \in \Rng_K
\) consider the
\(
    (A \rtimes K)
\)-module
\[
    \tfrac 1 s A^{(t)}
    = \{\tfrac{a^{(t)}}s \mid a \in A\}
\]
with the operations
\(
    \frac{a^{(t)}}s + \frac{b^{(t)}}s
    = \frac{(a + b)^{(t)}}s
\) and \(
    \frac{a^{(t)}}s (b \rtimes k)
    = \frac{(ab + ak)^{(t)}}s
\). There are natural homomorphisms
\[
    \tfrac 1 s A^{(tt')}
    \to \tfrac 1{ss'} A^{(t)},\,
    \tfrac{a^{(tt')}}s
    \mapsto \tfrac{(at's')^{(t)}}{ss'}
\]
and bilinear multiplication maps
\[
    ({-}) ({=})
    \colon \tfrac 1s A^{(t)}
    \times \tfrac 1{s'} A^{(t')}
    \to \tfrac 1{ss'} A^{(tt')},\,
    (\tfrac{a^{(t)}}s, \tfrac{b^{(t')}}{s'})
    \mapsto \tfrac{(ab)^{(tt')}}{ss'}.
\]
We omit the upper index \((1)\) and the denominator \(1\) since
\(
    \frac 1 1 A^{(1)} \cong a,\,
    \frac{a^{(1)}}{1} \mapsto a
\) is an algebra isomorphism. Let the \textit{formal localization} of \(A\) at \(s\) be
\[
    A_s^{\Ind} = \varinjlim\nolimits^{\Ind(\Set)} (
        A
        \to \tfrac 1 s A
        \to \tfrac{1}{s^2} A
        \to \ldots
    ),
\]
so \(A_s\) is the ordinary direct limit of the direct system
\(
    A_s^{\Ind}
\). The object
\(
    A_s^{\Ind}
\) is a non-unital algebra over
\(
    A \rtimes K
\) in
\(
    \Ind(\Set)
\) (it is unital if \(A\) is unital),
\(
    A_s^{\Ind} \to A_{ss'}^{\Ind}
\) are homomorphisms of algebras. Similarly, the \textit{colocalization} of \(A\) at \(s\) is
\[
    A^{(s^\infty)}
    = \varprojlim\nolimits^{\Pro(\Set)} (
        \ldots \to A^{(s^2)} \to A^{(s)} \to A
    ),
\]
it is a non-unital algebra over
\(
    A \rtimes K
\) in
\(
    \Pro(\Set)
\),
\(
    A^{((ss')^\infty)} \to A^{(s^\infty)}
\) are algebra homomorphisms. Actually,
\(
    A^{(s^\infty)}
\) is a pro-\(
    (A \rtimes K)
\)-algebra since
\(
    A^{(s)}
\) are algebras with respect to
\(
    a^{(s)} b^{(s)} = (abs)^{(s)}
\) called \textit{homotopes} of \(A\).

Now we construct the categories used in the construction of elementary groups. For any small category \(\mathbf D\) there is the category
\(
    \Cat(\mathbf D, \Set)
\) of functors
\(
    \mathbf D \to \Set
\) and natural transformations between them, it is also a finitely cocomplete coherent category (limits, colimits, the image decomposition, and finite unions of subobjects are calculated componentwise). We usually take
\(
    \mathbf D = \{*\}
\) in order to study an individual elementary group or
\(
    \mathbf D = \Ring^\fp_K
\) (the category of finitely presented unital \(K\)-algebras) to study the whole elementary group functor. Let
\[
    \IP_{\mathbf D}
    = \Ind(\Pro(\Cat(\mathbf D, \Set))).
\]
This construction is a contravariant functor on \(\mathbf D\), so for any object
\(
    D \in \mathbf D
\) there is a functor
\(
    \IP_{\mathbf D} \to \mathbf{IP}_{\{*\}}
\) of \textit{restriction to \(D\)}. On the other side, if \(\mathbf D\) is non-emtpy then there is a full subcategory of \textit{\(\mathbf D\)-constant objects}
\(
    \IP_{\{*\}} \subseteq \IP_{\mathbf D}
\) induced by
\(
    \mathbf D \to \{*\}
\). Let
\(
    \IPRng_{\mathbf D, s}
\) be the category of non-unital algebras in
\(
    \IP_{\mathbf D}
\) over the \(\mathbf D\)-constant unital ring object
\(
    K_s^{\Ind}
\), so
\(
    \IPRng_{\{*\}, s}
\) is the usual category of non-unital algebras over
\(
    K_s^{\Ind}
\) in
\(
    \Ind(\Pro(\Set))
\). Note that the set of global elements of
\(
    K_s^{\Ind}
\) (i.e.
\(
    \IP_{\mathbf D}(\{*\}, K_s^{\Ind})
\)) is canonically isomorphic to \(K_s\) if \(\mathbf D\) is connected.

Actually, we would like to take
\(
    \mathbf D = \Ring_K
\) to consider elementary groups for all unital \(K\)-algebras simultaneously. Unfortunately, the ``category''
\(
    \Cat(\Ring_K, \Set)
\) is too large (its objects are proper classes, not sets). Instead we may take the category of functors
\(
    \Ring_K \to \Set
\) preserving direct limits and all natural transformations, it is equivalent to
\(
    \Cat(\Ring_K^\fp, \Set)
\) by \cite[corollary 6.3.2]{kashiwara-schapira} since \(\Ring_K\) is an ind-completion of
\(
    \Ring_K^\fp
\) \cite[corollary 6.3.5]{kashiwara-schapira}. Hence for any
\(
    R \in \Ring_K
\) there is a functor
\(
    \IP_{\Ring^\fp_K} \to \IP_{\{*\}}
\) of \textit{restriction to \(R\)}. We may also work only with small categories by considering sets with bounded rank (sufficiently large for our further constructions) instead of the whole \(\Set\) and by taking only countable ind- and pro-completions.

For example, let \(\mathcal R\) be the inclusion functor
\(
    \Ring_K \to \Set
\) and
\(
    s \in K
\). Applying the colocalization construction to every component of \(\mathcal R\) (i.e. for every unital \(K\)-algebra) we obtain an object
\(
    \mathcal R^{(s^\infty)}
    \in \IPRng_{\Ring^\fp_K, s}
\). The action of
\(
    K^{\Ind}_s
\) on
\(
    \mathcal R^{(s^\infty)}
\) is given by the maps
\(
    ({-}) ({=})
    \colon \mathcal R^{(s^{n + m})}
    \times \frac 1{s^n} K
    \to \mathcal R^{(s^m)},\,
    (r^{(s^{n + m})}, \frac 1{s^n} k)
    \mapsto (rk)^{(s^m)}
\). We may consider this object as a functor
\(
    \Ring_K \to \IPRng_{\{*\}, s}
\), where the pro-structure (and the action of
\(
    K^{\Ind}_s
\)) are ``uniform'' on the algebra \(R\). The value of this functor on \(R\) (i.e. the restriction of
\(
    \mathcal R^{(s^\infty)}
\) to \(R\)) is precisely
\(
    R^{(s^\infty)}
\). It is easy to see that if
\(
    \mathcal D(s) = \mathcal D(s')
\) (i.e. \(s\) and \(s'\) divide some powers of each other), then
\(
    \mathcal R^{(s^\infty)}
    \cong \mathcal R^{({s'}^\infty)}
\) as ring objects in
\(
    \IP_{\Ring^\fp_K}
\). Instead of \(\mathcal R\) we may take any functor
\(
    \Ring_K \to \Rng_K
\) commuting with direct limits such as
\(
    \mathcal R^{(t)}
\) or
\(
    t\mathcal R
\).

Note that the composition
\(
    \mathcal R^{(s^\infty)}
    \to \mathcal R
    \to \mathcal R_s
\) is
\(
    K_s^{\Ind}
\)-equivariant. The following lemma shows that in the Noetherian case we may consider ideals instead of homotopes and such composition is a monomorphism for every individual algebra \(R\). But these properties are not ``uniform'' on the algebra, i.e. do not hold for the whole \(\mathcal R\).
\begin{lemma} \label{noeth-coloc}
    Suppose that \(K\) is Noetherian,
    \(
        A \in \Rng_K^\fp
    \) (i.e.
    \(
        A \rtimes K
    \) is a finitely generated unital \(K\)-algebra), and
    \(
        s \in K
    \). Then the natural level morphism
    \[
        A^{(s^\infty)}
        \to \varprojlim\nolimits^{\Pro(\Set)}_n
            s^n A,\,
        a^{(s^n)} \mapsto s^n a
    \]
    is an isomorphism of pro-sets. Moreover, the morphism
    \[
        \varprojlim\nolimits^{\Pro(\Set)}_n s^n A
        \to A_s,\,
        s^n a \mapsto \textstyle \frac{s^n a}1
    \]
    is a monomorphism of pro-sets.
\end{lemma}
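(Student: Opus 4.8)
The plan is to reduce both assertions to a single Noetherian input: the stabilization of the ascending chain of annihilators of powers of \(s\). Since \(A \rtimes K\) is a finitely generated algebra over the Noetherian ring \(K\), it is Noetherian by the Hilbert basis theorem, and \(A\) is an ideal of it, hence a Noetherian \((A \rtimes K)\)-module. The subsets \(\mathfrak a_n = \{a \in A \mid s^n a = 0\}\) are \((A \rtimes K)\)-submodules forming an ascending chain, so there is \(N\) with \(\mathfrak a_n = \mathfrak a_N\) for all \(n \geq N\). The useful consequence is that \(s^m c = 0 \iff s^{m'} c = 0\) for any \(c \in A\) whenever \(m, m' \geq N\). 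Throughout I use, as recorded in the excerpt, that kernel pairs (being finite limits) of level morphisms are computed levelwise, that a morphism is a monomorphism exactly when the two projections of its kernel pair coincide, and the explicit formula for hom-sets in \(\Pro(\Set)\).

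For the first claim, the transition maps of \(A^{(s^\infty)}\) are multiplication by \(s\), so the component \(A^{(s^n)} \to s^n A\), \(a^{(s^n)} \mapsto s^n a\), is surjective by the very definition of \(s^n A\); hence the level morphism is a regular epimorphism. Its kernel pair is the level object \(R\) with \(R(n) = \{(a,b) \mid s^n(a-b) = 0\}\), transition maps \((a,b) \mapsto (sa,sb)\), and projections \(p_1, p_2\) sending \((a,b)\) to \(a\) and \(b\). By the description of \(\Pro(\Set)\)-morphisms it suffices, for each \(n\), to exhibit \(m \geq n\) such that \(p_1(n)\) and \(p_2(n)\) agree after precomposition with the transition \(R(m) \to R(n)\), \((a,b) \mapsto (s^{m-n}a, s^{m-n}b)\); concretely one needs \(s^m(a-b) = 0 \Rightarrow s^{m-n}(a-b) = 0\). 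Taking \(m = n + N\) makes both \(m\) and \(m-n = N\) at least \(N\), so \(\mathfrak a_m = \mathfrak a_N = \mathfrak a_{m-n}\) and the implication holds. Thus \(p_1 = p_2\), the level morphism is a monomorphism, and a morphism that is simultaneously a monomorphism and a regular epimorphism in the regular category \(\Pro(\Set)\) is an isomorphism.

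For the second claim, the component \(s^n A \to A_s\), \(x \mapsto x/1\), has the inclusions \(s^{n+1}A \hookrightarrow s^n A\) as transition maps, so its kernel pair \(R\) has \(R(n) = \{(x,y) \in s^n A \times s^n A \mid x/1 = y/1 \text{ in } A_s\}\) with inclusions as transitions, and \(x/1 = y/1\) in \(A_s\) means \(s^k(x-y) = 0\) for some \(k\). For a given \(n\) choose \(m = \max(n, N)\); writing a pair in \(R(m)\) as \(x = s^m a\), \(y = s^m b\), the localization relation gives \(s^{k+m}(a-b) = 0\), and since \(k+m \geq N\) and \(m \geq N\) the stabilization yields \(s^m(a-b) = 0\), i.e. \(x = y\) in \(A\). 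Hence the two projections agree after the transition \(R(m) \to R(n)\), so \(p_1 = p_2\) in \(\Pro(\Set)\) and the morphism is a monomorphism.

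The only genuine difficulty, and the crux of both parts, is the reindexing: each individual component fails to be injective precisely because of the annihilators \(\mathfrak a_n\), and one must turn their Noetherian stabilization into the pro-categorical monomorphism criterion by pushing the target index \(m\) sufficiently far beyond the source index \(n\). Everything else is the routine levelwise bookkeeping of kernel pairs and the general fact that \(\text{mono} + \text{regular epi} = \text{iso}\) in a regular category.
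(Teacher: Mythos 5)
Your proof is correct, and its engine is the same as the paper's: Noetherianity forces the ascending chain of annihilators of powers of \(s\) to stabilize, and that stabilization is turned into the monomorphism criterion in \(\Pro(\Set)\) by pushing the source index sufficiently far beyond the target index. The differences are organizational but real. First, the paper reduces to the unital case by replacing \(A\) with \(A \rtimes K\) (``we may assume \(A = K\)''), whereas you keep \(A\) and get the chain condition directly from the Hilbert basis theorem by viewing \(A\) as a Noetherian module over \(A \rtimes K\); this is a clean substitute for the reduction. Second, the paper proves only one monomorphism statement: since the first map is levelwise surjective, hence a regular epimorphism, it suffices to show that the \emph{composite} \(K^{(s^\infty)} \to K_s\) is a monomorphism, which it does by identifying \(\Ker(K^{(s^{n + n_*})} \to K_s) = I_{n_*}^{(s^{n + n_*})}\) and citing a criterion from Dydak--Portal; both assertions of the lemma then follow formally (regular epi plus mono gives iso, and then the second map is mono because the composite is). You instead verify the two monomorphisms separately, computing kernel pairs levelwise and checking that their projections coincide as pro-morphisms via the hom-set formula. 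Your route is more self-contained, replacing the external citation with an elementary kernel-pair computation, at the cost of running the index-shifting argument twice where the paper's composite trick runs it once.
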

\begin{proof}
    Replacing \(A\) by
    \(
        A \rtimes K
    \) we may assume that
    \(
        A = K
    \). Since the maps
    \(
        K^{(s^n)} \to s^n K
    \) are surjective, it suffices to check that
    \[
        K^{(s^\infty)} \to K_s,\,
        x^{(s^n)} \mapsto \textstyle \frac{s^n x}1
    \]
    is a monomorphism of pro-sets. Let
    \(
        I_n = \{x \in K \mid s^n k = 0\}
    \), so
    \(
        0 = I_0 \leqt I_1 \leqt \ldots \leqt K
    \) is an ascending chain of ideals. Since \(K\) is Noetherian it stabilizes at some index \(n_*\). It follows that
    \(
        \Ker(K^{(s^{n + n_*})} \to K_s)
        = I_{n_*}^{(s^{n + n_*})}
    \) maps to zero under
    \(
        K^{(s^{n + n_*})} \to K^{(s^n)}
    \) for all \(n\), so we may apply \cite[proposition 2.3]{dydak-portal}.
\end{proof}

Pro-rings
\(
    R^{(s^\infty)}
\) and the ring object
\(
    \mathcal R^{(s^\infty)}
\) are not unital. Fortunately, they satisfy a weaker condition sufficient for our purposes. A ring object \(R\) in an ind-completion of a finitely cocomplete coherent category (e.g. in
\(
    \IP_{\mathbf D}
\) or in \(\Set\)) is called \textit{idempotent} if
\(
    R = \langle xy \mid x, y \in R \rangle
\), the right hand sides denotes the additive subgroup generated by a subobject. An action of such \(R\) on a ring object \(A\) is \textit{unital} if
\(
    A = \langle xa \mid x \in R, a \in A \rangle
\). Finally, we say that \(R\) is \textit{power idempotent} if
\(
    R = \langle xy^k \mid x, y \in R \rangle
\) for every
\(
    k > 0
\).

\begin{lemma} \label{power-idem}
    For any
    \(
        s \in K
    \) the ring object
    \(
        \mathcal R^{(s^\infty)}
    \) is power idempotent. Moreover, for any functor
    \(
        \mathcal A \colon \Ring_K \to \Rng_K
    \) commuting with direct limits and with a structure of an \(\mathcal R\)-algebra (e.g.
    \(
        \mathcal A = t \mathcal R
    \) or
    \(
        \mathcal A = \mathcal R^{(t)}
    \) for
    \(
        t \in K
    \)) the action of
    \(
        \mathcal R^{(s^\infty)}
    \) on
    \(
        \mathcal A^{(s^\infty)}
    \) is unital.
\end{lemma}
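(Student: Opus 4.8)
The plan is to unwind the pro-structure of $\mathcal R^{(s^\infty)}$ and reduce both assertions to the levelwise criterion for regular epimorphisms of level morphisms in a pro-completion. First I would record the explicit shape of the inverse system: $\mathcal R^{(s^\infty)}$ has levels $\mathcal R^{(s^n)}$, its transition $\mathcal R^{(s^{n+1})} \to \mathcal R^{(s^n)}$ is $a^{(s^{n+1})} \mapsto (as)^{(s^n)}$, and its multiplication $\mathcal R^{(s^m)} \times \mathcal R^{(s^n)} \to \mathcal R^{(s^{m+n})}$ is $(a^{(s^m)}, b^{(s^n)}) \mapsto (ab)^{(s^{m+n})}$. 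Thus a product raises the level additively, whereas projecting a level-$N$ element down to level $n$ multiplies its value by $s^{N-n}$. Since all the maps in sight are natural in the underlying unital algebra, it suffices to argue componentwise and then invoke naturality for the uniform statement over $\Ring_K^\fp$ in $\Ind(\Pro(\Cat(\mathbf D, \Set)))$.

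For power idempotence I would fix $k > 0$ and look at the inclusion $\langle xy^k \rangle \hookrightarrow \mathcal R^{(s^\infty)}$ of the additively generated subobject. It is automatically a monomorphism, so it is enough to show it is a regular epimorphism, i.e. that for every level $n$ some deeper transition has image contained in the image of the product morphism at level $n$. The key idea is to exploit the unit of the \emph{unital} algebras even though $\mathcal R^{(s^\infty)}$ itself is non-unital: evaluating on a unital algebra $A$ and taking $x = a^{(s^n)}$ together with $y = 1^{(s)}$, the unit of $A$ placed at level $1$, one gets $y^k = 1^{(s^k)}$ and hence $x y^k = a^{(s^{n+k})}$, whose projection to level $n$ is $(a s^k)^{(s^n)}$. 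As $a$ ranges over $A$ these products already exhaust the image of the transition $\mathcal R^{(s^{n+k})} \to \mathcal R^{(s^n)}$, so choosing the deeper level $m = n+k$ verifies the criterion; the inclusion is then an isomorphism and $\mathcal R^{(s^\infty)}$ is power idempotent.

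The unital-action statement I would prove by the same device. The action $\mathcal R^{(s^m)} \times \mathcal A^{(s^n)} \to \mathcal A^{(s^{m+n})}$ sends $(r^{(s^m)}, a^{(s^n)})$ to $(ra)^{(s^{m+n})}$, so taking the unit $1^{(s)} \in \mathcal R^{(s^\infty)}$ at level $1$, an arbitrary $a^{(s^n)} \in \mathcal A^{(s^\infty)}$, and using that an $\mathcal R$-algebra carries a unital action ($1 \cdot a = a$), one obtains $1^{(s)} \cdot a^{(s^n)} = a^{(s^{n+1})}$, which projects to level $n$ as $(as)^{(s^n)}$. These fill the image of the transition $\mathcal A^{(s^{n+1})} \to \mathcal A^{(s^n)}$, so $\langle x a \rangle = \mathcal A^{(s^\infty)}$ follows from the regular-epimorphism criterion with $m = n+1$.

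The conceptual content is short: although $\mathcal R^{(s^\infty)}$ and $\mathcal A^{(s^\infty)}$ are non-unital, the unit of the underlying unital algebras lets one realize each power $s^k$ of the transition maps as a genuine product $x y^k$. Accordingly, the step I expect to be most delicate is not this trick but the bookkeeping around it: presenting the inclusion of the additively generated subobject as a level morphism, confirming that its image may be computed levelwise in the pro-completion, and checking that it is enough to hit the transition images with the restricted family of products coming from $y = 1^{(s)}$ rather than with all products $xy^k$. Once these points are settled, the regular-epimorphism criterion closes both cases at once.
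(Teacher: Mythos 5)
Your strategy is in essence the paper's own: both proofs unwind the additively generated subobject to the levelwise criterion for regular epimorphisms in the pro-completion, and both exploit the unit of the underlying unital algebra to realize the transition maps as products. The paper merely packages the two claims into the single statement \(\mathcal A^{(s^\infty)} = \langle x^k a \mid x \in \mathcal R^{(s^\infty)},\, a \in \mathcal A^{(s^\infty)}\rangle\), proved with a single summand (\(T = 1\)) and depth \(m = kn\).

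However, the step you yourself flag as delicate is genuinely imprecise as written, and your stated depth is an artifact of it. Taking \(x\) at level \(n\) and \(y = 1^{(s)}\) at level \(1\) amounts to replacing the source \(\mathcal R^{(s^\infty)} \times \mathcal R^{(s^\infty)}\) by the pro-object \(\varprojlim_n \bigl(\mathcal R^{(s^n)} \times \mathcal R^{(s)}\bigr)\) along the reindexing \(n \mapsto (n, 1)\), which is \emph{not} cofinal; so what you compute is a priori the image of a different morphism, not of the multiplication morphism of the pro-ring. If one computes the image legitimately, via the cofinal diagonal (i.e.\ the levelwise homotope multiplication), then on a unital algebra \(R\) the image at level \(n\) is \(\{(s^{kn} x y^k)^{(s^n)} \mid x, y \in R\}\), and this does \emph{not} contain the transition image \((s^k R)^{(s^n)}\) from level \(n + k\): for \(K = R = \mathbb Z[s]\), \(k = 1\), \(n = 2\), the element \(s^{(s^2)}\) is not of the form \((s^2 xy)^{(s^2)}\). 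It only contains the transition image from level \(n + kn\). So your claimed depth \(m = n + k\) fails, though the criterion does hold at depth \(n(k+1)\) with the same witness \(y = 1\) now placed at level \(n\), since \((1^{(s^n)})^k\, a^{(s^n)} = (s^{kn} a)^{(s^n)}\); that is exactly the paper's proof. Alternatively, your mixed-level bookkeeping can be repaired by verifying that the two level representations define equal subobjects of \(\mathcal R^{(s^\infty)}\) (mutual containment does hold after passing to deeper levels, e.g.\ \(s^{k+m-n} x y^k = s^{kn}\,(s^{k+m-n-kn}x)\,y^k\) for \(m \geq kn + n - k\)), but this check is precisely the third point you defer, and without it the argument is incomplete.
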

\begin{proof}
    Let us show that
    \(
        \mathcal A
        = \langle
            x^k a
        \mid
            x \in \mathcal R,
            a \in \mathcal A
        \rangle
    \) for every
    \(
        k \geq 0
    \). In other words, for all
    \(
        k \geq 0
    \) there is
    \(
        T \geq 0
    \) such that for all
    \(
        n \geq 0
    \) there is
    \(
        m \geq 0
    \) with the following property: for every
    \(
        R \in \Ring_K
    \) the image
    \(
        s^m \mathcal A(R)^{(s^n)}
    \) of
    \(
        \mathcal A(R)^{(s^{n + m})}
        \to \mathcal A(R)^{(s^n)}
    \) is contained in
    \[
        \sum_{t = 1}^T
            (R^{(s^n)})^{[k]}\, \mathcal A(R)^{(s^n)}
        = s^{kn} \sum_{t = 1}^T
            (R^{[k]} \mathcal A(R))^{(s^n)}
        \leq \mathcal A(R)^{(s^n)},
    \]
    where
    \(
        B^{[k]} = \{b^k \mid b \in B\}
    \) for
    \(
        B \in \Rng_K
    \). We may take
    \(
        T = 1
    \) and
    \(
        m = kn
    \).
\end{proof}

\begin{lemma} \label{cozariski}
    Let
    \(
        \mathcal A \colon \Ring_K \to \Rng_K
    \) be a functor commuting with direct limits and with a structure of an \(\mathcal R\)-algebra. Suppose that
    \(
        \mathcal D(s)
        = \bigcup_{i = 1}^N \mathcal D(s_i)
    \). Then
    \(
        \mathcal A^{(s^\infty)}
    \) is the sum of the images of
    \(
        \mathcal A^{(s_i^\infty)}
        \to \mathcal A^{(s^\infty)}
    \) for \(1 \leq i \leq N\).
\end{lemma}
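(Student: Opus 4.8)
The plan is to work in $\Pro(\Cat(\Ring^\fp_K, \Set))$, since the embedding into $\IP_{\Ring^\fp_K}$ preserves the image decomposition and finite sums of subobjects, and since images of level morphisms and finite sums of subobjects in a pro-completion are computed levelwise. First I would record the algebraic translation of the hypotheses. Writing $\mathcal A^{(s^\infty)}$ as the pro-object $(\ldots \to \mathcal A^{(s^2)} \to \mathcal A^{(s)} \to \mathcal A)$ indexed by $\mathbb N$, the transition $\mathcal A^{(s^{n+1})} \to \mathcal A^{(s^n)}$ is multiplication by $s$, so the image of the composite transition from level $m'$ to level $m$ is the subfunctor $s^{m'-m}\mathcal A \subseteq \mathcal A$. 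From $\mathcal D(s_i) \subseteq \mathcal D(s)$ I get $s_i^{k_i} = s u_i$ for some $k_i \geq 1$ and $u_i \in K$. The next step is to identify the canonical map $\iota_i \colon \mathcal A^{(s_i^\infty)} \to \mathcal A^{(s^\infty)}$ (the composite $\mathcal A^{(s_i^\infty)} \cong \mathcal A^{((ss_i)^\infty)} \to \mathcal A^{(s^\infty)}$ coming from $\mathcal D(s_i) = \mathcal D(ss_i) \subseteq \mathcal D(s)$) with an explicit level morphism: after the cofinal reindexing $m \mapsto mk_i$ of the source, its level-$m$ component is the homotope homomorphism $\mathcal A^{(s_i^{mk_i})} \to \mathcal A^{(s^m)}$, $a \mapsto u_i^m a$. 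One checks that this respects the homotope multiplication (here $u_i^m s_i^{mk_i} = (u_i^m)^2 s^m$) and that the squares with the transition maps commute, so $\Image(\iota_i)$ is the sub-pro-object with level-$m$ term $u_i^m \mathcal A$.

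Then I would assemble the combined level morphism $h \colon Z \to \mathcal A^{(s^\infty)}$ with $Z(m) = \bigoplus_{i=1}^N \mathcal A^{(s_i^{mk_i})}$ and $h(m)((a_i)_i) = \sum_i u_i^m a_i$, whose level-$m$ image is the additive subfunctor $\sum_i u_i^m \mathcal A \subseteq \mathcal A$. Since finite sums of subobjects are computed levelwise in the pro-completion, $\Image(h) = \sum_i \Image(\iota_i)$, so the lemma reduces to showing that $h$ is a regular epimorphism. For this I apply the levelwise criterion for regular epimorphisms of level morphisms in a pro-completion: it suffices to show that for every $m$ there is $m' \geq m$ with $s^{m'-m}\mathcal A \subseteq \sum_i u_i^m \mathcal A$ as subfunctors of $\mathcal A$.

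The heart of the matter is producing, for each $m$, an exponent $L$ and elements $e_i \in K$ with $s^{L} = \sum_i e_i u_i^m$ in $K$; then $s^L a = \sum_i u_i^m(e_i a)$ lies in $\sum_i u_i^m\mathcal A$ uniformly in the argument $R$, and $m' = m + L$ works. To obtain such a relation I pass to $K_s$: there $s$ is invertible and $\mathcal D_{K_s}(u_i) = \mathcal D_{K_s}(s_i^{k_i}) = \mathcal D_{K_s}(s_i)$, so the covering hypothesis $\mathcal D(s) = \bigcup_i \mathcal D(s_i)$ gives $\bigcup_i \mathcal D_{K_s}(u_i) = \Spec(K_s)$, i.e. $(u_1, \ldots, u_N) K_s = K_s$. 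Raising a unit relation $1 = \sum_i \bar d_i u_i$ to a high power puts $1$ into $(u_1^m, \ldots, u_N^m)K_s$, and clearing denominators yields $s^{L} = \sum_i e_i u_i^m$ in $K$ for suitable $L$ and $e_i$. This verifies the criterion and hence the lemma.

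The step I expect to be the main obstacle is the explicit identification of the canonical map $\iota_i$ with the level morphism $a \mapsto u_i^m a$: one must track the isomorphism $\mathcal A^{(s_i^\infty)} \cong \mathcal A^{((ss_i)^\infty)}$ through its cofinal interleaving maps and check that the resulting pro-morphism is represented, up to source reindexing, by multiplication by $u_i^m$, all uniformly in the algebra argument $R$. Once the image subfunctors $u_i^m\mathcal A$ are pinned down, the remaining commutative-algebra computation over $K_s$ and the application of the pro-regular-epimorphism criterion are routine.
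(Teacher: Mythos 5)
Your proof is correct and takes essentially the same route as the paper's: compute the images levelwise in the pro-completion, observe that the elements \(u_i\) (the paper's \(a_i\), after normalizing \(s_i = s a_i\)) generate the unit ideal in \(K_s\), and use a pigeonhole argument on a high power of that relation to show that a sufficiently deep transition map of \(\mathcal A^{(s^\infty)}\) lands in \(\sum_i u_i^m \mathcal A\), which is exactly the paper's levelwise regular-epimorphism criterion. The only cosmetic differences are that the paper normalizes \(s_i = s a_i\) at the outset (so no cofinal reindexing of the source is needed) and carries out the pigeonhole directly in \(K\) with an explicit exponent \(m + n\max(0, Nm - N + 1)\), whereas you work in \(K_s\) and clear denominators afterwards.
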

\begin{proof}
    First of all, \(s\) invertible in all
    \(
        K_{s_i}
    \), so without loss of generality
    \(
        s_i = s a_i
    \) for some
    \(
        a_i \in K
    \). On the other hand,
    \(
        \sum_i K_s s_i = K_s
    \), i.e.
    \(
        s^n \in \sum_i Ks_i
    \) for sufficiently large \(n\). We have
    \[
        \sum_i \Image(
            \mathcal A(R)^{(s_i^m)}
            \to \mathcal A(R)^{(s^m)}
        )
        = (\sum_i a_i^m \mathcal A(R))^{(s^m)}
        \supseteq \mathcal A(R)^{
            (s^{m + n \max(0, Nm - N + 1)})
        }. \qedhere
    \]
\end{proof}

Fix
\(
    s \in K
\). One can define \(A\)-points of pointed affine \(K_s\)-schemes of finite presentation for every
\(
    A \in \IPRng_{\mathbf D, s}
\). Every pointed affine \(K_s\)-scheme is of the type
\(
    X = \Spec(R)
\) for some
\(
    R \in \Ring_{K_s}
\) together with a homomorphism
\(
    \varepsilon \colon R \to K_s
\) of \(K_s\)-algebras. If \(X\) is of finite presentation, then up to isomorphism
\(
    R = K_s[x_1, \ldots, x_n] / (f_1, \ldots, f_m)
\) and
\(
    \varepsilon(x_i) = 0
\) for some
\(
    f_i
    \in (x_1, \ldots, x_n)
    \leqt K_s[x_1, \ldots, x_n]
\). If
\(
    X' = \Spec(R')
\) is another such scheme with
\(
    R'
    = K_s[x'_1, \ldots, x'_{n'}]
    / (f'_1, \ldots, f'_{m'})
\) and
\(
    h \colon X \to X'
\) is a morphism of pointed schemes, then \(h\) is given by polynomials
\(
    h_1, \ldots, h_{n'} \in (x_1, \ldots, x_n)
\) (the images of \(x'_i\), so
\(
    f'_i(h_1, \ldots, h_{n'}) \in (f_1, \ldots, f_m)
\)) uniquely determined modulo
\(
    (f_1, \ldots, f_m)
\). For any
\(
    A \in \IPRng_{\mathbf D, s}
\) and finitely presented pointed affine \(K_s\)-scheme
\(
    X = \Spec(R)
\) with the presentation as above we construct the object
\[
    X(A) = \{
        \vec x \in A^n
    \mid
        f_1(\vec x) = \ldots = f_m(\vec x) = 0
    \} \in \IP_{\mathbf D}
\]
of \(A\)-points of \(X\) (the right hand side is the limit of
\(
    A^n \xrightarrow f A^m \xleftarrow 0 \{*\}
\)). Clearly, \(X(A)\) is functorial on both \(X\) and \(A\). The functors \(X({-})\) preserve all finite limits and monomorphisms, also the functors \(({-})(A)\) preserve all finite limits. For example, if
\(
    A \in \Rng_K
\), then \(X(A)\) is just the kernel of
\(
    X(A \rtimes K) \to X(K)
\), i.e. the limit of
\(
    X(A \rtimes K) \to X(K) \leftarrow \{*\}
\).

\section{Elementary groups over ind-pro-algebras}

In this section \(G\) be a reductive group scheme over \(K\) with an isotropic pinning
\(
    (T, \Psi)
\) of rank \(\geq 2\) over \(K_s\) for some
\(
    s \in K
\). We also fix a category \(\mathbf D\) from the definition of
\(
    \IP_{\mathbf D}
\), e.g.
\(
    \mathbf D = \{*\}
\) or \(
    \mathbf D = \Ring_K^\fp
\). For any
\(
    A \in \IPRng_{\mathbf D, s}
\) we have the group objects \(G(A)\) and
\(
    P_\alpha(A)
\) in
\(
    \IP_{\mathbf D}
\) for all
\(
    \alpha \in \Psi
\). The \textit{unrelativized elementary group} is the group subobject
\[
    \elem_{G, T, \Psi}(A) = \langle
        t_\alpha(A)
    \mid
        \alpha \in \Psi
    \rangle \leq G(A)
\]
in
\(
    \IP_{\mathbf D}
\) constructed in lemma \ref{subgr-gen} (in this definition we do not use the rank condition on
\(
    (\Psi, T)
\)).

Fix a power idempotent object
\(
    R \in \IPRng_{\mathbf D, s}
\) and
\(
    A \in \IPRng_{\mathbf D, s}
\) with a unital action of \(R\) (so \(A\) is an
\(
    R \rtimes K_s^{\Ind}
\)-algebra with the unitality condition). The \textit{relative elementary group} is
\[
    \elem_{G, T, \Psi}(R, A)
    = \Ker(
        \elem_{G, T, \Psi}(A \rtimes R)
        \to \elem_{G, T, \Psi}(R)
    )
    \leq G(A).
\]

We say that a subset
\(
    \Sigma \subseteq \Psi
\) is \textit{saturated special closed} if it is an intersection of \(\Psi\) with a convex cone not containing opposite non-zero vectors. Let
\[
    \elem_{G, T, \Sigma}(A)
    = \langle
        \Image(t_\alpha)
    \mid
        \alpha \in \Sigma
    \rangle
    \leq \elem_{G, T, \Psi}(A)
\]
and
\[
    z_\Sigma
    \colon \elem_{G, T, -\Sigma}(R)
    \times \elem_{G, T, \Sigma}(A)
    \to \elem_{G, T, \Psi}(R, A),\,
    (g, h) \mapsto \up gh
\]
for such \(\Sigma\). Let also
\(
    \elem'_{G, T, \Psi}(R, A)
\) be the subgroup of \(G(A)\) generated by the morphisms
\[
    z_\alpha
    \colon P_{-\alpha}(R) \times P_\alpha(A)
    \to G(A),\,
    (r, a)
    \mapsto \up{t_{-\alpha}(r)}{t_\alpha(a)}.
\]

\begin{lemma} \label{rel-elem}
    \(
        \elem_{G, T, \Psi}(R, A)
        = \langle
            \up g{t_\alpha(a)}
        \mid
            g \in \elem_{G, T, \Psi}(R),
            a \in P_\alpha(A),
            \alpha \in \Psi
        \rangle
    \).
\end{lemma}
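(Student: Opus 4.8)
The plan is to exploit the split extension coming from $A \rtimes R \to R$. Write $\mathcal E(B) = \elem_{G, T, \Psi}(B)$ for short. The projection $A \rtimes R \to R$ and the section $R \to A \rtimes R$ (both maps of $K_s^{\Ind}$-algebras) induce group-object morphisms $\pi \colon \mathcal E(A \rtimes R) \to \mathcal E(R)$ and $\sigma \colon \mathcal E(R) \to \mathcal E(A \rtimes R)$ with $\pi \sigma = \id$. Hence $\pi$ is a split epimorphism, $N := \elem_{G, T, \Psi}(R, A) = \Ker \pi$ is a normal subobject, $H := \sigma(\mathcal E(R))$, and $N \cap H = 1$ (an element of both maps to $1$ under $\pi$ and lies in $\Image \sigma$, so it is $\sigma(1) = 1$). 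The conjugation action of $\mathcal E(R)$ on $G(A)$ used in the statement is the one transported through $\sigma$, so the right-hand subobject $E := \langle \up g{t_\alpha(a)} \mid g \in \mathcal E(R),\, a \in P_\alpha(A),\, \alpha \in \Psi \rangle$ lives inside $G(A)$ and $\up g{t_\alpha(a)}$ means $\up{\sigma(g)}{t_\alpha(a)}$ computed in $\mathcal E(A \rtimes R)$. The goal is $E = N$.

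The inclusion $E \subseteq N$ is immediate: for $a \in P_\alpha(A)$ the element $t_\alpha(a)$ lies in $N$, because $a$ maps to $0$ under $P_\alpha(A \rtimes R) \to P_\alpha(R)$ and hence $t_\alpha(a)$ maps to $t_\alpha(0) = 1$ under $\pi$; since $N$ is normal and thus $\sigma(\mathcal E(R))$-stable, every generating morphism $(g, a) \mapsto \up{\sigma(g)}{t_\alpha(a)}$ factors through $N$.

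For the reverse inclusion the key is a decomposition of the defining generators of $\mathcal E(A \rtimes R)$. As a $K_s^{\Ind}$-module $A \rtimes R$ is the direct sum of the ideal $A$ and the image of the section, and $P_\alpha$ depends only on this module structure (in both the abelian and the $2$-step nilpotent cases), so the split sequence $P_\alpha(A) \to P_\alpha(A \rtimes R) \to P_\alpha(R)$ lets me write every $c \in P_\alpha(A \rtimes R)$ as $c = c_A \dotplus \sigma_*(c_R)$ with $c_A \in P_\alpha(A)$ and $c_R \in P_\alpha(R)$. Since $t_\alpha$ is a homomorphism of (possibly $2$-step nilpotent) modules, $t_\alpha(c) = t_\alpha(c_A)\, \sigma(t_\alpha(c_R))$ with $t_\alpha(c_A) \in E$ (take $g = 1$) and $\sigma(t_\alpha(c_R)) \in H$. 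Because $H$ normalizes $E$ — conjugation by $\sigma(g')$ sends $\up{\sigma(g)}{t_\alpha(a)}$ to $\up{\sigma(g'g)}{t_\alpha(a)}$, carrying the generating family of $E$ into itself — the product $E \cdot H$ is a group subobject, and the computation above shows it contains every $t_\alpha(c)$; hence $E \cdot H = \mathcal E(A \rtimes R)$. Intersecting with $N$ and using the modular law together with $E \subseteq N$ and $N \cap H = 1$ yields $N = N \cap (E \cdot H) = E \cdot (N \cap H) = E$, as required.

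The main obstacle is that all of this must be carried out for group objects in $\IP_{\mathbf D} = \Ind(\Pro(\Cat(\mathbf D, \Set)))$ rather than for ordinary groups, where generated subobjects are the filtered colimits $\langle X \rangle = \varinjlim_n (X \cup \{1\} \cup X^{-1})^n$ of lemma \ref{subgr-gen} and need not be preserved by the representable functors $\IP_{\mathbf D}(Y, -)$. I therefore expect the delicate point to be justifying the three ``abstract group theory'' steps — that $E \cdot H$ is a group subobject, that $E \cdot H = \mathcal E(A \rtimes R)$, and the modular-law identity — at the level of subobjects. This should be handled by combining the explicit colimit description of $\langle - \rangle$ with the fact, recorded in the excerpt, that in the ind-pro-completion of a finitely cocomplete coherent category images, finite intersections and finite unions of subobjects are computed levelwise and are preserved by the embeddings of $\Set$, $\Ind(\Set)$, $\Pro(\Set)$; concretely one reduces each equality of subobjects to a levelwise statement in $\Cat(\mathbf D, \Set)$ and there invokes the genuine split-extension computation.
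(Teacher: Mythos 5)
Your proof is correct and is essentially the paper's own argument: the paper likewise observes that the right-hand side is $\elem_{G,T,\Psi}(R)$-invariant, so that its product with (the section image of) $\elem_{G,T,\Psi}(R)$ is a subgroup object of $G(A \rtimes R)$ --- written there as the semidirect product $H \rtimes \elem_{G,T,\Psi}(R)$ --- which contains all generators $t_\alpha(P_\alpha(A \rtimes R))$ and hence equals $\elem_{G,T,\Psi}(A \rtimes R)$. Your modular-law conclusion is just the paper's final step of passing to the kernel of the projection onto $\elem_{G,T,\Psi}(R)$, phrased differently.
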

\begin{proof}
    Let \(H\) be the right hand side of the asserted equality. Clearly,
    \[
        \elem_{G, T, \Psi}(A)
        \leq H
        \leq \elem_{G, T, \Psi}(R, A).
    \]
    Also,
    \(
        H \rtimes \elem_{G, T, \Psi}(R)
        \subseteq G(A \rtimes R)
    \) is a subgroup (instead of just a subobject) since \(H\) is
    \(
        \elem_{G, T, \Psi}(R)
    \)-invariant. It follows that
    \(
        H \rtimes \elem_{G, T, \Psi}(R)
        = \elem_{G, T, \Psi}(A \rtimes R)
    \), so
    \(
        H = \elem_{G, T, \Psi}(R, A)
    \).
\end{proof}

\begin{lemma} \label{loc-root-gen}
    Let
    \(
        (T, \Psi)
    \) be an isotropic pinning of \(G\) of rank \(\geq 2\),
    \(
        \alpha \in \Psi
    \), \(e_k\) be a basis vector of \(P_\alpha\), and
    \(
        \alpha \in H \leq \mathbb R \Psi
    \) be a hyperplane. We denote the sets of roots of \(\Psi\) on the same side with respect to \(H\) by \(\Psi_1\) and \(\Psi_2\). Then there are
    \(
        V \geq 0
    \) and elements
    \(
        \beta_v \in \Psi_1
    \),
    \(
        \gamma_v \in \Psi_2
    \),
    \(
        p_v \in P_{\beta_v}(K)
    \),
    \(
        q_v \in P_{\gamma_v}(x K[x])
    \) for
    \(
        1 \leq v \leq V
    \) such that \(\beta_v\) are neighbors of \(\alpha\),
    \(
        i_v \beta_v + \gamma_v
        \in \{\alpha, 2 \alpha\} \cap \Psi
    \) for some
    \(
        i_v \geq 1
    \), and
    \[
        e_k \cdot x
        = \sum^\cdot_{1 \leq v \leq V}
            f_{\beta_v \gamma_v i_v 1}(p_v, q_v)
        \in P_\alpha(x K[x]).
    \]
\end{lemma}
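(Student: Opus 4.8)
The plan is to realize $e_k \cdot x$ as the weight-$\alpha$ part of a single Chevalley commutator between two root subgroups lying on opposite sides of $H$, and then to use the invertibility of the relevant structure constants to reach an arbitrary basis vector. First I would pass to the irreducible component $\Psi'$ of $\Psi$ containing $\alpha$; it again has rank $\geq 2$, and any relation $i\beta + \gamma \in \{\alpha, 2\alpha\}$ forces $\beta, \gamma \in \Psi'$, so nothing is lost. Applying Lemma~\ref{root-sys-dec} to $\Psi'$ and to the hyperplane $H \cap \mathbb R\Psi'$ produces a neighbor $\beta$ of $\alpha$ with $\beta \notin H$. Inside the plane $\mathbb R\alpha + \mathbb R\beta$ the hyperplane $H$ meets the plane in the line $\mathbb R\alpha$, and the orthogonal reflection of this plane fixing $\alpha$ is a symmetry of the rank-$2$ subsystem interchanging the two sides, so I may assume $\beta \in \Psi_1$. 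A neighbor makes an acute angle with $\alpha$, whence $\langle \alpha, \beta^\vee \rangle \geq 1$ and $\gamma := \alpha - \beta \in \Psi$; choosing a linear form $\ell$ defining $H$ with $\ell(\alpha) = 0$, $\ell(\beta) > 0$ gives $\ell(\gamma) = -\ell(\beta) < 0$, so $\gamma \in \Psi_2$ automatically and $\beta + \gamma = \alpha$ with $i = 1$.

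Next I would extract the weight-$\alpha$ factor from the commutator. Writing $q = \hat q \cdot x$ with $\hat q \in P_\gamma(K)$ and invoking the homogeneity $f_{\beta\gamma 11}(p, \hat q \cdot x) = f_{\beta\gamma 11}(p, \hat q) \cdot x$, the factor $t_\alpha\bigl(f_{\beta\gamma 11}(p, \hat q) \cdot x\bigr)$ appears in $[t_\beta(p), t_\gamma(\hat q \cdot x)]$. Because $\beta$ is a neighbor of $\alpha$ the open cone $\mathbb R_{>0}\alpha + \mathbb R_{>0}\beta$ contains no root, so the only other factors $t_{j\beta + k\gamma}$ that can occur have weight $2\alpha$ (when $\alpha \in \frac12\Psi$) or lie already in the handled subgroups; these are separated off. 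Since $({-}) \cdot x$ is a group endomorphism of $P_\alpha$, scaling distributes over the dotted sum, and the entire problem collapses to the claim that in $P_\alpha(K)$ the basis vector $e_k$ lies in the dotted span of the values $f_{\beta\gamma 11}(p, \hat q)$ as $p$ ranges over $P_\beta(K)$, $\hat q$ over $P_\gamma(K)$, and $\beta$ over the neighbors of $\alpha$ in $\Psi_1$ (supplemented, for the filtration part, by pairs with $i\beta + \gamma = 2\alpha$).

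The hard part is this spanning statement over an arbitrary base. Here I would note that the leading bilinear part of $f_{\beta\gamma 11}$ is, via the coordinatizations $t_\bullet$, the structure-constant map underlying the Lie bracket $\mathfrak g_\beta \times \mathfrak g_\gamma \to \mathfrak g_\alpha$, which is $L$-equivariant; by base change it suffices to check surjectivity in the universal split situation. There the acute-neighbor hypothesis keeps the root string through $\gamma$ minimal, forcing the Chevalley constant $N_{\beta\gamma} = \pm(p+1)$ to be a unit, while summing the images over all neighbors $\beta$ of $\alpha$ exhausts the $L$-module $\mathfrak g_\alpha$ (and, using pairs of weight $2\alpha$, also $\mathfrak g_{2\alpha}$). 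Granting this, $e_k$ --- hence $e_k \cdot x$ --- is the asserted dotted sum, with the $P_{2\alpha}$-corrections coming from the $2$-cocycle $c$ in the non-abelian case absorbed into further summands of weight $2\alpha$. The genuine obstacles are thus the surjectivity over $K$, which I would settle by a case analysis of the rank-$2$ subsystems $\mathsf A_2$, $\mathsf B_2$, $\mathsf G_2$, $\mathsf{BC}_2$ to guarantee the unit constants, together with the bookkeeping forced by the $2$-step nilpotent structure.
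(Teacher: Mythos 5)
Your outline follows the same general route as the paper's proof (generate a \(K\)-submodule of \(P_\alpha(xK[x])\) from commutator values, observe that it is stable under scalar extension, pass to a split cover, and finish with root combinatorics plus knowledge of the Chevalley constants), but the step carrying all the arithmetic weight is false. You claim that for a neighbor \(\beta\) of \(\alpha\) and \(\gamma = \alpha - \beta\) the constant \(N_{\beta\gamma} = \pm(p+1)\) is a unit because the neighbor hypothesis keeps the root string through \(\gamma\) minimal. The neighbor condition only forbids roots in \(\mathbb R_{>0}\alpha + \mathbb R_{>0}\beta\), i.e.\ it bounds the string \(\gamma, \gamma + \beta, \gamma + 2\beta, \ldots\) \emph{upward} (no \(\gamma + k\beta\) for \(k \geq 2\)), whereas \(p\) is the \emph{downward} string length, which it does not control. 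Concretely, if \(\alpha\) is a long root of a component of type \(\mathsf B_2\)/\(\mathsf C_2\), then every neighbor \(\beta\) of \(\alpha\) and \(\gamma = \alpha - \beta\) are short, \(\gamma - \beta\) is a root, and \(N_{\beta\gamma} = \pm 2\); for a long root of \(\mathsf G_2\) both \(\gamma - \beta\) and \(\gamma - 2\beta\) are roots and \(N_{\beta\gamma} = \pm 3\). So the dotted span of your \(i = 1\) values is only \(2P_\alpha\) (resp.\ \(3P_\alpha\)), and the spanning statement you reduce to is simply false over a base in which \(2\) (resp.\ \(3\)) is not invertible; the case analysis you defer to at the end would reveal this rather than repair it. This is exactly why the lemma is stated with arbitrary \(i_v \geq 1\): to reach a long \(\alpha\) one must use \(i = 2\) with \(\gamma = \alpha - 2\beta\), for which \((\beta, \gamma)\) \emph{is} a base of a saturated rank-\(2\) subsystem and \(c_{\beta\gamma 2 1}\) is a unit, and for the long roots of \(\mathsf G_2\) one needs \(i = 3\) as well. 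The paper's proof accordingly builds the module \(N\) from the \(i \in \{1, 2\}\) values and treats \(\mathsf G_2\) by a separate second step.

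Two further gaps would remain even after this repair. First, once non-bilinear generators (\(i \geq 2\)) enter, your reduction ``by base change to the split situation'' is no longer automatic: the submodule generated by the image of a homogeneous map of degree \(i\) commutes with scalar extension only via polarization, which works for \(i = 2\) but fails for the cubic \(\mathsf G_2\) terms; the paper's fix is that the enlarged module already contains \(3P_\alpha(xK[x])\) (coming from the \(i = 1\) generators), so the degree-\(3\) polarization defects vanish modulo it. Second, your exhaustion claim in the split case (``summing over all neighbors of \(\alpha\) exhausts \(\mathfrak g_\alpha\)'') is precisely the point where lemma \ref{root-sys-dec} must be applied in the \emph{absolute} root system \(\Phi\): to every preimage \(\alpha'\) of \(\alpha\) or \(2\alpha\) and to the preimage of \(H\), producing an absolute neighbor \(\beta'\) of \(\alpha'\) whose image lies in \(\Psi_1\). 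Choosing relative neighbors inside \(\Psi\), as you do, does not by itself see the individual absolute weight spaces \(\mathfrak g_{\alpha'}\), so the surjectivity onto each \(U_{\alpha'}\)-component is asserted rather than proved.
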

\begin{proof}
    Recall that in the split case (i.e. if
    \(
        (T, \Psi)
    \) is a pinning)
    \(
        f_{\beta \gamma i 1}(x, y)
        = c_{\beta \gamma i 1} x^i y
    \),
    \(
        c_{\beta \gamma i 1} \in K^*
    \) if
    \(
        (\beta, \gamma)
    \) is a base of a saturated root subsystem (i.e. an intersection of \(\Psi\) and a plane) and
    \(
        f_{\beta \gamma 1 1}(x, y)
        = c_{\beta \gamma 1 1} x y
    \),
    \(
        c_{\beta \gamma 1 1} \in 3 K^*
    \) if \(\beta\), \(\gamma\) are short roots in a component of type
    \(
        \mathsf G_2
    \) and
    \(
        \beta + \gamma
    \) is a long root.

    Now consider the \(K\)-submodule (or \(2\)-step nilpotent \(K\)-submodule)
    \(
        N \leq P_\alpha(x K[x])
    \) generated by
    \(
        f_{\beta \gamma i 1}(
            P_\beta(K),
            P_\gamma(x K[x])
        )
    \), where
    \(
        i \beta + \gamma
        \in \{\alpha, 2 \alpha\} \cap \Psi
    \),
    \(
        i \in \{1, 2\}
    \), and
    \(
        \beta \in \Psi_1
    \) is a neighbor to \(\alpha\). It it easy to see that \(N\) is preserved under scalar extensions. In order to study \(N\) we may assume that
    \(
        (T, \Psi)
    \) is contained in a pinning
    \(
        (T', \Phi)
    \) of \(G\). By lemma \ref{root-sys-dec} any preimage
    \(
        \alpha' \in \Phi
    \) of \(\alpha\) or \(2 \alpha\) has a neighbor \(\beta'\) with the image in
    \(
        \Psi_1
    \), i.e. the subgroup
    \(
        P_{\alpha'}(x K[x]) \leq P_\alpha(x K[x])
    \) is contained in \(N\) if \(\alpha'\) is not a long root in a component of type
    \(
        \mathsf G_2
    \) and
    \(
        3 P_{\alpha'}(x K[x]) \leq N
    \) otherwise. So
    \(
        N = P_\alpha(x K[x])
    \) unless \(\alpha\) is long in a component of type
    \(
        \mathsf G_2
    \).

    In the exceptional case we have only
    \(
        3 P_\alpha(x K[x]) \leq N
    \). Consider the \(K\)-submodule
    \(
        N' \leq P_\alpha(x K[x])
    \) generated by
    \(
        f_{\beta \gamma i 1}(
            P_\beta(K),
            P_\gamma(x K[x])
        )
    \), where
    \(
        i \beta + \gamma
        \in \{\alpha, 2 \alpha\} \cap \Psi
    \),
    \(
        i \geq 1
    \) is arbitrary, and
    \(
        \beta \in \Psi_1
    \) is a neighbor to \(\alpha\). The submodule \(N'\) is also preserved under scalar extensions since it contains
    \(
        3 P_\alpha(x K[x])
    \). The same argument as above implies that
    \(
        N' = P_\alpha(x K[x])
    \).
\end{proof}

\begin{lemma} \label{root-gen}
    For any
    \(
        \alpha \in \Psi
    \) and a hyperplane
    \(
        \alpha \in H \leq \mathbb R \Psi
    \) the group object
    \(
        P_\alpha(A)
    \) is generated by
    \(
        f_{\beta \gamma i 1}(P_\beta(R), P_\gamma(A))
    \) for all
    \(
        \beta, \gamma \in \Psi
    \) and
    \(
        i \geq 1
    \) such that
    \(
        i \beta + \gamma
        \in \{\alpha, 2 \alpha\} \cap \Psi
    \) and
    \(
        \beta \in \Psi_1
    \) is a neighbor of \(\alpha\), where \(\Psi_1\) and \(\Psi_2\) are the sets of roots of \(\Psi\) on the same side with respect to \(H\).
\end{lemma}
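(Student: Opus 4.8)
The plan is to transport the generic identity of Lemma~\ref{loc-root-gen} into \(A\) along a well-chosen generic element and then shuffle scalars between the two arguments of each \(f_{\beta\gamma i1}\) by means of the homogeneity relation \(f_{\beta\gamma i1}(p\cdot k,q\cdot k')=f_{\beta\gamma i1}(p,q)\cdot k^i k'\). Write \(H\) for the subobject of \(P_\alpha(A)\) generated by all \(f_{\beta\gamma i1}(P_\beta(R),P_\gamma(A))\) appearing in the statement. Since \(i\beta+\gamma\in\{\alpha,2\alpha\}\) and \(P_{2\alpha}\le P_\alpha\), every such image lies in \(P_\alpha(A)\), so \(H\le P_\alpha(A)\) and only the reverse inclusion needs proof. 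The group object \(P_\alpha(A)\) is generated by the images of the coordinate morphisms \(e_k\cdot({-})\colon\mathbb A^1\to P_\alpha\) evaluated on \(A\), as \(e_k\) runs over the basis vectors of \(P_\alpha\); so by Lemma~\ref{subgr-gen} it suffices to show \(e_k\cdot b\in H\) for \(b\) running over a subobject that additively generates \(A\).

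The choice of that subobject is where the hypotheses on \(R\) enter. Let \(i_{\max}\) be the largest \(i\) occurring for \(\alpha\) (a finite bound coming from the root system). Because the action of \(R\) on \(A\) is unital and \(R\) is power idempotent, writing \(R=\langle r_1 r_2^{i_{\max}}\mid r_1,r_2\in R\rangle\) and \(A=\langle RA\rangle\) shows that \(A\) is additively generated by the image of \((r_1,r_2,a)\mapsto r_1 r_2^{i_{\max}}a\). Hence I may take \(b=r_1 r_2^{i_{\max}}a\). The identity of Lemma~\ref{loc-root-gen} is an equality of pointed morphisms \(\mathbb A^1\to P_\alpha\), so it stays valid after evaluating the variable at \(t=r_1 r_2^{i_{\max}}a\in\mathbb A^1(A)=A\). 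Expanding each \(q_v\) as a pointed polynomial map \(q_v(t)=\sum_{j\ge1}c_{v,j}\cdot t^j\) with \(c_{v,j}\in P_{\gamma_v}(K)\), and using that \(f_{\beta_v\gamma_v i_v1}(p_v,{-})\) is a homomorphism in its second argument (its degree there is \(1\)), every resulting factor has the form \(f_{\beta_v\gamma_v i_v1}(p_v,c_{v,j})\cdot(r_1 r_2^{i_{\max}}a)^j\).

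For each such factor I route exactly \(i_v\) copies of \(r_2\) into the first argument. Since \((r_1 r_2^{i_{\max}}a)^j=r_1^j r_2^{i_{\max}j}a^j\) and \(i_{\max}j\ge i_v\), the homogeneity relation gives
\[
    f_{\beta_v\gamma_v i_v 1}(p_v, c_{v,j})\cdot r_1^j r_2^{i_{\max}j} a^j
    = f_{\beta_v\gamma_v i_v 1}\bigl(
        p_v\cdot r_2,\,
        c_{v,j}\cdot (r_1^j\, r_2^{i_{\max}j - i_v}\, a^j)
    \bigr).
\]
The first argument \(p_v\cdot r_2\) is a constant point scaled by \(r_2\in R\), hence lies in \(P_{\beta_v}(R)\), and the second argument lies in \(P_{\gamma_v}(A)\); moreover the triple \((\beta_v,\gamma_v,i_v)\) is among those listed in the statement (it is precisely the output of Lemma~\ref{loc-root-gen}). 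Thus each factor lies in \(H\), so \(e_k\cdot b\in H\), giving \(P_\alpha(A)\le H\) and hence equality.

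The essential point, and the step most prone to error, is the exponent accounting when \(i_v\ge2\) — the \(\mathsf G_2\) long-root and \(\mathsf{BC}_2\) contributions of Lemma~\ref{loc-root-gen}. A lone generic scalar \(r_2\) can be promoted to \(r_2^{i_v}\) inside \(f\) only if at least \(i_v\) copies are present, which is exactly why I prepackage the generic element of \(A\) with the high power \(r_2^{i_{\max}}\) supplied by power idempotence; mere idempotence, which yields only products of distinct elements, would not let me feed a single element to a varying power \(i_v\), and the surplus \(r_2^{i_{\max}j-i_v}\) is kept harmlessly in the second slot. The only remaining care concerns \(\alpha\in\frac12\Psi\), where \(e_k\cdot({-})\) is additive merely modulo \(P_{2\alpha}\) and the products \(\sum^\cdot\) must be ordered; but every correction term coming from the \(2\)-step module structure (the maps \(\tau\) and the defining cocycle of \(P_\alpha\)) lands in \(P_{2\alpha}(A)\), so I first dispose of \(P_{2\alpha}(A)\) by the same routing applied to the roots mapping to \(2\alpha\) and then lift.
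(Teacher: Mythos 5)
Your overall strategy is the paper's: transport the generic identity of Lemma~\ref{loc-root-gen}, use power idempotence of \(R\) together with unitality of its action on \(A\) to reduce to elements of the form \(r_1r_2^{i_{\max}}a\) (the paper uses \(ar^3\), with \(3\) the universal bound on \(i\)), and then move scalars between the two slots of \(f_{\beta\gamma i1}\) by the homogeneity relation \(f(x\cdot k,y\cdot k')=f(x,y)\cdot k^ik'\). Your closing diagnosis — that mere idempotence would not suffice and a single generic element must carry a high power — is exactly the point of the paper's hypothesis as well.

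However, one step fails as written: the expansion \(q_v(t)=\sum_{j\ge1}c_{v,j}\cdot t^j\) with constant \(c_{v,j}\in P_{\gamma_v}(K)\) is not available when \(\gamma_v\) is ultrashort, and such \(\gamma_v\) do occur (e.g.\ \(\beta_v\) short, \(\gamma_v\) ultrashort, \(\beta_v+\gamma_v=\alpha\) in a \(\mathsf{BC}_2\) subsystem, where \(f_{\beta_v\gamma_v11}\) is only a homomorphism in its second slot and does not factor through \(P_{\gamma_v}/P_{2\gamma_v}\)). The reason is that on \(P_{2\gamma_v}\) the monoid action is quadratic, \(m_0\cdot k=k^2m_0\); consequently, in any \(\dotplus\)-sum of terms \(c\cdot t^j\) whose components modulo \(P_{2\gamma_v}\) cancel, the \(P_{2\gamma_v}\)-component involves only even powers of \(t\), so an element such as \(t\,u_0\in P_{2\gamma_v}(tK[t])\) with \(u_0\neq0\) is not such a sum — and nothing in Lemma~\ref{loc-root-gen} prevents the \(q_v\) from containing such terms. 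The gap is reparable inside your own framework, since \(f_{\beta_v\gamma_v i_v1}(p_v,{-})\) sends \(P_{2\gamma_v}\) into \(P_{2\alpha}\) (uniqueness of the root decomposition of \([t_{\beta_v}(p_v),t_{2\gamma_v}({-})]\)), and you already propose to dispose of \(P_{2\alpha}(A)\) first; but the paper's mechanism avoids the issue entirely: do not substitute the composite element and do not expand \(q_v\). Instead, evaluate the identity at \(a\), apply the group endomorphism \(({-})\cdot r^3\) of \(P_\alpha(A)\) to both sides, and rewrite each factor as \(f_{\beta_v\gamma_v i_v1}(p_v\cdot r,\,q_v(a)\cdot r^{3-i_v})\); here \(q_v(a)\in P_{\gamma_v}(A)\) is kept whole, and only the homogeneity relation and the fact that scaling is a group endomorphism are used.
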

\begin{proof}
    By lemma \ref{loc-root-gen} there are identities
    \[
        e_k \cdot ar^3
        = \sum^\cdot_{1 \leq v \leq V_k}
            f_{\beta_{kv}, \gamma_{kv}, i_{kv}, 1}(
                p_{kv} \cdot r,
                q_{kv}(a) \cdot r^{3 - i_{kv}}
            )
    \]
    of morphisms
    \(
        A \times R \to P_\alpha(A)
    \), where
    \(
        \beta_{kv} \in \Psi_1
    \) are neighbors of \(\alpha\),
    \(
        i_{kv} \geq 1
    \),
    \(
        i_{kv} \beta_{kv} + \gamma_{kv}
        \in \{\alpha, 2 \alpha\} \cap \Psi
    \),
    \(
        p_{kv} \in P_{\beta_{kv}}(K_s)
    \), and
    \(
        q_{kv} \in P_{\gamma_{kv}}(x K_s[x])
    \).
\end{proof}

\begin{lemma} \label{elim-abs}
    For any
    \(
        \alpha \in \Psi
    \) the elementary subgroup
    \(
        \elem_{G, T, \Psi}(R)
    \) is generated by
    \(
        t_\beta(R)
    \) for
    \(
        \beta \in \Phi \setminus \mathbb R \alpha
    \).
\end{lemma}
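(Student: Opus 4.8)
The plan is to set $E' = \langle t_\beta(R) \mid \beta \in \Psi,\ \beta \notin \mathbb R \alpha \rangle$ and prove $\elem_{G, T, \Psi}(R) = E'$. The inclusion $E' \leq \elem_{G, T, \Psi}(R)$ is immediate, so everything reduces to showing that the remaining generators lie in $E'$, namely $t_{\pm\alpha}(R)$ together with $t_{\pm 2\alpha}(R)$ when $2\alpha \in \Psi$. The doubled ones are redundant: since $t_\alpha$ restricts to $t_{2\alpha}$ on the subobject $P_{2\alpha} \leq P_\alpha$, we have $\Image(t_{2\alpha}) \subseteq \Image(t_\alpha)$, so it suffices to prove $\Image(t_\alpha) \subseteq E'$ and $\Image(t_{-\alpha}) \subseteq E'$, and by symmetry I only treat $\alpha$.

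First I would fix a hyperplane $H \leq \mathbb R \Psi$ with $\alpha \in H$, which exists since the rank of $(T, \Psi)$ is $\geq 2$, and apply lemma \ref{root-gen} with $A = R$ (the action of the power idempotent object $R$ on itself is unital). This presents $P_\alpha(R)$ as the group subobject generated by the images $f_{\beta\gamma i1}(P_\beta(R), P_\gamma(R))$, where $\beta \in \Psi_1$ is a neighbor of $\alpha$ and $i\beta + \gamma \in \{\alpha, 2\alpha\} \cap \Psi$. Because $t_\alpha \colon P_\alpha(R) \to G(R)$ is a homomorphism of group objects, $\Image(t_\alpha)$ is generated by the images $t_\alpha(f_{\beta\gamma i1}(p, q))$ (using that the image of a generated subobject is generated by the image, cf.\ lemma \ref{subgr-gen}), so it is enough to show each such image lands in $E'$.

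The key observation is that both $\beta$ and $\gamma$ necessarily avoid $\mathbb R\alpha$: as a neighbor, $\beta$ is independent of $\alpha$, and from $\gamma = \alpha - i\beta$ (or $2\alpha - i\beta$) one sees that $\gamma \in \mathbb R\alpha$ would force $\beta \in \mathbb R\alpha$, a contradiction. Hence $t_\beta(p), t_\gamma(q) \in E'$, and so is the commutator $[t_\beta(p), t_\gamma(q)]$. Expanding it by the Chevalley commutator formula, every factor $t_{i'\beta + j'\gamma}(f_{\beta\gamma i'j'}(p, q))$ whose root is not proportional to $\alpha$ already lies in $E'$. Substituting $\gamma = \alpha - i\beta$ gives $i'\beta + j'\gamma = (i' - j'i)\beta + j'\alpha$, which is proportional to $\alpha$ exactly when $i' = j'i$, producing $j'\alpha$; since $3\alpha$ is never a root and the doubled term $(i', j') = (2i, 2)$ vanishes by the convention $f_{\beta, \gamma, 2i, 2} = 0$ whenever both $\alpha$ and $2\alpha$ are roots, the unique surviving proportional factor is $t_\alpha(f_{\beta\gamma i1}(p, q))$ (and symmetrically $t_{2\alpha}(f_{\beta\gamma i1}(p, q))$ in the case $i\beta + \gamma = 2\alpha$, where $\gamma = 2\alpha - i\beta$).

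Solving the commutator formula for this single factor now expresses it as a product of $[t_\beta(p), t_\gamma(q)]$ and finitely many factors $t_{i'\beta + j'\gamma}(\cdots)$ with $i'\beta + j'\gamma \notin \mathbb R\alpha$, all of which lie in $E'$; hence $t_\alpha(f_{\beta\gamma i1}(p, q)) \in E'$, and likewise for the $2\alpha$-type generators. This yields $\Image(t_\alpha) \subseteq E'$, and the verbatim argument for $-\alpha$ (same $H$, neighbors of $-\alpha$) gives $\Image(t_{-\alpha}) \subseteq E'$, so all generators of $\elem_{G, T, \Psi}(R)$ lie in $E'$ and equality follows. The one point I would spell out with care is the bookkeeping that isolates a \emph{single} $\mathbb R\alpha$-proportional factor in the commutator formula, in particular the cancellation of the doubled term by the stated convention; the rest is functoriality of images through $t_\alpha$ combined with lemma \ref{root-gen}.
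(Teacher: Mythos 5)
Your proof is correct and is essentially the paper's own argument written out in full: the paper's proof of this lemma is literally ``this easily follows from Lemma \ref{root-gen} applied to \(A = R\)'', and your expansion --- expressing the generators of \(P_\alpha(R)\) via Lemma \ref{root-gen}, expanding \([t_\beta(p), t_\gamma(q)]\) by the Chevalley commutator formula, and isolating the unique \(\mathbb R\alpha\)-proportional factor using the convention \(f_{\beta,\gamma,2i,2} = 0\) --- is exactly the intended bookkeeping. The only micro-omission is the case \(\tfrac 1 2 \alpha \in \Psi\), where the generators \(t_{\pm\alpha/2}(R)\) must also be absorbed; since \(\mathbb R(\alpha/2) = \mathbb R\alpha\), the statement is unchanged when \(\alpha\) is replaced by \(\alpha/2\), so the standard reduction ``without loss of generality \(\tfrac 1 2 \alpha \notin \Psi\)'' (used verbatim by the paper in the proof of Lemma \ref{elim-rel}) makes your case enumeration complete.
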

\begin{proof}
    This easily follows from lemma \ref{root-gen} applied to
    \(
        A = R
    \).
\end{proof}

\begin{lemma} \label{z-sigma}
    For any saturated special closed subset
    \(
        \Sigma \subseteq \Psi
    \) the morphism \(z_\Sigma\) takes values in
    \(
        \elem'_{G, T, \Psi}(R, A)
    \).
\end{lemma}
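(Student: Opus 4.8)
The plan is to reduce to conjugates of single root elements and then expand by the Chevalley commutator formula, checking that every factor produced is one of the generators of $H := \elem'_{G, T, \Psi}(R, A)$. Since $\up{g}{(h_1 h_2)} = \up{g}{h_1}\,\up{g}{h_2}$ and every $h \in \elem_{G, T, \Sigma}(A)$ is a product of elements $t_\alpha(a)$ with $\alpha \in \Sigma$, it suffices to prove that $\up{g}{t_\rho(a)} \in H$ for every $g \in \elem_{G, T, -\Sigma}(R)$, every $\rho \in \Psi$, and $a \in P_\rho(A)$; I allow an arbitrary $\rho$, not only $\rho \in \Sigma$, so that the statement is stable under the expansion below. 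Two remarks settle the generators: first, $z_\mu(0, c) = t_\mu(c)$, so every absolute root element $t_\mu(c)$ with $c \in P_\mu(A)$ lies in $H$, and in particular $\elem_{G, T, \Sigma}(A) \leq \elem_{G, T, \Psi}(A) \leq H$; second, $z_\mu(r, c) = \up{t_{-\mu}(r)}{t_\mu(c)} \in H$ by definition.

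The key local computation is the following. For linearly independent $\beta, \gamma \in \Psi$, $r \in P_\beta(R)$ and $c \in P_\gamma(A)$ the commutator formula gives $[t_\beta(r), t_\gamma(c)] = \prod_{i, j > 0} t_{i\beta + j\gamma}(f_{\beta\gamma ij}(r, c))$, and in every case $f_{\beta\gamma ij}$ has positive degree in its second argument, so $f_{\beta\gamma ij}(r, c)$ vanishes when $c$ is specialized to $0$ and hence lies in $P_{i\beta + j\gamma}(A) = \Ker\bigl(P_{i\beta + j\gamma}(A \rtimes R) \to P_{i\beta + j\gamma}(R)\bigr)$. Therefore $\up{t_\beta(r)}{t_\gamma(c)} = [t_\beta(r), t_\gamma(c)]\, t_\gamma(c)$ is a finite product of absolute root elements with arguments in the $A$-part. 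If instead $\gamma = -\beta$ one gets exactly $z_{-\beta}(r, c)$, and if $\gamma, \beta$ are proportional the result is computed inside the nilpotent group $U_\beta$. In all cases $\up{t_\beta(r)}{t_\gamma(c)}$ is a finite product of the generators $t_\mu(\cdot_A)$ and $z_\mu(\cdot)$ of $H$.

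Now write $g = t_{\nu_1}(s_1) \cdots t_{\nu_k}(s_k)$ with $\nu_i \in -\Sigma$ and expand $\up{g}{t_\rho(a)}$ one conjugation at a time. I maintain the invariant that at each stage the current element is a finite product of factors, each either an absolute root element $t_\mu(\cdot_A)$ (with $\mu \in \Psi$ arbitrary) or a generator $z_\mu(\cdot_R, \cdot_A)$ with $\mu \in \Sigma$, and that all auxiliary $R$-valued root elements occurring have roots in $-\Sigma$. This invariant is preserved precisely because $-\Sigma$ is special closed: all commutators among $R$-valued root elements remain in $\langle U_\nu \mid \nu \in -\Sigma \rangle$, so every opposite-root conjugation pairs a conjugator whose root lies in $-\Sigma$ against a root in $\Sigma$ and hence produces a genuine $z$-generator with index in $\Sigma$. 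Conjugating a factor $t_\mu(\cdot_A)$ by $t_\nu(s)$ is covered by the previous paragraph; the only substantial case is conjugating a factor $z_\mu(r, c)$ with $\nu \neq -\mu$, which one treats by conjugating $t_{-\mu}(r)\,t_\mu(c)\,t_{-\mu}(-r)$ factorwise and collecting the emitted $R$-valued factors (roots in $-\Sigma$) on the outside as a single $w \in \elem_{G, T, -\Sigma}(R)$, so that $\up{t_\nu(s)}{z_\mu(r, c)} = \up{w}{M}$ with $M$ a product of absolute root elements in the $A$-part.

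The main obstacle is exactly this self-referential step $\up{t_\nu(s)}{z_\mu(r, c)} = \up{w}{M}$, where one must guarantee that the whole expansion is finite. The decisive point is that this entire subcomputation is confined to the rank-two subsystem $\Psi \cap (\mathbb R\nu + \mathbb R\mu)$: all roots produced lie in $\mathbb Z\nu + \mathbb Z\mu$, and $w$ has its roots there as well, so the recursion never leaves this finite rank-$\leq 2$ system and terminates by the nilpotency of the unipotent subgroups (it is, in effect, the rank-$\leq 2$ instance of the lemma, verified directly by the commutator relations). Consequently $\up{g}{t_\rho(a)}$ is a finite product of the generators $t_\mu(\cdot_A)$ and $z_\mu(\cdot)$ of $H$, hence lies in $H$. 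Finally, because the ind-pro formalism permits verifying such a factorisation of the morphism $z_\Sigma$ levelwise, where the lengths of $g$ and $h$ are bounded by lemma \ref{subgr-gen}, this shows that $z_\Sigma$ takes values in $\elem'_{G, T, \Psi}(R, A)$.
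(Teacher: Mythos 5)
Your skeleton is sound: reducing the second argument to a single root element, observing that \(f_{\beta\gamma ij}(r,c)\) lies in \(P_{i\beta+j\gamma}(A)\) because it dies under the projection \(A \rtimes R \to R\), and noting that all \(R\)-valued letters keep their roots inside \(-\Sigma\) because \(-\Sigma\) is special closed --- these are exactly the ingredients the paper also uses. The genuine gap is the step you yourself flag as ``the main obstacle'': the termination of the self-referential expansion \(\up{t_\nu(s)}{z_\mu(r,c)} = \up{w}{M}\). Your justification --- confinement to a rank-\(\leq 2\) subsystem plus ``nilpotency of the unipotent subgroups'' --- does not prove termination. Nilpotency bounds each single Chevalley expansion, but not the nested recursion: \(w\) is in general a longer word than the single letter \(t_\nu(s)\) you started from, expanding \(\up{w}{M}\) recreates an opposite pairing (the letter \(t_{-\mu}(\cdot)\) of \(w\) against the letter \(t_\mu(\cdot)\) of \(M\)), and the resulting \(z\)-generator must again be conjugated by the remaining letters of \(w\), spawning further steps of the same shape with new conjugators. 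Appealing to ``the rank-\(\leq 2\) instance of the lemma, verified directly by the commutator relations'' is circular, since that instance contains exactly the same recursion and is nowhere verified. A termination argument can be supplied (for example, one can check that in every spawned step the closed cone generated by the conjugator root and the negative of the \(z\)-root strictly shrinks, and a rank-two system contains only finitely many such cones), but as written the proof does not close.

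The paper avoids the recursion altogether by ordering the reductions differently, and this is the device your letter-by-letter expansion is missing. Since \(z_\Sigma\) is a homomorphism in the second argument, first fix the target to be a single \(t_\alpha(a)\); then, using \(\dim \mathbb R\Sigma \geq 2\), choose an extreme root \(\beta \in \Sigma\) \emph{linearly independent of \(\alpha\)} and peel the corresponding letter off the conjugator, \(g = g'\, t_{-\beta}(r)\) with \(g'\) supported on \(-(\Sigma\setminus\mathbb R\beta)\). Because \(\beta\) and \(\alpha\) are not proportional, \(\up{t_{-\beta}(r)}{t_\alpha(a)}\) is a product of \(A\)-valued root elements only, so no \(z\)-generator is ever created and nothing ever needs to be conjugated recursively; one concludes by induction, the support of the conjugating argument having strictly decreased. (To be precise the induction must allow the target's closed set to vary, since the roots \(-i\beta + j\alpha\) may leave \(\Sigma\); the decreasing quantity is the support of the \(R\)-valued argument, not \(|\Sigma|\) itself.) This combination --- extremality to split off a letter of the conjugator, linear independence to forbid opposite pairings --- is precisely what makes the induction terminate, and it is the idea your proposal needs in order to discharge the self-referential step.
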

\begin{proof}
    We use induction on \(|\Sigma|\), the case
    \(
        \dim(\mathbb R \Sigma) \leq 1
    \) is clear. If
    \(
        \Sigma \neq \varnothing
    \), then there exists an \textit{extreme root}
    \(
        \alpha \in \Sigma
    \), i.e. a root such that
    \(
        \frac 1 2 \alpha \notin \Sigma
    \) and
    \(
        \mathbb R_{\geq 0} \alpha
    \) is an extreme ray of the convex cone spanned by \(\Sigma\) (then
    \(
        \Sigma \setminus \mathbb R \alpha
    \) is also a saturated special closed set). Moreover, if
    \(
        \dim(\mathbb R \Sigma) \geq 2
    \), then there is an extreme root
    \(
        \alpha \in \Sigma
    \) linearly independent with any given element of \(\Sigma\).

    Since \(z_\Sigma\) is a homomorphism on the second argument, it suffice to check that any
    \(
        z_\Sigma(g, t_\alpha(a))
    \) lie in
    \(
        \elem'_{G, T, \Psi}(R, A)
    \). Take an extreme
    \(
        \beta \in \Sigma
    \) linearly independent with \(\alpha\), so the product map
    \[
        \elem_{
            G, T, \Sigma \setminus \mathbb R \beta
        }(R)
        \times P_\beta(R)
        \to \elem_{G, T, \Sigma}(R)
    \]
    is an isomorphism. Now
    \[
        z_\Sigma(g t_\beta(r), t_\alpha(a))
        = z_{\Sigma \setminus \mathbb R \beta}(
            g,
            \up{t_\beta(r)}{t_\alpha(a)}
        )
    \]
    for
    \(
        g \in \elem_{
            G, T, \Sigma \setminus \mathbb R \beta
        }(R)
    \) and the right hand side lies in
    \(
        \elem'_{G, T, \Psi}(R, A)
    \) by the induction hypothesis.
\end{proof}

Let
\(
    \alpha \in \Psi
\). We say that
\(
    \Sigma \subseteq \Psi
\) is a \textit{thick \(\alpha\)-series} if is if of type
\(
    \Sigma = \Psi \cap (
        \mathbb R_{> 0} \beta
        + \mathbb R \alpha
    )
\) for some
\(
    \beta \in \Psi \setminus \mathbb R \alpha
\). Clearly,
\(
    \Psi \setminus \mathbb R \alpha
\) is a disjoint decomposition of thick \(\alpha\)-series. Every thick \(\alpha\)-series is a saturated special closed set.

\begin{lemma} \label{elim-rel}
    For any
    \(
        \alpha \in \Psi
    \) the group object
    \(
        \elem'_{G, T, \Psi}(R, A)
    \) is generated by the images of \(z_\Sigma\) for thick \(\alpha\)-series \(\Sigma\).
\end{lemma}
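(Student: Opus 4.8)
The plan is to prove the two inclusions between $\elem'_{G, T, \Psi}(R, A)$ and the group object $H \leq G(A)$ generated by the images of $z_\Sigma$ over all thick $\alpha$-series $\Sigma$. The inclusion $H \leq \elem'_{G, T, \Psi}(R, A)$ is immediate from lemma \ref{z-sigma}, since every thick $\alpha$-series is saturated special closed. For the reverse inclusion it suffices to show that each generating morphism $z_{\alpha'}\colon P_{-\alpha'}(R) \times P_{\alpha'}(A) \to G(A)$, $\alpha' \in \Psi$, factors through $H$. If $\alpha' \notin \mathbb R \alpha$, then $\alpha'$ lies in a unique thick $\alpha$-series $\Sigma$, and since $t_{-\alpha'}(r) \in \elem_{G, T, -\Sigma}(R)$ and $t_{\alpha'}(a) \in \elem_{G, T, \Sigma}(A)$ we get $z_{\alpha'}(r, a) = z_\Sigma(t_{-\alpha'}(r), t_{\alpha'}(a))$, which lies in the image of $z_\Sigma$. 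So the essential case is $\alpha' \in \mathbb R \alpha$.

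For $\alpha' \in \mathbb R \alpha$ I would first note that $a \mapsto z_{\alpha'}(r, a)$ is a homomorphism of group objects, being the composite of the homomorphism $t_{\alpha'}$ with conjugation by $t_{-\alpha'}(r)$. Hence, by lemma \ref{root-gen} applied to $\alpha'$ (with any hyperplane $H_0 \ni \alpha'$), it is enough to treat $a = f_{\beta \gamma i 1}(p, q)$ with $p \in P_\beta(R)$, $q \in P_\gamma(A)$, where $\beta$ is a neighbor of $\alpha'$ and $\nu := i\beta + \gamma \in \{\alpha', 2\alpha'\} \cap \Psi$. Being a neighbor, $\beta$ is linearly independent of $\alpha$, so $\beta \notin \mathbb R \alpha$, and likewise $\gamma = \nu - i\beta \notin \mathbb R \alpha$; let $\Sigma_1$ be the thick $\alpha$-series containing $\beta$ and $\Sigma_2 = -\Sigma_1$ the one containing $\gamma$ (both lie inside the plane $\mathbb R \alpha + \mathbb R \beta$).

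The crux is the expansion of the Chevalley commutator $[t_\beta(p), t_\gamma(q)]$. Writing its roots as $i'\beta + j'\gamma = (i' - j'i)\beta + j'\nu$, a factor lies on the line $\mathbb R \alpha$ exactly when $i' = j'i$, giving the roots $j'\nu$; the index $(i', j') = (i, 1)$ yields precisely $t_\nu(f_{\beta \gamma i 1}(p, q)) = t_{\alpha'}(a)$, whereas the only other candidate $(2i, 2) \mapsto 2\nu$ is killed by the convention $f_{\beta, \gamma, 2i, 2} = 0$ (and $3\nu, 4\nu$ are not roots). The step I expect to be the main obstacle is thus to verify that the commutator has a \emph{single} factor on $\mathbb R \alpha$, namely $t_{\alpha'}(a)$, all remaining factors being $t_\delta(\cdot)$ with $\delta \in \Sigma_1 \cup \Sigma_2$ and arguments in $A$. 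Granting this, $t_{\alpha'}(a) = [t_\beta(p), t_\gamma(q)]\, P^{-1}$ with $P$ a product of elements of $\elem_{G, T, \Sigma_1}(A)$ and $\elem_{G, T, \Sigma_2}(A)$.

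It remains to conjugate by $t_{-\alpha'}(r)$. The point is that conjugation by $t_{-\alpha'}(r)$, with $-\alpha' \in \mathbb R \alpha$, preserves the sign of the $\beta$-coordinate in the plane $\mathbb R \alpha + \mathbb R \beta$: expanding $\up{t_{-\alpha'}(r)}{t_\delta(\cdot)} = [t_{-\alpha'}(r), t_\delta(\cdot)]\, t_\delta(\cdot)$, every new root $-i'\alpha' + j'\delta$ keeps the $\beta$-coordinate sign of $\delta$. Hence $g' := \up{t_{-\alpha'}(r)}{t_\beta(p)} \in \elem_{G, T, \Sigma_1}(R)$, $h' := \up{t_{-\alpha'}(r)}{t_\gamma(q)} \in \elem_{G, T, \Sigma_2}(A)$, and $\up{t_{-\alpha'}(r)}{t_\delta(\cdot)} \in \elem_{G, T, \Sigma_k}(A)$ for $\delta \in \Sigma_k$. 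Therefore $\up{t_{-\alpha'}(r)}{[t_\beta(p), t_\gamma(q)]} = [g', h'] = z_{\Sigma_2}(g', h')\, {h'}^{-1} \in H$ (using $-\Sigma_2 = \Sigma_1$), while $\up{t_{-\alpha'}(r)}{P^{-1}}$ is a product of elements of $\elem_{G, T, \Sigma_1}(A)$ and $\elem_{G, T, \Sigma_2}(A)$, all of which lie in $H$ as images of $z_{\Sigma_1}$ and $z_{\Sigma_2}$. Multiplying the two pieces gives $z_{\alpha'}(r, a) = \up{t_{-\alpha'}(r)}{t_{\alpha'}(a)} \in H$, completing the reduction and the proof.
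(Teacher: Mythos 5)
Your proposal is correct and takes essentially the same route as the paper's (much terser) proof: the paper likewise disposes of the roots off \(\mathbb R\alpha\) trivially, uses the fact that conjugation by \(t_{\pm\alpha}(P_{\pm\alpha}(R))\) preserves \(\elem_{G,T,\Sigma}(A)\) (and its \(R\)-point analogue) for thick \(\alpha\)-series \(\Sigma\) to handle the conjugating factor \(t_{-\alpha'}(r)\), and then invokes lemma \ref{root-gen} together with the Chevalley commutator formula to express \(t_{\pm\alpha}(a)\) through the \(z_\Sigma\), which is exactly your expansion of the commutator \([t_\beta(p),t_\gamma(q)]\) with a single factor on the line \(\mathbb R\alpha\). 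Your write-up merely makes explicit the details the paper compresses into ``this easily follows from lemma \ref{root-gen}.''
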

\begin{proof}
    Without loss of generality,
    \(
        \frac 1 2 \alpha \notin \Psi
    \). Since
    \(
        t_{\pm \alpha}(P_{\pm \alpha}(R))
    \) normalize
    \(
        \elem_{G, T, \Sigma}(A)
    \) for all thick \(\alpha\)-series \(\Sigma\), we only have to check that
    \(
        t_{\pm \alpha}(a)
    \) for
    \(
        a \in P_{\pm \alpha}(A)
    \) may be expressed in terms of these \(z_\Sigma\). This easily follows from lemma \ref{root-gen}.
\end{proof}

\begin{lemma} \label{expl-conj}
The subgroup
\(
    \elem_{G, T, \Psi}(R)
\) normalizes
\(
    \elem^\prime_{G, T, \Psi}(R, A) \leq G(A)
\).
\end{lemma}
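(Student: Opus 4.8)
The plan is to reduce the statement to checking that a single root element $t_\beta(r)$, with $\beta \in \Psi$ and $r \in P_\beta(R)$, conjugates $\elem'_{G, T, \Psi}(R, A)$ into itself. Since $\elem_{G, T, \Psi}(R)$ is generated by the root morphisms $t_\beta(R)$ (lemma \ref{subgr-gen}), this reduction is legitimate: using the description $\elem_{G, T, \Psi}(R) = \varinjlim_n (S \cup \{1\} \cup S^{-1})^n$ with $S = \bigcup_\beta \Image(t_\beta(R))$, one checks by induction on the word length that the conjugation morphism $(g, x) \mapsto \up g x$ sends $(S \cup \{1\} \cup S^{-1})^n \times \elem'_{G, T, \Psi}(R, A)$ into $\elem'_{G, T, \Psi}(R, A)$, the inductive step being $\up{gg'}{x} = \up g{(\up{g'}{x})}$; the inverse $t_\beta(r)^{-1}$ is again a root element of the same form, so $S^{-1}$ causes no trouble. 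By the Yoneda lemma this is just the abstract group-theoretic fact that the normalizer of a subgroup contains the subgroup generated by any family of elements each of which normalizes it.

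Now fix $\beta$ and $r$. By lemma \ref{elim-rel} applied with the distinguished root equal to $\beta$, the group $\elem'_{G, T, \Psi}(R, A)$ is generated by the images of the morphisms $z_\Sigma$ with $\Sigma$ ranging over the thick $\beta$-series. As conjugation is a group homomorphism, it suffices to show $\up{t_\beta(r)}{z_\Sigma(g, h)} \in \elem'_{G, T, \Psi}(R, A)$ for every thick $\beta$-series $\Sigma = \Psi \cap (\mathbb R_{> 0}\gamma + \mathbb R\beta)$ and all $g \in \elem_{G, T, -\Sigma}(R)$, $h \in \elem_{G, T, \Sigma}(A)$. Setting $g' = \up{t_\beta(r)}{g}$ and $h' = \up{t_\beta(r)}{h}$, we have $\up{t_\beta(r)}{z_\Sigma(g, h)} = \up{t_\beta(r)}{(\up g h)} = \up{g'}{h'}$, so everything comes down to the two claims that $t_\beta(r)$ carries $\elem_{G, T, \Sigma}(A)$ into itself and $\elem_{G, T, -\Sigma}(R)$ into itself. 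Granting these, $\up{t_\beta(r)}{z_\Sigma(g, h)} = z_\Sigma(g', h')$ lies in $\elem'_{G, T, \Psi}(R, A)$ by lemma \ref{z-sigma}, since a thick $\beta$-series is saturated special closed.

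The heart of the proof is thus this geometric stability, which I would establish via the Chevalley commutator formula. For a root $\delta \in \pm\Sigma$ write $\delta = c\gamma + d\beta$ with $c \neq 0$; then $\delta$ and $\beta$ are linearly independent, so conjugating $t_\delta(\cdot)$ by $t_\beta(r)$ yields $t_\delta(\cdot)$ together with commutator factors $t_{i\beta + j\delta}(f_{\beta\delta i j}(r, \cdot))$ for $i, j > 0$. The decisive observation is that $i\beta + j\delta = (i + jd)\beta + jc\gamma$ has $\gamma$-coordinate $jc$ of the same sign as $c$, so whenever $i\beta + j\delta$ is a root it lies on the same side of $\mathbb R\beta$ as $\delta$ and never on the line $\mathbb R\beta$ itself; hence the commutator factors again belong to $\Sigma$, respectively $-\Sigma$. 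For the arguments of the root morphisms, $f_{\beta\delta i j}$ is homogeneous of degrees $i \geq 1$ and $j \geq 1$ in its two variables: with both arguments in $R$ the value lies in $P_{i\beta + j\delta}(R)$, while with the second argument in $A$ the monomial contains a factor from the ideal $A \leqt A \rtimes R$, so the value lies in $P_{i\beta + j\delta}(A)$, the unital $R$-action on $A$ guaranteeing that these products make sense. This shows that each generating root element of $\elem_{G, T, \pm\Sigma}$ is sent into the respective group, proving the two claims.

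I expect the main difficulty to be bookkeeping rather than conceptual. In the multiply-laced and non-reduced cases several of $2\beta$, $2\delta$, $i\beta + j\delta$, $2(i\beta + j\delta)$ can simultaneously be roots, so that $t_\beta(r)$ carries both a $\beta$- and a $2\beta$-component and the commutator expansion produces additional factors; one must check that every such factor still has a $\gamma$-coordinate of the same sign as $\delta$, which is immediate from the same computation but requires care in enumerating the terms. A secondary point is to phrase the passage from generators to the whole group cleanly inside $\IP_{\mathbf D}$; as noted in the text this is purely formal via the Yoneda lemma.
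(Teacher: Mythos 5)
Your proof is correct and takes essentially the same approach as the paper: reduce to conjugation by a single root element \(t_\beta(r)\), use lemma \ref{elim-rel} to present \(\elem'_{G,T,\Psi}(R,A)\) as generated by the images of \(z_\Sigma\) over thick \(\beta\)-series, and conclude with lemma \ref{z-sigma}. The paper compresses this into two sentences, and your Chevalley-commutator verification that \(t_\beta(r)\) stabilizes \(\elem_{G,T,\Sigma}(A)\) and \(\elem_{G,T,-\Sigma}(R)\) (same sign of the \(\gamma\)-coordinate, positive degree in the second argument) is precisely the detail left implicit there, which the paper also uses tacitly in the proof of lemma \ref{elim-rel} itself.
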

\begin{proof}
    It suffices to check that for any root \(\alpha\) the subgroup
    \(
        t_\alpha(P_\alpha(R))
    \) normalizes
    \(
        \elem'_{G, T, \Psi}(R, A)
    \). This follows from lemma \ref{elim-rel}.
\end{proof}

\begin{lemma} \label{rel-expl}
    \(
        \elem^\prime_{G, T, \Psi}(R, A)
        = \elem_{G, T, \Psi}(R, A)
        \leq G(A)
    \).
\end{lemma}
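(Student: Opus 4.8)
The plan is to prove the two inclusions separately, using Lemma \ref{rel-elem} to present the right-hand side by generators and Lemma \ref{expl-conj} to transport the left-hand side across conjugation. The real content has already been packaged into the preceding lemmas, so both inclusions should come out formally.

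First I would check that \(\elem'_{G,T,\Psi}(R,A) \leq \elem_{G,T,\Psi}(R,A)\). By definition \(\elem'_{G,T,\Psi}(R,A)\) is generated by the images of the morphisms \(z_\alpha\), and a typical value \(z_\alpha(r,a) = \up{t_{-\alpha}(r)}{t_\alpha(a)}\) is the conjugate of the root element \(t_\alpha(a)\), with \(a \in P_\alpha(A)\), by the element \(t_{-\alpha}(r) \in \elem_{G,T,\Psi}(R)\). By Lemma \ref{rel-elem} every such conjugate lies in \(\elem_{G,T,\Psi}(R,A)\), so the image of each \(z_\alpha\), and hence the subgroup they generate, is contained in \(\elem_{G,T,\Psi}(R,A)\).

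For the reverse inclusion I would again invoke Lemma \ref{rel-elem}, which reduces matters to showing that each generator \(\up{g}{t_\alpha(a)}\), with \(g \in \elem_{G,T,\Psi}(R)\) and \(a \in P_\alpha(A)\), already lies in \(\elem'_{G,T,\Psi}(R,A)\). The key observation is that the bare root element \(t_\alpha(a)\) itself lies in \(\elem'_{G,T,\Psi}(R,A)\): precomposing \(z_\alpha\) with the zero section of \(P_{-\alpha}(R)\) yields \(z_\alpha(0,a) = \up{t_{-\alpha}(0)}{t_\alpha(a)} = \up{1}{t_\alpha(a)} = t_\alpha(a)\), since \(t_{-\alpha}\) is a homomorphism of the relevant (possibly \(2\)-step nilpotent) module objects and hence carries \(0\) to the group identity. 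Thus \(\Image(t_\alpha)\) is a subobject of \(\Image(z_\alpha)\), so \(t_\alpha(a) \in \elem'_{G,T,\Psi}(R,A)\). By Lemma \ref{expl-conj} the subgroup \(\elem_{G,T,\Psi}(R)\) normalizes \(\elem'_{G,T,\Psi}(R,A)\), whence \(\up{g}{t_\alpha(a)} \in \elem'_{G,T,\Psi}(R,A)\) for every \(g \in \elem_{G,T,\Psi}(R)\). This gives \(\elem_{G,T,\Psi}(R,A) \leq \elem'_{G,T,\Psi}(R,A)\) and finishes the argument.

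I expect essentially no remaining obstacle: the substantive work sits in Lemma \ref{expl-conj} (fed by Lemmas \ref{z-sigma} and \ref{elim-rel}). The only point that needs a moment's care is the identification \(t_\alpha(a) = z_\alpha(0,a)\) through the zero section, i.e. the fact that \(t_{-\alpha}(0)=1\) holds at the level of the ind-pro group objects; once this is in place, both inclusions are formal consequences of Lemmas \ref{rel-elem} and \ref{expl-conj}.
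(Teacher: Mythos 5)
Your proof is correct and follows essentially the same route as the paper: the forward inclusion is immediate since each \(z_\alpha(r,a) = \up{t_{-\alpha}(r)}{t_\alpha(a)}\) is one of the generators in Lemma \ref{rel-elem}, and the reverse inclusion combines Lemma \ref{rel-elem} with the normalization statement of Lemma \ref{expl-conj}. Your explicit verification that \(t_\alpha(a) = z_\alpha(0,a)\) via the zero section is a detail the paper leaves implicit, but it is the right justification.
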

\begin{proof}
    Clearly, the first subgroup is contained in the second one. Conversely, by lemma \ref{expl-conj} all generators of
    \(
        \elem_{G, T, \Psi}(R, A)
    \) from lemma \ref{rel-elem} lie in
    \(
        \elem'_{G, T, \Psi}(R, A)
    \).
\end{proof}

\begin{lemma} \label{rel-to-abs}
    If \(A\) is also power idempotent then
    \(
        \elem_{G, T, \Psi}(A)
        = \elem_{G, T, \Psi}(R, A)
    \).
\end{lemma}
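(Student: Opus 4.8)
The plan is to establish the two inclusions separately. The inclusion
\(
    \elem_{G, T, \Psi}(A) \leq \elem_{G, T, \Psi}(R, A)
\)
holds without the power idempotence of \(A\): for \(a \in P_\alpha(A)\) the element \(t_\alpha(a)\), regarded inside \(\elem_{G, T, \Psi}(A \rtimes R)\) via the ideal inclusion \(A \hookrightarrow A \rtimes R\), is sent to the identity by the projection onto \(\elem_{G, T, \Psi}(R)\) (since \(a\) maps to \(\dot 0\) in \(P_\alpha(R)\)), hence lies in the kernel \(\elem_{G, T, \Psi}(R, A)\); as these elements generate \(\elem_{G, T, \Psi}(A)\), the inclusion follows. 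For the reverse inclusion I would invoke lemma \ref{rel-elem}, which exhibits \(\elem_{G, T, \Psi}(R, A)\) as generated by the conjugates \(\up g{t_\alpha(a)}\) with \(g \in \elem_{G, T, \Psi}(R)\). Since conjugation by a product of root elements is the composition of the individual conjugations, it suffices to prove that conjugation by a single \(t_\gamma(r)\) carries \(\elem_{G, T, \Psi}(A)\) into itself, and for this it is enough to check that \(\up{t_\gamma(r)}{t_\alpha(a)} \in \elem_{G, T, \Psi}(A)\) for all roots \(\gamma, \alpha\), all \(r \in P_\gamma(R)\) and all \(a \in P_\alpha(A)\).

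If \(\gamma\) and \(\alpha\) are linearly independent, this is routine. By the Chevalley commutator formula
\[
    \up{t_\gamma(r)}{t_\alpha(a)}
    = [t_\gamma(r), t_\alpha(a)]\, t_\alpha(a)
    = \Bigl(
        \prod_{\substack{i \gamma + j \alpha \in \Psi \\ i, j > 0}}
            t_{i \gamma + j \alpha}(f_{\gamma \alpha i j}(r, a))
    \Bigr)\, t_\alpha(a).
\]
Each morphism \(f_{\gamma \alpha i j}\) vanishes when its second argument is \(\dot 0\), so \(f_{\gamma \alpha i j}(r, a)\) is sent to \(\dot 0\) by \(P_{i \gamma + j \alpha}(A \rtimes R) \to P_{i \gamma + j \alpha}(R)\) and therefore takes values in \(P_{i \gamma + j \alpha}(A)\). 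Hence every factor, and the displayed product, lies in \(\elem_{G, T, \Psi}(A)\).

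The one genuinely delicate case is when \(\gamma\) and \(\alpha\) are linearly dependent --- in particular the opposite-root case \(\gamma = -\alpha\) --- since then the commutator formula is unavailable and \([t_\gamma(r), t_\alpha(a)]\) involves the Levi subgroup. This is where I would use the hypothesis that \(A\) is power idempotent. The pair \((A, A)\), with \(A\) acting on itself by multiplication, satisfies the standing assumptions of this section: \(A\) is power idempotent and the self-action is unital because \(A\) is in particular idempotent. Lemma \ref{elim-abs}, applied to this pair, yields
\(
    \elem_{G, T, \Psi}(A)
    = \langle t_\beta(P_\beta(A)) \mid \beta \in \Psi \setminus \mathbb R \alpha \rangle
\).
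Thus \(t_\alpha(a) \in \elem_{G, T, \Psi}(A)\) can be expressed as a word in root elements \(t_\beta(a')\) with \(\beta \notin \mathbb R \alpha\) and \(a' \in P_\beta(A)\). Every such \(\beta\) is linearly independent from \(\gamma\) (because \(\gamma \in \mathbb R \alpha\)), so conjugating the word by \(t_\gamma(r)\) and applying the linearly independent case to each letter shows \(\up{t_\gamma(r)}{t_\alpha(a)} \in \elem_{G, T, \Psi}(A)\).

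Putting the two cases together, conjugation by each \(t_\gamma(r)\) preserves \(\elem_{G, T, \Psi}(A)\), hence so does conjugation by every element of \(\elem_{G, T, \Psi}(R)\); in particular every generator \(\up g{t_\alpha(a)}\) of \(\elem_{G, T, \Psi}(R, A)\) lies in \(\elem_{G, T, \Psi}(A)\), which gives \(\elem_{G, T, \Psi}(R, A) \leq \elem_{G, T, \Psi}(A)\) and the asserted equality. The main obstacle is precisely the linearly dependent case: the role of the power idempotence of \(A\) is to let lemma \ref{elim-abs} remove the whole line \(\mathbb R \alpha\) from the generating set, thereby trading the forbidden dependent conjugation for the harmless independent one. (The \(2\)-step nilpotent bookkeeping for \(\alpha \in \tfrac 1 2 \Psi\) is already subsumed in lemma \ref{elim-abs}, so it needs no separate treatment.)
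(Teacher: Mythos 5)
Your proof is correct and takes essentially the same route as the paper's: the paper reduces via Lemma \ref{rel-expl} (rather than Lemma \ref{rel-elem}) to the claim that \(\elem_{G, T, \Psi}(A)\) is normalized by each \(t_\gamma(P_\gamma(R))\), and then, exactly as you do, deduces this from Lemma \ref{elim-abs} applied to the power idempotent \(A\) (with its unital self-action) together with the Chevalley commutator formula for linearly independent roots. The only cosmetic difference is that starting from Lemma \ref{rel-elem} forces you to also extend the normalization from single root elements to all of \(\elem_{G, T, \Psi}(R)\), an easy step that the paper's use of Lemma \ref{rel-expl} makes unnecessary.
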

\begin{proof}
    By lemma \ref{rel-expl} it suffices to check that for any
    \(
        \alpha \in \Psi
    \) the subgroup
    \(
        \elem_{G, T, \Psi}(A)
    \) is normalized by
    \(
        t_\alpha(P_\alpha(R))
    \). This easy follows from lemma \ref{elim-abs}.
\end{proof}

\begin{lemma} \label{pin-elim}
    If
    \(
        (T', \Psi') \subseteq (T, \Psi)
    \) is another isotropic pinning, then
    \(
        \elem_{G, T', \Psi'}(R)
        \leq \elem_{G, T, \Psi}(R)
    \). If the rank of \((T', \Psi')\) is positive, then
    \(
        \elem_{G, T', \Psi'}(R)
        = \elem_{G, T, \Psi}(R)
    \).
\end{lemma}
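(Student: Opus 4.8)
Write $\pi\colon\Psi\cup\{0\}\to\Psi'\cup\{0\}$ for the factor-morphism induced by the containment $(T',\Psi')\subseteq(T,\Psi)$ (so that $T'\le T$), and abbreviate $E=\elem_{G,T,\Psi}(R)$ and $E'=\elem_{G,T',\Psi'}(R)$. Denote by $U_\alpha$ and $U'_\beta$ the root subgroups attached to $(T,\Psi)$ and to $(T',\Psi')$; by the product formula for root subgroups of contained pinnings one has $U'_\beta=\prod_{\pi(\alpha)\in\{\beta,2\beta\}}U_\alpha$ for every $\beta\in\Psi'$. Since $t_\beta(P_\beta(R))=U'_\beta(R)$, this formula shows that each generator $t_\beta(P_\beta(R))$ of $E'$ lies in $\langle U_\alpha(R)\mid\pi(\alpha)\in\{\beta,2\beta\}\rangle\le E$; letting $\beta$ range over $\Psi'$ gives the first assertion $E'\le E$, which uses neither the rank hypothesis nor rank $\ge2$ of $(T,\Psi)$.

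For the reverse inclusion under the positivity hypothesis it suffices to prove $t_\alpha(P_\alpha(R))\le E'$ for every $\alpha\in\Psi$. If $\pi(\alpha)\ne0$, then $\alpha$ occurs among the factors of $U'_{\pi(\alpha)}$, so $t_\alpha(P_\alpha(R))=U_\alpha(R)\le U'_{\pi(\alpha)}(R)\le E'$ at once. So assume $\pi(\alpha)=0$, i.e. $\alpha$ lies in $W:=\ker(\pi)\cap\mathbb R\Psi$. Here I would use the positivity of the rank of $(T',\Psi')$ in the form that $\pi$ annihilates no irreducible component of $\Psi$ (equivalently, $T'$ does not centralise any simple factor of $G/\Cent(G)$); in particular the component $\Psi^{(c)}$ containing $\alpha$ satisfies $\mathbb R\Psi^{(c)}\not\subseteq W$. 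A short linear-algebra argument then yields a hyperplane $\alpha\in H\le\mathbb R\Psi$ with $W\subseteq H$ but $\mathbb R\Psi^{(c)}\not\subseteq H$: one chooses $H/W$ to be a hyperplane of $\mathbb R\Psi/W$ not containing the nonzero image of $\mathbb R\Psi^{(c)}$, which exists precisely because $\Psi^{(c)}\not\subseteq W$.

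Applying lemma~\ref{root-gen} for this $H$ (legitimate since $(T,\Psi)$ has rank $\ge2$), the group $P_\alpha(R)$ is generated by the values $f_{\beta\gamma i1}(P_\beta(R),P_\gamma(R))$ with $\beta\in\Psi_1$ a neighbor of $\alpha$ and $i\beta+\gamma\in\{\alpha,2\alpha\}$; the inclusion $W\subseteq H$ forces every such $\beta$ to lie outside $W$, whence $\pi(\beta)\ne0$, and then $\pi(\gamma)=-i\pi(\beta)\ne0$ as well, while $\mathbb R\Psi^{(c)}\not\subseteq H$ guarantees that such neighbors exist, so the generation is not vacuous. Fix such $\beta,\gamma,i$ and $p\in P_\beta(R)$, $q\in P_\gamma(R)$; then $t_\beta(p)\in U'_{\pi(\beta)}(R)\le E'$ and $t_\gamma(q)\in U'_{\pi(\gamma)}(R)\le E'$, so $[t_\beta(p),t_\gamma(q)]\in E'$. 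I would expand this commutator by the Chevalley commutator formula, choosing the fixed order of the factors to be the \emph{angular order} of the roots $j\beta+k\gamma$ in the plane $\mathbb R\beta+\mathbb R\gamma$ (from $\beta$ towards $\gamma$). A root $j\beta+k\gamma$ is mapped by $\pi$ to $(j-ki)\pi(\beta)$, hence lies in $W$ exactly when $j=ki$, i.e. exactly for the collinear roots $\alpha$ and $2\alpha$; these occupy one contiguous block of the angular order, with the preceding and following blocks consisting of roots outside $W$. The factors in those two outer blocks lie in $E'$ (as $U_{j\beta+k\gamma}\le U'_{\pi(j\beta+k\gamma)}\le E'$), so their product on each side is in $E'$; consequently the middle block $t_\alpha(f_{\beta\gamma i1}(p,q))\,t_{2\alpha}(f_{\beta\gamma,2i,2}(p,q))$ also lies in $E'$. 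By the convention $f_{\beta\gamma,2i,2}=0$ whenever both $\alpha$ and $2\alpha$ are roots, the $2\alpha$-factor drops out, leaving $t_\alpha(f_{\beta\gamma i1}(p,q))\in E'$. Ranging over all admissible data and invoking lemma~\ref{root-gen} once more gives $t_\alpha(P_\alpha(R))\le E'$, and therefore $E=E'$.

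The main obstacle is the extraction step in the last paragraph: one must be sure that the $\alpha$-direction splits the commutator into three contiguous blocks whose outer two consist solely of $E'$-factors. This is what forces the careful choice of the hyperplane $H\supseteq W$ (guaranteeing that the outer roots avoid $W$) together with the angular ordering of the rank-$2$ subsystem (guaranteeing contiguity); the vanishing convention for $f_{\beta\gamma,2i,2}$ is exactly what kills the residual $2\alpha$-term. The remaining ingredients, namely the product formula for root subgroups and the linear-algebra choice of $H$, are routine.
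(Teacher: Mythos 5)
Your proposal is correct and takes essentially the same route as the paper: the inclusion $\elem_{G,T',\Psi'}(R)\leq\elem_{G,T,\Psi}(R)$ and the case $\pi(\alpha)\neq 0$ (where $t_\alpha$ factors through $t_{\pi(\alpha)}$) are handled identically, and for $\pi(\alpha)=0$ the paper also reduces to Lemma~\ref{root-gen}. The paper's proof is just far terser --- it cites Lemma~\ref{root-gen} without spelling out the choice of hyperplane containing $\ker\pi$ or the commutator-extraction step, both of which you work out correctly.
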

\begin{proof}
    Clearly,
    \(
        \elem_{G, T', \Psi'}(R)
        \leq \elem_{G, T, \Psi}(R)
    \). If
    \(
        \alpha \in \Psi
    \) has non-zero image
    \(
        \alpha' \in \Psi'
    \), then \(t_\alpha\) factors through \(t_{\alpha'}\). Finally, if
    \(
        \alpha \in \Psi
    \) has zero image in
    \(
        \Psi' \cup \{0\}
    \), then
    \(
        t_\alpha(P_\alpha(R))
        \leq \elem_{G, T', \Psi'}(R)
    \) by lemma \ref{root-gen}.
\end{proof}

\begin{lemma} \label{xmod-dec}
    Suppose that
    \(
        \delta \colon A \to R
    \) is a crossed module in
    \(
        \IPRng_{\mathbf D, s}
    \) and
    \(
        A = \sum_{i = 1}^n A_i
    \) for crossed
    \(
        (R \rtimes K_s^{\Ind})
    \)-submodules \(A_i\) unital over \(R\). Then
    \(
        \elem_{G, T, \Psi}(R, A)
        = \prod_{i = 1}^n \elem_{G, T, \Psi}(R, A_i)
    \). If
    \(
        P \leq G_s
    \) is a parabolic subgroup, then also
    \(
        \uradical(P)(A)
        = \prod_{i = 1}^n \uradical(P)(A_i)
    \), where
    \(
        \uradical(P)
    \) is the unipotent radical of \(P\).
\end{lemma}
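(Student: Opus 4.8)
We suppress the subscripts $G, T, \Psi$ and write $\elem(R, A)$ for $\elem_{G, T, \Psi}(R, A)$ and $\elem(A)$ for $\elem_{G, T, \Psi}(A)$; by the Yoneda lemma we argue with elements of these group objects as in ordinary group theory. The crossed module axioms give, for $a \in A_i$ and $a' \in A_j$, that $aa' = \delta(a)\,a' \in A_j$ and $aa' = a'a = \delta(a')\,a \in A_i$, hence
\[
    A_i A_j \subseteq A_i \cap A_j;
\]
more generally any product of elements of the $A_i$ involving factors from two distinct submodules lies in $A_i \cap A_j$. A finite sum of crossed $(R \rtimes K_s^{\Ind})$-submodules unital over $R$ is again one, so by induction on $n$ the first assertion reduces to the case $n = 2$, $A = B + C$, where the claim is that the image of the product morphism $\elem(R, B) \times \elem(R, C) \to G(A)$ equals $\elem(R, A)$.

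First I show $\elem(R, A) = \langle \elem(R, B), \elem(R, C) \rangle$ by decomposing root subgroups. For $\alpha \in \Psi \setminus \frac12\Psi$ the object $P_\alpha$ is a vector group, so $P_\alpha(A) = P_\alpha(B) + P_\alpha(C)$ and $t_\alpha$ is a homomorphism. For $\alpha \in \Psi \cap \frac12\Psi$ the splitting $t_\alpha$ presents $P_\alpha(A)$ as $\mathbb G_\add^{m_{2\alpha}}(A) \dotoplus \mathbb G_\add^{m_\alpha}(A)$ through a $K_s$-bilinear $2$-cocycle; the product map $P_\alpha(B) \times P_\alpha(C) \to P_\alpha(A)$, $(b, c) \mapsto b \dotplus c$, is then surjective, since it is surjective on the quotient $A^{m_\alpha} = B^{m_\alpha} + C^{m_\alpha}$ and on the subgroup $A^{m_{2\alpha}} = B^{m_{2\alpha}} + C^{m_{2\alpha}}$, the only discrepancy being a cocycle value lying in $(B \cap C)^{m_{2\alpha}} \subseteq P_{2\alpha}(B)$. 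In every case each $a \in P_\alpha(A)$ factors as $t_\alpha(a) = t_\alpha(b)\,t_\alpha(c)$ with $b \in P_\alpha(B)$, $c \in P_\alpha(C)$. By lemma \ref{rel-elem} the group $\elem(R, A)$ is generated by the elements $\up{g}{t_\alpha(a)}$ with $g \in \elem(R)$, and $\up{g}{t_\alpha(a)} = \up{g}{t_\alpha(b)}\,\up{g}{t_\alpha(c)} \in \elem(R, B)\,\elem(R, C)$. Thus the two relative groups generate $\elem(R, A)$, and it remains only to see that their product is already a subgroup.

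For this it suffices that $\elem(R, C)$ normalize $\elem(R, B)$, and this will follow from the cross-commutator estimate
\[
    [\elem(R, B), \elem(R, C)] \subseteq \elem(R, BC) \subseteq \elem(R, B) \cap \elem(R, C),
\]
the last inclusion holding because $BC \subseteq B \cap C$ and $\elem(R, -)$ is monotone in its second argument. The root-element case reads $[t_\alpha(b), t_\beta(c)] \in \elem(R, BC)$ for $b \in P_\alpha(B)$, $c \in P_\beta(C)$. If $\alpha, \beta$ are linearly independent the Chevalley commutator formula expands this through the $t_{k\alpha + l\beta}(f_{\alpha\beta kl}(b, c))$ with $k, l \geq 1$; as $f_{\alpha\beta kl}$ is homogeneous of degree $k$ in $b$ and $l$ in $c$, its value lies in $P_{k\alpha + l\beta}(B^k C^l) \subseteq P_{k\alpha+l\beta}(BC)$, so the commutator lies in $\elem(BC) \leq \elem(R, BC)$; the case $\beta = \alpha$ is identical, the bracket in the $2$-step nilpotent module $U_\alpha$ being bilinear and valued in $P_{2\alpha}(BC)$. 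The one genuinely delicate case is $\beta = -\alpha$, where opposite root elements admit no Chevalley expansion. Here I first apply lemma \ref{root-gen}: fixing a hyperplane $H \ni \alpha$, the element $b$ becomes a product of values $f_{\gamma\delta i 1}(p, q)$ with $p \in P_\gamma(R)$, $q \in P_\delta(B)$ and $\gamma$ a neighbour of $\alpha$ on one side of $H$, so that $\gamma$, $\delta$ and $-\alpha$ are pairwise linearly independent. Substituting and expanding by the Hall--Witt identities reduces $[t_\alpha(b), t_{-\alpha}(c)]$ to commutators of root elements attached to pairwise linearly independent roots and carrying at least one factor from $B$ and one from $C$; these land in $\elem(R, BC)$ by the previous case, and the rank $\geq 2$ hypothesis is exactly what guarantees the requisite neighbours. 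Promoting the root-element estimate to the displayed estimate is then a commutator-collection argument using that $\elem(R)$ normalizes $\elem(R, B)$, $\elem(R, C)$ and $\elem(R, BC)$ (lemma \ref{expl-conj}) together with the identity $[\up{g}{x}, \up{h}{y}] = \up{g}{[x, \up{g^{-1}h}{y}]}$. This opposite-root reduction is the main obstacle; once the estimate holds, $\elem(R, B)\,\elem(R, C)$ is a subgroup, settling the case $n = 2$ and hence the first assertion.

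For the unipotent radical, $\uradical(P)$ is a split unipotent $K_s$-group which, after an fppf extension and descent, admits a filtration by normal subgroups whose successive quotients are the root subgroups $U_\gamma$ for $\gamma$ ranging over the one-sided set of roots of $P$. This is precisely the special case of the computation above in which no two of the occurring roots are opposite: the root-subgroup decomposition $P_\gamma(A) = \prod_i P_\gamma(A_i)$ holds as in the second paragraph, and by the Chevalley formula every cross-commutator between $U_\gamma(A_i)$ and $U_{\gamma'}(A_j)$ lands strictly deeper in the filtration, with entries in $A_i A_j \subseteq A_i \cap A_j$. Hence no delicate opposite-root case arises, mutual normalization is automatic, and an induction up the filtration — decomposing each graded piece and splitting the extensions exactly as before — gives $\uradical(P)(A) = \prod_{i=1}^n \uradical(P)(A_i)$.
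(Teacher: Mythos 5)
Your second paragraph (decomposing the generators $\up g{t_\alpha(a)}$ via $t_\alpha(P_\alpha(A)) \leq \prod_i t_\alpha(P_\alpha(A_i))$ and lemma \ref{rel-elem}) agrees with the paper, but the core of your argument --- the estimate $[\elem(R,B),\elem(R,C)] \subseteq \elem(R,BC)$ established by root-element calculus --- is where the proof stops being a proof. Write $\rho$ for the root you call $\delta$, keeping $\delta$ for the crossed-module map. First, $t_\alpha(f_{\gamma\rho i1}(p,q))$ is only one Chevalley factor of $[t_\gamma(p),t_\rho(q)]$, so ``substituting'' already forces you to split off the remaining factors. Second, Hall--Witt does not eliminate opposite pairs: with $x = t_\gamma(p)$, $y = t_\rho(q)$, $z = t_{-\alpha}(c)$ and $\rho = \alpha - i\gamma$, the inner commutator $[y,z^{-1}]$ has Chevalley components along the roots $k\rho - l\alpha = (k-l)\alpha - ki\gamma$, and for $k=l$ these lie on $\mathbb R_{<0}\gamma$ (for $i=k=l=1$ literally $-\gamma$); the outer commutation against $x = t_\gamma(p)$ is then again an opposite-root commutator. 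Third, the ``promotion'' by commutator collection needs conjugating elements from $\elem(R,B)$ and $\elem(B)$ (not from $\elem(R)$) to normalize $\elem(R,BC)$ --- e.g.\ $[uv,w] = \up u{[v,w]}\,[u,w]$ puts conjugates $\up u{(\cdot)}$ with $u \in t_\alpha(P_\alpha(B))$ in front of elements of $\elem(R,BC)$ --- and that is a statement of exactly the kind you are trying to prove, so the argument is circular. Relative commutator formulas of this type are genuine theorems, not byproducts of collection.

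What you missed is that the crossed-module hypothesis is in the statement precisely so that no commutator calculus is needed; this is the content of the paper's one-line justification. The axioms $\delta(ra) = r\delta(a)$, $aa' = \delta(a)a'$ make $\delta \rtimes \id \colon A \rtimes R \to R$ an algebra homomorphism, and for any affine group scheme the induced map $\delta_* \colon G(A) \to G(R)$ together with the conjugation action is a crossed module of groups, i.e.\ $\up xy = \up{\delta_*(x)}y$ for $x,y \in G(A)$ (verify on $\glin_n$, where both sides equal $1 + b + \delta(a)b + b\delta(a') + \delta(a)b\delta(a')$, and embed, or argue functorially). Since $\delta_*$ carries the generators $\up g{t_\alpha(a)}$ of $\elem(R,A)$ into $\elem(R)$, conjugation of any subgroup of $G(A)$ by $\elem(R,A)$ factors through conjugation by $\elem(R)$; as $\elem(R)$ normalizes each $\elem(R,A_i)$ by lemmas \ref{expl-conj} and \ref{rel-expl}, we get $\elem(R,A_i) \leqt \elem(R,A)$ at once, the Minkowski product is a subgroup, and your generator decomposition finishes the first claim (your bound $[\elem(R,B),\elem(R,C)] \subseteq \elem(R,B) \cap \elem(R,C)$ then also drops out for free, with no mention of $\elem(R,BC)$). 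Your treatment of $\uradical(P)$ has a related flaw: a parabolic subgroup of the possibly non-split $G_s$ admits no root-group filtration over the base, and an equality of subgroups of $A$-points proved after an fppf extension does not descend. The paper instead uses the filtration of \cite[Exp.\ XXVI, proposition 2.1]{sga3} by $P$-invariant closed smooth subschemes with vector-bundle quotients, together with scheme sections of $U_k \to U_k/U_{k+1}$, which reduces everything to $e(A^N) = \sum_i e(A_i^N)$ for an idempotent matrix $e \in \mat(N,K_s)$ --- immediate from $A = \sum_i A_i$.
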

\begin{proof}
    Note that
    \(
        \elem_{G, T, \Psi}(R, A_i)
        \leqt \elem_{G, T, \Psi}(R, A)
    \) since this subgroup is normalized by
    \(
        \elem_{G, T, \Psi}(R)
    \), so the Minkowski product
    \(
        \prod_{i = 1}^n \elem_{G, T, \Psi}(R, A_i)
    \) is a subgroup of
    \(
        \elem_{G, T, \Psi}(A)
    \). On the other hand,
    \(
        t_\alpha(P_\alpha(A))
        \leq \prod_{i = 1}^n
            t_\alpha(P_\alpha(A_i))
    \) for any \(\alpha \in \Psi\).

    For unipotent radicals there is a finite nilpotent filtration
    \(
        \uradical(P) = U_1
        \geq U_2
        \geq \ldots
        \geq U_m = 0
    \) such that
    \(
        U_k \leq \uradical(P)
    \) are closed smooth \(P\)-invariant subschemes,
    \(
        [U_k, U_l] \leq U_{k + l}
    \) (i.e. the commutator restricted to
    \(
        U_k \times U_l
    \) takes values in
    \(
        U_{k + l}
    \)), and
    \(
        U_k / U_{k + 1}
    \) is isomorphic to a vector bundle over
    \(
        \mathcal D(s)
    \) \cite[Exp. XXVI, proposition 2.1]{sga3}. By \cite[Exp. XXVI, the proof of corollary 2.5]{sga3} the morphisms
    \(
        U_k \to U_k / U_{k + 1}
    \) have scheme sections, so
    \(
        (U_k / U_{k + 1})(A)
        \cong U_k(A) / U_{k + 1}(A)
    \) for every \(k\). It remains to prove that
    \(
        V(A) = \sum_i V(A_i)
    \) for any vector bundle \(V\) over
    \(
        \mathcal D(s)
    \). In other words, if
    \(
        e \in \mat(N, K_s)
    \) is an idempotent for some
    \(
        N \geq 0
    \), then
    \(
        e(A^N) = \sum_i e(A_i^N)
    \). This easily follows from
    \(
        A = \sum_i A_i
    \).
\end{proof}

\section{Locally isotropic elementary groups}

In this section \(G\) is a reductive group scheme over \(K\) of local isotropic rank at least \(2\). Choose a Zariski covering
\(
    \Spec(K) = \bigcup_{i = 1}^N \mathcal D(a_i)
\) for some
\(
    a_i \in K
\) (i.e. \(a_i\) generate the unit ideal) and isotropic pinnings
\(
    (T_i, \Psi_i)
\) of
\(
    G_{a_i}
\) of rank \(\geq 2\). Such data exist since the isotropic rank of \(G\) is at least \(2\) at every point of
\(
    \Spec(K)
\) and an isotropic pinning of
\(
    G_{\mathfrak p}
\) for a prime ideal
\(
    \mathfrak p \leqt K
\) may be continued to \(G_s\) for some
\(
    s \in K \setminus \mathfrak p
\). For any unital \(K\)-algebra \(R\) the \textit{elementary subgroup} is
\[
    \elem_G(R) = \langle
        \Image(
            \elem_{G, T_i, \Psi_i}(R^{(a_i^\infty)})
            \to G(R)
        )
    \mid
        1 \leq i \leq N
    \rangle,
\]
it is a group object in
\(
    \IP_{\{*\}} = \Ind(\Pro(\Set))
\). We are going to show that it is independent on the choices and is actually a normal subgroup of \(G(R)\). In order to study functorial properties of
\(
    \elem_G
\) let also
\[
    \elem_G(\mathcal R) = \langle
        \Image(
            \elem_{G, T_i, \Psi_i}(
                \mathcal R^{(a_i^\infty)}
            ) \to G(\mathcal R)
        )
    \mid
        1 \leq i \leq N
    \rangle
\]
as a group object in
\(
    \IP_{\Ring_K^\fp}
\), so \(\elem_G(R)\) is the restriction of
\(
    \elem_G(\mathcal R)
\) to not necessarily finitely presented \(R\).

\begin{lemma} \label{levi-action}
    Let
    \(
        s \in K
    \),
    \(
        (T, \Psi)
    \) be an isotropic pinning of \(G_s\), and \(L\) be the centralizer of \(T\) in \(G_s\). Then the subgroup
    \(
        \gelem_{G, T, \Psi}(\mathcal R_s^{\Ind})
        = L(\mathcal R_s^{\Ind})
        \elem_{G, T, \Psi}(\mathcal R_s^{\Ind})
        \leq G(\mathcal R_s^{\Ind})
    \) normalizes
    \(
        \elem_{G, T, \Psi}(\mathcal R^{(s^\infty)})
    \) and
    \(
        \elem_{G, T, \Psi}(
            \mathcal R^{(s^\infty)},
            \mathcal A^{(s^\infty)}
        )
    \) for any functor
    \(
        \mathcal A \colon \Ring_K \to \Rng_K
    \) commuting with direct limits and with a structure of an \(\mathcal R\)-algebra.
\end{lemma}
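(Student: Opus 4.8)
The plan is to reduce both assertions to a single normalization statement about the relative elementary group, and then to handle the generators of \(\gelem_{G, T, \Psi}(\mathcal R_s^{\Ind})\) using the thick \(\alpha\)-series decomposition adapted to the conjugating root. Since \(L\) normalizes every \(U_\alpha\), it normalizes \(\elem_{G, T, \Psi}\), so \(\gelem_{G, T, \Psi}(\mathcal R_s^{\Ind}) = L(\mathcal R_s^{\Ind})\,\elem_{G, T, \Psi}(\mathcal R_s^{\Ind})\) is a subgroup generated by \(L(\mathcal R_s^{\Ind})\) together with the root elements \(t_\alpha(r)\) for \(r \in P_\alpha(\mathcal R_s^{\Ind})\), \(\alpha \in \Psi\). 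By lemma \ref{rel-to-abs} (using the power idempotence of \(\mathcal R^{(s^\infty)}\) from lemma \ref{power-idem}) the absolute group \(\elem_{G, T, \Psi}(\mathcal R^{(s^\infty)})\) equals \(\elem_{G, T, \Psi}(\mathcal R^{(s^\infty)}, \mathcal R^{(s^\infty)})\), i.e. it is the value of the second assertion at \(\mathcal A = \mathcal R\). Hence it suffices to prove that \(\gelem_{G, T, \Psi}(\mathcal R_s^{\Ind})\) normalizes \(E := \elem_{G, T, \Psi}(\mathcal R^{(s^\infty)}, \mathcal A^{(s^\infty)})\). All the relevant elements live in \(G(B \rtimes \mathcal R_s^{\Ind})\) for \(B = \mathcal A^{(s^\infty)} \rtimes \mathcal R^{(s^\infty)}\), where \(\mathcal R^{(s^\infty)}\) and \(\mathcal A^{(s^\infty)}\) are ideals absorbing multiplication by \(\mathcal R_s^{\Ind}\); this is what makes conjugation of \(E \leq G(\mathcal A^{(s^\infty)})\) by \(G(\mathcal R_s^{\Ind})\) meaningful.

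Fix \(\alpha \in \Psi\). By lemma \ref{elim-rel} the group \(E\) is generated by the images of the maps \(z_\Sigma\), with \(z_\Sigma(g, h) = \up g h\) for \(g \in \elem_{G, T, -\Sigma}(\mathcal R^{(s^\infty)})\) and \(h \in \elem_{G, T, \Sigma}(\mathcal A^{(s^\infty)})\), as \(\Sigma\) runs over the thick \(\alpha\)-series. Consequently, to show that a generator \(w \in \gelem_{G, T, \Psi}(\mathcal R_s^{\Ind})\) normalizes \(E\) it is enough to check that \(w\) normalizes both \(\elem_{G, T, -\Sigma}(\mathcal R^{(s^\infty)})\) and \(\elem_{G, T, \Sigma}(\mathcal A^{(s^\infty)})\) for every thick \(\alpha\)-series \(\Sigma\): then \(\up w {z_\Sigma(g, h)} = z_\Sigma(\up w g, \up w h)\) again lies in the image of \(z_\Sigma\).

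For \(w = l \in L(\mathcal R_s^{\Ind})\) this is immediate, since \(L\) preserves each root subgroup and acts on each coordinate module \(P_\gamma\) by an \(\mathcal R_s^{\Ind}\)-linear transformation, which carries the ideals \(\mathcal R^{(s^\infty)}\) and \(\mathcal A^{(s^\infty)}\) into themselves; thus \(l\) normalizes \(\elem_{G, T, \Sigma'}\) for every closed \(\Sigma'\) and every coefficient module. For \(w = t_\alpha(r)\) with \(r \in P_\alpha(\mathcal R_s^{\Ind})\) I use the thick \(\alpha\)-series for this same \(\alpha\). Each \(\gamma \in \pm\Sigma\) is linearly independent of \(\alpha\) (it has nonzero component along the generating ray of \(\Sigma\)), so the Chevalley commutator formula applies to \([t_\alpha(r), t_\gamma(c)]\): every factor is \(t_{i\alpha + j\gamma}(f_{\alpha\gamma ij}(r, c))\) with \(i, j > 0\). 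Such \(i\alpha + j\gamma\) has the same sign of its ray component as \(\gamma\), so it again lies in \(\Sigma\) (resp. \(-\Sigma\)) and in particular is never \(\pm\alpha\); and because \(f_{\alpha\gamma ij}\) is of degree \(j \geq 1\) in the second argument, its value lies in whichever ideal \(c\) comes from. Hence \(t_\alpha(r)\) normalizes both \(\elem_{G, T, \Sigma}(\mathcal A^{(s^\infty)})\) and \(\elem_{G, T, -\Sigma}(\mathcal R^{(s^\infty)})\), which completes the argument.

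The main obstacle is the opposite-root interaction \(\gamma = -\alpha\), where no Chevalley formula is available and the Levi factor could intervene; this is precisely what adapting the thick \(\alpha\)-series to the conjugating root \(\alpha\) circumvents, since \(\pm\alpha\) have vanishing ray component and so never appear in any \(\pm\Sigma\) nor in any of the commutators produced above. The only remaining input is the ind-pro bookkeeping of section \(4\) — that \(\mathcal R^{(s^\infty)}\) and \(\mathcal A^{(s^\infty)}\) are genuinely ideals of \(B \rtimes \mathcal R_s^{\Ind}\) absorbing \(\mathcal R_s^{\Ind}\)-multiplication, so that arguments of positive degree in a colocalization element land back in that colocalization — which is routine from the module structures \(\tfrac{k}{s^n}\cdot a^{(s^{n+m})} = (ak)^{(s^m)}\).
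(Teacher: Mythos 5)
Your proof is, at its core, the paper's own argument: for conjugators \(t_\alpha(r)\) with \(r \in P_\alpha(\mathcal R_s^{\Ind})\) the paper likewise reduces to the thick \(\alpha\)-series generators of lemma \ref{elim-rel} and concludes by the Chevalley commutator formula exactly as you do (note that you tacitly use lemma \ref{rel-expl} when you apply lemma \ref{elim-rel} to \(E = \elem_{G, T, \Psi}(\mathcal R^{(s^\infty)}, \mathcal A^{(s^\infty)})\) rather than to \(\elem'_{G, T, \Psi}\); that identification should be cited). Two differences are worth recording. First, you fold the absolute statement into the relative one via lemma \ref{rel-to-abs} with \(\mathcal A = \mathcal R\) (legitimate by lemma \ref{power-idem}), whereas the paper treats the absolute group separately through lemma \ref{elim-abs}; the two reductions are interchangeable. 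Second --- and this is the one step you should repair --- your justification of the Levi case asserts that \(L\) acts on each coordinate module \(P_\gamma\) by an \(\mathcal R_s^{\Ind}\)-linear transformation. That is false for multipliable (ultrashort) roots of a \(\mathsf{BC}\)-component: there \(P_\gamma = \mathbb G_\add^{m_{2\gamma}} \dotoplus \mathbb G_\add^{m_\gamma}\), and conjugation by \(L\) is an automorphism of the \(2\)-step nilpotent module structure whose component into the \(P_{2\gamma}\)-part is quadratic, not linear, in the \(P_\gamma / P_{2\gamma}\)-coordinates. The paper's proof avoids any linearity claim: it uses only that conjugation is given by pointed scheme morphisms \(h_{\gamma k} \colon L \times \mathbb A^1 \to P_\gamma\) and extends these to morphisms \(L(\mathcal R_s^{\Ind}) \times \mathcal A^{(s^\infty)} \to P_\gamma(\mathcal A^{(s^\infty)})\) by taking levelwise limits of a commutative diagram. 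Since this is precisely the ``positive degree'' principle you invoke in your closing paragraph, the repair costs nothing, but as written the linearity assertion is a gap in the \(\mathsf{BC}\) case.
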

\begin{proof}
    For
    \(
        t_\alpha(P_\alpha(\mathcal R^{\Ind}_s))
    \) the claim follows from lemmas \ref{elim-abs}, \ref{elim-rel}, \ref{rel-expl}. For every
    \(
        \alpha \in \Psi
    \) and a basis vector
    \(
        e_k \in P_\alpha
    \) there is a scheme morphism
    \(
        h_{\alpha k}
        \colon L \times \mathbb A^1
        \to P_\alpha
    \) such that
    \[
        \up g{t_\alpha(e_k \cdot r)}
        = t_\alpha(h_{\alpha k}(g, r)).
    \]
    Moreover,
    \(
        h_{\alpha k}(g, 1) = 1
    \). Taking limits of the columns of the commutative diagram
    \[\xymatrix@R=30pt@C=150pt@!0{
        L(\mathcal R^{\Ind}_s)
        \times (
            \mathcal A^{(s^\infty)}
            \rtimes \mathcal R^{\Ind}_s
        )
        \ar^(0.55){h_{\alpha k}}[r]
        \ar[d]
        &
        P_\alpha(
            \mathcal A^{(s^\infty)}
            \rtimes \mathcal R^{\Ind}_s
        )
        \ar[d]
        \\
        L(\mathcal R^{\Ind}_s)
        \times \mathcal R^{\Ind}_s
        \ar^(0.55){h_{\alpha k}}[r]
        &
        P_\alpha(\mathcal R^{\Ind}_s)
        \\
        L(\mathcal R^{\Ind}_s)
        \times \{*\}
        \ar[r]
        \ar[u]
        &
        \{*\},
        \ar[u]
    }\]
    we obtain a morphism
    \(
        h_{\alpha k}
        \colon L(\mathcal R_s^{\Ind})
        \times \mathcal A^{(s^\infty)}
        \to P_\alpha(\mathcal A^{(s^\infty)})
    \) such that
    \[
        \up g{t_\alpha(e_k \cdot a)}
        = t_\alpha(h_{\alpha k}(g, a))
    \]
    for
    \(
        g \in L(\mathcal R_s^{\Ind})
    \) and
    \(
        a \in \mathcal A^{(s^\infty)}
    \).
\end{proof}

The next lemma shows that
\(
    \elem_G(\mathcal R)
\) is independent of the Zariski covering and the isotropic pinnings
\(
    (T_i, \Psi_i)
\). Moreover, for any
\(
    R \in \Ring_K
\) and
\(
    g \in \Aut(G)(R)
\) the subgroup
\(
    \elem_G(R) \leq G(R)
\) is \(g\)-invariant (this does not imply the normality since
\(
    \elem_G(R)
\) is a group object in
\(
    \Ind(\Pro(\Set))
\) rather than \(\Set\)). We denote the unipotent radical of a parabolic subgroup
\(
    P \leq G
\) by
\(
    \uradical(P)
\).

\begin{lemma} \label{invariance}
    Let
    \(
        s \in K
    \) and \(P\) be a parabolic subgroup of \(G_s\). Then the image of
    \(
        \uradical(P)(\mathcal R^{(s^\infty)})
    \) in
    \(
        G(\mathcal R)
    \) is contained in
    \(
        \elem_G(\mathcal R)
    \). In particular, if
    \(
        (T, \Psi)
    \) is an isotropic pinning of \(G_s\), then the image of
    \(
        \elem_{G, T, \Psi}(\mathcal R^{(s^\infty)})
    \) in
    \(
        G(\mathcal R)
    \) is contained in
    \(
        \elem_G(\mathcal R)
    \).
\end{lemma}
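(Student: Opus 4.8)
The plan is to prove the assertion for an arbitrary parabolic $P$ and then read off the ``in particular'' part at the end: every root subgroup $U_\alpha$ of $(T,\Psi)$ sits inside $\uradical(P_G(\lambda))$ for a constant coweight $\lambda$ with $\langle\alpha,\lambda\rangle>0$, so once the main claim is known, the image of each $t_\alpha(P_\alpha(\mathcal R^{(s^\infty)}))$ lies in $\elem_G(\mathcal R)$, hence so does the image of the subgroup $\elem_{G,T,\Psi}(\mathcal R^{(s^\infty)})$ they generate.

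For the main claim I would first build a suitable finite refinement of the given covering that puts $P$ into standard position. Over the local ring at each point of $\mathcal D(s)$, Lemma \ref{loc-iso-pin} supplies a maximal isotropic pinning, the fact that $P$ is a standard parabolic relative to some maximal pinning, and the fact that any two maximal pinnings are conjugate by an element of $U^+U^-U^+$. Spreading these data out (everything is of finite presentation) and shrinking, I obtain a finite cover $\Spec(K_s)=\bigcup_j\mathcal D(t_j)$ such that for each $j$: the open $\mathcal D(t_j)$ is contained in some $\mathcal D(a_{i(j)})$, so $(T_{i(j)},\Psi_{i(j)})$ restricts to a pinning of rank $\geq 2$ over $\mathcal D(t_j)$; there is a maximal isotropic pinning $(\tilde T_j,\tilde\Psi_j)$ of $G_{t_j}$ containing this restriction; and there is $g_j\in\tilde U^+\tilde U^-\tilde U^+(K_{t_j})$ with $\uradical(P)_{t_j}=\up{g_j}{\uradical(Q_j)}$, where $Q_j$ is a standard parabolic of $(\tilde T_j,\tilde\Psi_j)$, so that $\uradical(Q_j)$ is a product of root subgroups $U_\beta$, $\beta\in\tilde\Psi_j$.

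Write $A_j=\Image(\mathcal R^{(t_j^\infty)}\to\mathcal R^{(s^\infty)})$. By Lemma \ref{cozariski} we have $\mathcal R^{(s^\infty)}=\sum_j A_j$, and Lemma \ref{xmod-dec} then gives $\uradical(P)(\mathcal R^{(s^\infty)})=\prod_j\uradical(P)(A_j)$; since each factor is the image of $\uradical(P)(\mathcal R^{(t_j^\infty)})$, it suffices to treat a fixed $j$. Conjugation by $g_j$ is a scheme isomorphism $\uradical(Q_j)\to\uradical(P)$, so on $\mathcal R^{(t_j^\infty)}$-points $\uradical(P)(\mathcal R^{(t_j^\infty)})=\up{g_j}{(\uradical(Q_j)(\mathcal R^{(t_j^\infty)}))}$. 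As $\uradical(Q_j)$ is a product of root subgroups, $\uradical(Q_j)(\mathcal R^{(t_j^\infty)})\leq\elem_{G,\tilde T_j,\tilde\Psi_j}(\mathcal R^{(t_j^\infty)})$, which equals $\elem_{G,T_{i(j)},\Psi_{i(j)}}(\mathcal R^{(t_j^\infty)})$ by Lemma \ref{pin-elim}, the smaller pinning having positive rank. Moreover $g_j\in\tilde U^+\tilde U^-\tilde U^+(\mathcal R^{\Ind}_{t_j})\leq\gelem_{G,\tilde T_j,\tilde\Psi_j}(\mathcal R^{\Ind}_{t_j})$, which normalizes $\elem_{G,\tilde T_j,\tilde\Psi_j}(\mathcal R^{(t_j^\infty)})$ by Lemma \ref{levi-action}; hence the conjugated subgroup still lies in $\elem_{G,T_{i(j)},\Psi_{i(j)}}(\mathcal R^{(t_j^\infty)})$. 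Mapping along $\mathcal R^{(t_j^\infty)}\to\mathcal R^{(a_{i(j)}^\infty)}$ (available since $\mathcal D(t_j)\subseteq\mathcal D(a_{i(j)})$) and then to $G(\mathcal R)$, its image factors through the image of $\elem_{G,T_{i(j)},\Psi_{i(j)}}(\mathcal R^{(a_{i(j)}^\infty)})$, which lies in $\elem_G(\mathcal R)$ by definition.

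The main obstacle is the geometric standardization of the second paragraph: extracting from Lemma \ref{loc-iso-pin}, over each local ring, a conjugator carrying $P$ to a standard parabolic of a maximal pinning that refines one of the chosen $(T_i,\Psi_i)$, with the conjugator lying in the product of unipotent radicals (so that it belongs to the elementary group and Lemma \ref{levi-action} applies), and then spreading all of this out to a finite cover by principal opens compatibly with the maps into $G(\mathcal R)$. Everything else is an orchestration of Lemmas \ref{cozariski}, \ref{xmod-dec}, \ref{pin-elim}, and \ref{levi-action}, together with the care needed to keep the distinction between the colocalization $\mathcal R^{(t_j^\infty)}$ and the ind-localization $\mathcal R^{\Ind}_{t_j}$ straight throughout.
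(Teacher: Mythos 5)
Your proposal is correct and follows essentially the same route as the paper's proof: refine the cover so that, after spreading out the semilocal data from Lemma \ref{loc-iso-pin}, $\uradical(P)$ is locally an elementary conjugate of a unipotent radical attached to a pinning containing the restriction of some $(T_i,\Psi_i)$, then apply Lemmas \ref{pin-elim} and \ref{levi-action} to absorb the conjugation, and glue with Lemmas \ref{cozariski} and \ref{xmod-dec}. The only cosmetic difference is that the paper works with a containment $P|_{\mathcal D(b_{ij})}\supseteq\up{g_{ij}}{P_{ij}}$ (so $\uradical(P)\subseteq\up{g_{ij}}{\uradical(P_{ij})}$) rather than your equality with a standard parabolic, and it reads off the ``in particular'' statement from the same computation rather than as a separate reduction.
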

\begin{proof}
    For all
    \(
        1 \leq i \leq N
    \) choose Zariski coverings
    \(
        \mathcal D(a_i) \cap \mathcal D(s)
        = \bigcup_{j = 1}^{M_i} \mathcal D(b_{ij})
    \), isotropic pinnings
    \(
        (T_{ij}, \Psi_{ij})
    \) of
    \(
        G_{b_{ij}}
    \) containing
    \(
        (T_i|_{\mathcal D(b_{ij})}, \Psi_i)
    \), and elements
    \(
        g_{ij}
        \in \elem_{G, T_{ij}, \Psi_{ij}}(K_{b_{ij}})
    \) such that
    \(
        P|_{\mathcal D(b_{ij})}
        \supseteq \up{g_{ij}}{P_{ij}}
    \), where
    \(
        P_{ij}
    \) is the parabolic subgroup corresponding to
    \(
        (T_{ij}, \Psi_{ij})
    \). Such data exist by lemma \ref{loc-iso-pin}. By lemmas \ref{pin-elim} and \ref{levi-action} we have
    \[
        \uradical(P)(\mathcal R^{(b_{ij}^\infty)})
        \leq \elem_{
            G,
            \up{g_{ij}}{T_{ij}},
            \up{g_{ij}}{\Psi_{ij}}
        }(
            \mathcal R^{(b_{ij}^\infty)}
        )
        = \elem_{G, T_{ij}, \Psi_{ij}}(
            \mathcal R^{(b_{ij}^\infty)}
        )
        = \elem_{G, T_i, \Psi_i}(
            \mathcal R^{(b_{ij}^\infty)}
        )
    \]
    (the left inclusion is obvious). From lemmas \ref{cozariski} and \ref{xmod-dec} we obtain
    \[
        \Image(\elem_{G, T, \Psi}(
            \mathcal R^{(s^\infty)}
        ))
        \leq \elem_G(\mathcal R). \qedhere
    \]
\end{proof}

\begin{lemma} \label{loc-gluing}
    Let
    \(
        (T, \Psi)
    \) and
    \(
        (T', \Psi')
    \) be isotropic pinnings of \(G\) such that the rank of
    \(
        (T', \Psi')
    \) is \(\geq 2\). Then there is
    \(
        n \geq 1
    \) such that
    \(
        \elem_{G, T, \Psi}(x^n K[x])
        \leq \elem_{G, T', \Psi'}(x K[x])
    \).
\end{lemma}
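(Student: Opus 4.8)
The plan is to verify the inclusion on generators and then compare the two pinnings over a Zariski covering of $\Spec(K)$, much as in the proof of Lemma~\ref{invariance}. Since $\elem_{G, T, \Psi}(x^n K[x])$ is generated by the root elements $t_\alpha(a)$ with $\alpha \in \Psi$ and $a \in P_\alpha(x^n K[x])$, and $\Psi$ is finite, it is enough to find for each $\alpha$ an integer $n$ with $t_\alpha(P_\alpha(x^n K[x])) \leq \elem_{G, T', \Psi'}(x K[x])$ and then take the maximum.

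First I would build the covering. For a prime $\mathfrak p \leqt K$ the ring $K_{\mathfrak p}$ is local, so by Lemma~\ref{loc-iso-pin} both pinnings extend over $K_{\mathfrak p}$ to maximal isotropic pinnings, and any two of these are conjugate by an element of a product of unipotent radicals of $(T', \Psi')$-parabolics. As $(T', \Psi')$ has rank $\geq 2$, its maximal refinement has positive rank, so Lemma~\ref{pin-elim} keeps its elementary group unchanged, and the conjugating element, a product of root elements, lies in $\elem_{G, T', \Psi'}(K_{\mathfrak p})$. Spreading these finitely many data to principal open subschemes and using quasicompactness of $\Spec(K)$ produces a finite covering $\Spec(K) = \bigcup_j \mathcal D(c_j)$ together with $g_j \in \elem_{G, T', \Psi'}(K_{c_j})$ such that over $K_{c_j}$ the maximal refinement $(T'', \Phi)$ of $(T, \Psi)$ equals $\up{g_j}{(T''', \Phi')}$, where $(T''', \Phi')$ is the maximal refinement of $(T', \Psi')$.

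Over a single $K_{c_j}$ the inclusion half of Lemma~\ref{pin-elim} gives $\elem_{G, T, \Psi} \leq \elem_{G, T'', \Phi}$, while $\elem_{G, T'', \Phi}(x K_{c_j}[x]) = \up{g_j}{\elem_{G, T', \Psi'}(x K_{c_j}[x])} = \elem_{G, T', \Psi'}(x K_{c_j}[x])$: the first equality uses $(T'', \Phi) = \up{g_j}{(T''', \Phi')}$ and the equality $\elem_{G, T''', \Phi'} = \elem_{G, T', \Psi'}$ from Lemma~\ref{pin-elim}, and the second holds because the relative elementary group is normalized by $\elem_{G, T', \Psi'}(K_{c_j}) \ni g_j$ by Lemmas~\ref{expl-conj} and \ref{rel-expl}. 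Hence $\elem_{G, T, \Psi}(x K_{c_j}[x]) \leq \elem_{G, T', \Psi'}(x K_{c_j}[x])$. Unwinding the conjugation by $g_j$ through the morphisms $h_{\alpha k}$ of Lemma~\ref{levi-action} presents each $t_\alpha(a)$ as a product of $(T', \Psi')$-root elements whose arguments are polynomials in $a$, with every monomial of degree $\geq 1$ in $a$ and with coefficients in $c_j^{-M} K$ for some $M$ independent of $a$.

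Finally I would patch. The $c_j$, hence the $c_j^{M}$, generate the unit ideal, so $x^n K[x] = \sum_j c_j^{M}\, x^n K[x]$ as ideals of $K[x]$. Viewing $x^n K[x] \hookrightarrow K[x]$ as a crossed module with the summands as crossed $K[x]$-submodules, Lemma~\ref{xmod-dec} writes $\elem_{G, T, \Psi}(K[x], x^n K[x])$ as the product of the groups $\elem_{G, T, \Psi}(K[x], c_j^{M} x^n K[x])$. On the $j$-th factor the $c_j^{M}$-divisibility of the arguments cancels the denominators of the previous paragraph and the degree $\geq 1$ in $a$ keeps them in $x K[x]$, so this factor lies in $\elem_{G, T', \Psi'}(x K[x])$ over $K$; since $\elem_{G, T, \Psi}(x^n K[x]) \leq \elem_{G, T, \Psi}(K[x], x^n K[x])$, the claim follows, in fact already with $n = 1$ once $M$ is chosen large. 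The main obstacle is exactly this integral descent: turning the abstract local inclusion into a formula whose $c_j$-denominators are bounded by a fixed power of $c_j$ uniformly in $a$, and verifying the crossed-module hypotheses required by Lemma~\ref{xmod-dec}.
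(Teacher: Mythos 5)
There is a genuine gap, and it sits at the exact point your argument needs to be sharpest. The claimed equality \(\up{g_j}{\elem_{G, T', \Psi'}(x K_{c_j}[x])} = \elem_{G, T', \Psi'}(x K_{c_j}[x])\) does not follow from Lemmas~\ref{expl-conj} and \ref{rel-expl}: those lemmas say that \(\elem_{G,T',\Psi'}(K_{c_j}[x])\) normalizes the \emph{relative} group \(\elem_{G,T',\Psi'}(K_{c_j}[x], x K_{c_j}[x])\), not the absolute group generated by root elements with arguments in the ideal. The absolute elementary group of an ideal is in general \emph{not} normalized by elementary elements over the ring --- that is the whole reason relative elementary groups are introduced. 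Passing from the relative group back to the absolute one requires Lemma~\ref{rel-to-abs}, whose power-idempotence hypothesis fails for the constant object \(x K[x]\) (its square is \(x^2 K[x]\)); it holds only for the pro-object \(\varprojlim_n x^n K[x] \cong K[x]^{(x^\infty)}\), and then every conclusion carries a ``for sufficiently large \(n\)''. This breaks your local step as stated and also refutes your final claim that \(n = 1\) suffices. For comparison, the paper's own proof is precisely this soft argument done correctly and globally: \(\elem_{G,T,\Psi}(x^n K[x]) \leq \elem_{G,T,\Psi}(K[x], x^n K[x]) \leq \Ker(\elem_G(x^n K[x] \rtimes K[x]) \to \elem_G(K[x])) = \elem_{G,T',\Psi'}(K[x], x^n K[x]) \leq \elem_{G,T',\Psi'}(x K[x])\) for large \(n\), via Lemma~\ref{rel-to-abs} applied to the colocalization and Lemma~\ref{invariance}; the covering-and-conjugacy work you are redoing by hand already lives inside Lemma~\ref{invariance}, so no new patching is needed.

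The patching half of your sketch has further problems. First, the morphisms \(h_{\alpha k}\) of Lemma~\ref{levi-action} describe conjugation by the Levi subgroup \(L\), whereas \(g_j\) is a product of unipotent root elements, so that lemma does not unwind \(\up{g_j}{t_\alpha(a)}\); you would need the Chevalley commutator formula and a genuine computation. Second, the ``coefficients in \(c_j^{-M} K\) uniformly in \(a\)'' claim, together with lifting the resulting identity from \(K_{c_j}[x,z]\) to \(K[x,z]\) (note \(K \to K_{c_j}\) need not be injective), is exactly the hard technical content that the paper's homotope formalism exists to handle; the paper does this kind of denominator-clearing in Lemma~\ref{main-lemma} --- which, tellingly, \emph{uses} Lemma~\ref{loc-gluing} as input --- so it cannot be dismissed as bookkeeping. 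Third, your application of Lemma~\ref{xmod-dec} produces factors \(\elem_{G,T,\Psi}(K[x], c_j^M x^n K[x])\) whose generators are \(\up{g}{t_\alpha(a)}\) for \emph{arbitrary} \(g \in \elem_{G,T,\Psi}(K[x])\); even granting that each \(t_\alpha(a)\) lies in \(\elem_{G,T',\Psi'}(x K[x])\), nothing you have established allows conjugating by such \(g\) without leaving that group --- such stability under the other pinning's elementary group is essentially what the lemma is trying to prove. A repair would have to patch at the level of individual root elements using additivity of \(t_\alpha\) (as the paper does in Lemma~\ref{main-lemma}), not at the level of relative elementary groups.
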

\begin{proof}
    By lemmas \ref{rel-to-abs} (applied to
    \(
        R = K[x]
    \) and
    \(
        A
        = \varprojlim^{\Pro(\Set)}_n x^n K[x]
        \cong K[x]^{(x^\infty)}
    \)) and \ref{invariance} we have
    \begin{align*}
        \elem_{G, T, \Psi}(x^n K[x])
        &\leq \elem_{G, T, \Psi}(K[x], x^n K[x])
        \leq \Ker(
            \elem_G(x^n K[x] \rtimes K[x])
            \to \elem_G(K[x])
        )\\
        &= \elem_{G, T', \Psi'}(K[x], x^n K[x])
        \leq \elem_{G, T', \Psi'}(x K[x])
    \end{align*}
    for sufficiently large \(n\).
\end{proof}

\begin{lemma} \label{main-lemma}
    Let
    \(
        s \in K
    \) and
    \(
        (T, \Psi)
    \) be an isotropic pinning of \(G_s\). Then there is
    \(
        m \geq 0
    \) such that for any
    \(
        \alpha \in \Psi
    \) the image of
    \(
        t_\alpha(P_\alpha(\mathcal R^{(s^m)}))
    \) is contained in
    \(
        \elem_G(\mathcal R)
    \) (such a subset is well-defined for all sufficiently large \(m\)).
\end{lemma}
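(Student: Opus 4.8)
The plan is to deduce the single-level statement from the pro-level statement of lemma~\ref{invariance}, and the point that makes this nontrivial is that the two are genuinely different: the pro-projection \(P_\alpha(\mathcal R^{(s^\infty)}) \to P_\alpha(\mathcal R^{(s^m)})\) is not a regular epimorphism, so the inclusion \(\Image(\elem_{G, T, \Psi}(\mathcal R^{(s^\infty)})) \leq \elem_G(\mathcal R)\) does not formally yield the inclusion of the image of one fixed level. Some constructive input is therefore unavoidable.

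First I would settle well-definedness. Fixing a closed embedding of \(G\) into an affine space over \(K\), each \(t_\alpha\) is given over \(K_s\) by finitely many polynomials whose coefficients have denominators dividing a common power \(s^d\); since \(\Psi\) is finite, \(t_\alpha(0) = 1\), and the variable part has positive degree, there is a single \(m\) so large that for every \(\alpha \in \Psi\) the composite \(P_\alpha(\mathcal R^{(s^m)}) \to G(\mathcal R_s)\) factors through \(G(\mathcal R)\). Because \(G(\mathcal R)\) is a constant object of \(\IP_{\Ring_K^\fp}\), the pro-morphism \(t_\alpha \colon P_\alpha(\mathcal R^{(s^\infty)}) \to G(\mathcal R)\) factors through this level, so the task becomes bounding the honest functorial image \(R \mapsto t_\alpha(P_\alpha(R^{(s^m)})) \subseteq G(R)\) inside \(\elem_G(\mathcal R)\).

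Next I would decompose along the covering. Since \(\mathcal D(s) = \bigcup_i \mathcal D(s a_i)\), lemma~\ref{cozariski} applied to \(\mathcal A = \mathcal R\) with \(s_i = s a_i\) shows, after enlarging \(m\), that a point of \(P_\alpha(\mathcal R^{(s^m)})\) splits additively into points coming from the \((s a_i)\)-colocalizations (writing \(1 = \sum_i d_i a_i^L\) makes this explicit). Using that \(t_\alpha\) is a homomorphism of the group object \(P_\alpha\), and splitting the \(2\)-step nilpotent filtration \(U_{2\alpha} \leq U_\alpha\) by lemma~\ref{xmod-dec} in the ultrashort case, this reduces the claim to showing, for each \(i\), that \(t_\alpha\) of a point of \(P_\alpha(\mathcal R^{((s a_i)^m)})\) lies in \(\elem_G(\mathcal R)\). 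Over \(\mathcal D(s a_i) \subseteq \mathcal D(a_i)\) the rank \(\geq 2\) pinning \((T_i, \Psi_i)\) is now available, and its relative elementary generators enter the very definition of \(\elem_G\).

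The heart of the argument is this last reduction, and here I would invoke the explicit generation of root groups. By lemma~\ref{root-gen} (resp. lemma~\ref{loc-root-gen}) a point of \(P_\alpha\) is a finite product of Chevalley-commutator components \(f_{\beta \gamma i 1}(p, q)\) with \(\beta\) a neighbor of \(\alpha\) on a prescribed side of a hyperplane through \(\alpha\); since \(f_{\beta \gamma i 1}(p \cdot r, q \cdot r') = f_{\beta \gamma i 1}(p, q) \cdot r^i r'\), the surplus \((s a_i)^m\)-divisibility of the point may be distributed between the two arguments, one taken globally over \(K_{s a_i}\) and the other absorbing the denominators incurred when passing from \((T, \Psi)\) to a refinement containing \((T_i, \Psi_i)|_{s a_i}\); lemmas~\ref{pin-elim} and \ref{levi-action} then identify the factors inside \(\Image(\elem_{G, T_i, \Psi_i}(\mathcal R^{(a_i^\infty)}) \to G(R))\), hence inside \(\elem_G(\mathcal R)\). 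I expect the main obstacle to be precisely this bookkeeping: choosing one \(m\) uniform over the finite set \(\Psi\), keeping the commutator rewriting compatible with the \(2\)-step structure of ultrashort \(\mathsf{BC}\)-roots and with the exceptional long root in a \(\mathsf G_2\)-component (where lemma~\ref{loc-root-gen} only gave divisibility by \(3\) and needed the auxiliary module \(N'\)), and ensuring the level shift of lemma~\ref{cozariski} stays below the divisibility spent in clearing denominators, so that a single fixed level \(m\) suffices despite the failure of the pro-projection to be surjective.
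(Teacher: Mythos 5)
Your opening observation (that lemma \ref{invariance} is a pro-level statement and does not formally yield a fixed-level one) and your well-definedness step are both fine, but the heart of your argument has a genuine gap: the ``refinement containing \((T_i, \Psi_i)|_{s a_i}\)'' of \((T, \Psi)\) into which you want to absorb denominators does not exist in general. Containment of isotropic pinnings requires in particular nested tori, so a common refinement of \((T, \Psi)|_{s a_i}\) and \((T_i, \Psi_i)|_{s a_i}\) would force \(T\) and \(T_i\) to commute and generate a common torus, which fails for two unrelated pinnings; conjugacy is also unavailable, since lemma \ref{loc-iso-pin} applies only over semilocal rings with connected spectrum (this is exactly why the proof of lemma \ref{invariance} must refine the covering further and introduce conjugating elements \(g_{ij}\), and even then obtains only a pro-level conclusion). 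Moreover, the tools you cite cannot bridge two unrelated pinnings: lemma \ref{root-gen} rewrites a point of \(P_\alpha\) as a sum of values of \(f_{\beta \gamma i 1}\) with \(\beta, \gamma\) still in \(\Psi\), so it never produces \(\Psi_i\)-root elements; lemma \ref{pin-elim} needs one pinning contained in the other; and lemma \ref{levi-action} gives normalization by the Levi and elementary subgroup of the \emph{same} pinning. After your step the factors are therefore still \(\Psi\)-root elements with \(s\)-denominators, and nothing identifies them inside \(\Image(\elem_{G, T_i, \Psi_i}(\mathcal R^{(a_i^\infty)}) \to G(\mathcal R))\).

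The paper's bridge between the unrelated pinnings is lemma \ref{loc-gluing}: for any two isotropic pinnings over a common base (here \(K_{s a_i}[z]\), where both restrict), one has \(\elem_{G, T, \Psi}(x^n K[x]) \leq \elem_{G, T_i, \Psi_i}(x K[x])\) for some \(n\). This rests on the pinning-independence of relative elementary subgroups (lemma \ref{rel-to-abs} identifies them with a kernel, which lemma \ref{invariance} controls), not on any geometric compatibility of the tori. Applied to the generic element, it yields a single polynomial identity \(t_\alpha(e_k \cdot x^n z) = \prod_j t_{\beta_{ij}}(p_{ij})\) with \(\beta_{ij} \in \Psi_i\) and \(p_{ij} \in P_{\beta_{ij}}(x K_{s a_i}[x, z])\), and the fixed level \(m\) is then extracted by specialization: substituting \(x = s^{m_i} y\) clears the \(s\)-denominators of the \(p_{ij}\) for \(m_i\) large, producing an identity of scheme morphisms \(\mathbb A^2_{a_i} \to G_{a_i}\) that can be evaluated on \(\mathcal R^{(a_i^\infty)}\)-points, after which lemmas \ref{power-idem} and \ref{cozariski} finish. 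Your idea of distributing the surplus divisibility between the two arguments of \(f_{\beta \gamma i 1}\) is an echo of this substitution step, but without a statement like lemma \ref{loc-gluing} there is no cross-pinning identity into which to feed it, so the proposal cannot be completed as written.
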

\begin{proof}
    Fix a root \(\alpha\) and a basis vector
    \(
        e_k \in P_\alpha
    \). Choose a Zariski covering
    \(
        \Spec(K) = \bigcup_{i = 1}^N \mathcal D(a_i)
    \) and isotropic pinnings
    \(
        (T_i, \Psi_i)
    \) of
    \(
        G_{a_i}
    \) of rank \(\geq 2\). By lemma \ref{loc-gluing} applied to
    \(
        K_{s a_i}[z]
    \) there are
    \(
        n \geq 0
    \),
    \(
        M_i \geq 0
    \), roots
    \(
        \beta_{ij} \in \Psi_i
    \) for
    \(
        1 \leq j \leq M_i
    \), and elements
    \(
        p_{ij} \in P_{\beta_{ij}}(x K_{s a_i}[x, z])
    \) such that the image of
    \(
        t_\alpha(e_k \cdot x^n z)
    \) in
    \(
        G(K_{s a_i}[x, z])
    \) is
    \(
        \prod_{j = 1}^{M_i} t_{\beta_{ij}}(p_{ij})
    \). This may be considered as an equality between two scheme morphisms
    \(
        \mathbb A^1_{K_{sa_i}[z]}
        \rightrightarrows G_{K_{sa_i}[z]}
    \) over
    \(
        K_{a_i}[z]
    \), where \(x\) is the coordinate on the affine line. Applying the equality to the
    \(
        (K_{a_i}[y, z])^{(s^\infty)}
    \)-points, we obtain that the following diagram is commutative:
    \[\xymatrix@R=60pt@C=90pt@!0{
        &
        G_s((K_{a_i}[y, z])^{(s^\infty)})
        \ar[dr]
        \\
        (K_{a_i}[y, z])^{(s^\infty)}
        \ar^{
            x
            \mapsto t_\alpha(e_k \cdot x^n z)
        }[ur]
        \ar_{
            x
            \mapsto \prod_{j = 1}^{M_i}
                t_{\beta_{ij}}(\Image(p_{ij}(x, z)))
        }[rr]
        &&
        G(K_{a_i}[y, z]).
    }\]
    The paths of this diagram are well-defined and coincide on
    \(
        (K_{a_i}[y, z])^{(s^{m_i})}
    \) for sufficiently large \(m_i\). Evaluating them on
    \(
        y^{(s^{m_i})}
    \) we obtain
    \[
        \Image(
            t_\alpha(e_k \cdot y^n z^{(s^{m_i n})})
        )
        = \prod_{j = 1}^{M_i} t_{\beta_{ij}}(q_{ij})
        \in G(K_{a_i}[y, z])
    \]
    for some
    \(
        q_{ij} \in P_{\beta_{ij}}(y K_{a_i}[y, z])
    \) (namely, such that
    \(
        p_{ij}(s^{m_i} y, z)
        = \Image(q_{ij})
        \in P_{\beta_{ij}}(y K_{sa_i}[y, z])
    \)). This is an equality between two scheme morphisms
    \(
        \mathbb A^2_{a_i} \to G_{a_i}
    \). In particular, for
    \(
        \mathcal R^{(a_i^\infty)}
    \)-points we have
    \(
        \Image(t_\alpha(
            e_k \cdot (
                \mathcal R^{(s^{nm_i})}
            )^{(a_i^\infty)}
        ))
        \leq \elem_G(\mathcal R)
    \) by lemma \ref{power-idem}, so
    \(
        \Image(t_\alpha(e_k \cdot \mathcal R^{(s^m)}))
        \leq \elem_G(\mathcal R)
    \) for sufficiently large \(m\) by lemma \ref{cozariski}.
\end{proof}

\begin{theorem} \label{e-discr}
    Let \(G\) be a reductive group scheme of local isotropic rank \(\geq 2\) over a unital ring \(K\). Then the elementary group
    \(
        \elem_G(\mathcal R) \leq G(\mathcal R)
    \) is isomorphic to an object from
    \(
        \Ind(\Cat(\Ring_K^\fp, \Set))
        \subseteq \IP_{\Ring_K^\fp}
    \). More precisely, if
    \(
        \Spec(K) = \bigcup_{i = 1}^N \mathcal D(a_i)
    \) is a Zariski covering and
    \(
        (T_i, \Psi_i)
    \) are isotropic pinnings of
    \(
        G_{a_i}
    \) of rank \(\geq 2\), then there is
    \(
        m \geq 0
    \) such that the maps
    \(
        t_\alpha
        \colon P_\alpha(\mathcal R^{(a_i^m)})
        \to G(\mathcal R)
    \) are well-defined and generate
    \(
        \elem_G(\mathcal R)
    \) for
    \(
        \alpha \in \Psi_i
    \),
    \(
        1 \leq i \leq N
    \). Moreover,
    \(
        P_\alpha(\mathcal R^{(a_i^m)})
    \) are \(\mathcal R\)-modules (\(2\)-step nilpotent if \(\alpha\) is ultrashort),
    \(
        t_\alpha
    \) are homomorphisms, and
    \(
        f_{\alpha \beta i j}
        \colon P_\alpha(\mathcal R^{(a_i^m)})
        \times P_\beta(\mathcal R^{(a_i^m)})
        \to P_{i \alpha + j \beta}(\mathcal R^{(a_i^m)})
    \) are homogeneous polynomial maps over \(\mathcal R\) (or \(\mathcal R\)-quadratic on each argument for components of type
    \(
        \mathsf{BC}_\ell
    \)). If \(K\) is Noetherian, then for any fixed algebra
    \(
        R \in \Ring_K^\fp
    \) we may change the homotopes to the corresponding principal ideals.
\end{theorem}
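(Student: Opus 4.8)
The plan is to reduce the whole statement to lemma \ref{main-lemma}, which already contains the essential work, and then to repackage its conclusion using the image calculus of section 3. Fix the covering $\Spec(K) = \bigcup_i \mathcal D(a_i)$ and the pinnings $(T_i, \Psi_i)$, apply lemma \ref{main-lemma} to each $G_{a_i}$ to obtain exponents $m_i$, and set $m = \max_i m_i$, enlarging it if necessary so that all the maps $t_\alpha \colon P_\alpha(\mathcal R^{(a_i^m)}) \to G(\mathcal R)$ become well-defined (the powers of $a_i$ appearing in the $K_{a_i}$-scheme structure of $P_\alpha$ and of $t_\alpha$ are absorbed into the homotope twist once $m$ is large, which is exactly the well-definedness threshold recorded in lemma \ref{main-lemma}). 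With this choice lemma \ref{main-lemma} gives $\Image(t_\alpha(P_\alpha(\mathcal R^{(a_i^m)}))) \subseteq \elem_G(\mathcal R)$ for every $\alpha \in \Psi_i$ and every $i$.

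The core of the argument is the reverse containment, that these finitely many honest level maps already generate $\elem_G(\mathcal R)$. Here I would unwind the definition: $\elem_G(\mathcal R)$ is generated by the images of the groups $\elem_{G, T_i, \Psi_i}(\mathcal R^{(a_i^\infty)})$, each of which is in turn generated by the subobjects $t_\alpha(P_\alpha(\mathcal R^{(a_i^\infty)}))$, so it suffices to control the image of such a subobject in the constant object $G(\mathcal R)$. Since $\mathcal R^{(a_i^\infty)} = \varprojlim_n \mathcal R^{(a_i^n)}$ while the target $G(\mathcal R)$ carries no pro-structure, the morphism factors through a single level, and for $n \geq m$ the level-$n$ image is contained in the level-$m$ image via the transition map $\mathcal R^{(a_i^n)} \to \mathcal R^{(a_i^m)}$. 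Hence the pro-subobject $\Image(t_\alpha(P_\alpha(\mathcal R^{(a_i^\infty)})))$ is contained, as a subobject of $G(\mathcal R)$, in the honest subobject $\Image(t_\alpha(P_\alpha(\mathcal R^{(a_i^m)})))$. Combining the two containments with lemma \ref{subgr-gen} yields
\[
    \elem_G(\mathcal R)
    = \bigl\langle
        t_\alpha(P_\alpha(\mathcal R^{(a_i^m)}))
    \mid
        \alpha \in \Psi_i,\ 1 \leq i \leq N
    \bigr\rangle,
\]
and since every generating subobject now lies in $\Cat(\Ring_K^\fp, \Set)$, lemma \ref{subgr-gen} exhibits $\elem_G(\mathcal R)$ as the filtered colimit $\varinjlim_k (X \cup \{1\} \cup X^{-1})^k$, i.e. as an object of $\Ind(\Cat(\Ring_K^\fp, \Set))$.

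For the structural claims I would argue levelwise on the scheme side. Since $P_\alpha$ is $\mathbb G_\add^{m_\alpha}$ (respectively the $2$-step nilpotent $\mathbb G_\add^{m_{2\alpha}} \dotoplus \mathbb G_\add^{m_\alpha}$), evaluating at $\mathcal R^{(a_i^m)}$ produces an $\mathcal R$-module (resp. a $2$-step nilpotent $\mathcal R$-module), the scalar action coming from $({-}) \cdot ({=}) \colon P_\alpha \times \mathbb A^1 \to P_\alpha$ together with the $\mathcal R$-module structure of the homotope; that $t_\alpha$ is a homomorphism is the $\mathcal R^{(a_i^m)}$-point statement of the fact that $t_\alpha$ is an isomorphism of group schemes onto $U_\alpha$ (resp. onto $U_\alpha / U_{2\alpha}$ with the chosen splitting). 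The homogeneity and polynomiality of $f_{\alpha\beta ij}$ (resp. its $\mathcal R$-quadraticity in the $\mathsf{BC}_\ell$ case) is inherited from the properties of the Chevalley commutator morphisms recorded after the commutator formula, once the $K_{a_i}$-coefficients have again been absorbed into the homotope for $m$ large. Finally, in the Noetherian case I would invoke lemma \ref{noeth-coloc}, which for a fixed $R \in \Ring_K^\fp$ identifies the homotope $R^{(a_i^m)}$ with the principal ideal $a_i^m R$ and embeds the associated pro-object into $R_{a_i}$, allowing every $P_\alpha(\mathcal R^{(a_i^m)})$ to be replaced by the corresponding object built from $a_i^m R$.

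The main obstacle I expect is not the algebra but the bookkeeping of the image computation in $\Pro(\Set)$: one must track the correct variance of the transition maps so that "the image of the pro-object sits inside a fixed level image'' is genuinely justified, and one must keep the well-definedness threshold for $m$ uniform across the finitely many roots $\alpha$ and indices $i$ at once. Once a single $m$ works for all of these simultaneously, everything else is a levelwise transcription of scheme-theoretic identities already available before the theorem.
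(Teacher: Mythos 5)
Your proposal is correct and follows essentially the same route as the paper's own proof: apply lemma \ref{main-lemma} to get a uniform \(m\) for which the maps \(t_\alpha \colon P_\alpha(\mathcal R^{(a_i^m)}) \to G(\mathcal R)\) are well-defined with image in \(\elem_G(\mathcal R)\), obtain the reverse containment by factoring the generating morphisms \(t_\alpha \colon P_\alpha(\mathcal R^{(a_i^\infty)}) \to G(\mathcal R)\) through level \(m\), establish the module and polynomial structure by absorbing the \(K_{a_i}\)-coefficients into the homotope for \(m\) large, and invoke lemma \ref{noeth-coloc} for the Noetherian case. The only difference is expository (you spell out the pro-to-constant factorization and the max over finitely many indices, which the paper leaves implicit), so there is nothing to correct.
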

\begin{proof}
    By lemma \ref{main-lemma} there is \(m\) such that \(
        t_\alpha
        \colon P_\alpha(\mathcal R^{(a_i^m)})
        \to G(\mathcal R)
    \) are well-defined and take values in
    \(
        \elem_G(\mathcal R)
    \). On the other hand, the generating morphisms \(
        t_\alpha
        \colon P_\alpha(\mathcal R^{(a_i^\infty)})
        \to G(\mathcal R)
    \) clearly factors through \(
        t_\alpha
        \colon P_\alpha(\mathcal R^{(a_i^m)})
        \to G(\mathcal R)
    \).

    The remaining assertions are easy. The \(2\)-step nilpotent \(\mathcal R\)-module structure on
    \(
        P_\alpha(\mathcal R^{(a_i^m)})
    \) is well-defined for ultrashort \(\alpha\) and sufficiently large \(m\) since it is given by some
    \(
        K_{a_i}
    \)-bilinear \(2\)-cocycle. Actually, every polynomial
    \(
        f \in K_{a_i}[x_1, \ldots, x_p]
    \) without terms of degree \(\leq 1\) induces a well-defined map
    \(
        (\mathcal R^{(a_i^m)})^p
        \to \mathcal R^{(a_i^m)}
    \) if \(m\) is large enough. The last claim follows from lemma \ref{noeth-coloc}.
\end{proof}

By theorem \ref{e-discr} the elementary group
\(
    \elem_G(R)
\) is a group object in
\(
    \Ind(\Set)
\) for every
\(
    R \in \Ring_K
\). We may consider it as an ordinary subgroup of \(G(R)\) by evaluating the direct limit in \(\Set\) (i.e. by taking the union of components of
\(
    \elem_G(R)
\)).

Let us say that a subfunctor
\(
    H \colon \Ring_K \to \Group
\) of \(G\) is \textit{scheme generated} if there is an affine \(K\)-scheme \(X\) and a morphism
\(
    f \colon X \to G
\) such that
\(
    H(R) = \langle f(X(R)) \rangle
\) as an abstract group for every
\(
    R \in \Ring_K
\) (here \(G\) is any affine group \(K\)-scheme, not necessarily reductive). The next lemma shows that such subfunctor has a canonical ind-structure.

\begin{lemma} \label{sch-gen}
    Let
    \(
        H \colon \Ring_K \to \Group
    \) be a subgroup of \(G\). If it is scheme generated by
    \(
        f \colon X \to G
    \) and a scheme morphism
    \(
        f' \colon X' \to G
    \) takes values in \(H\) for an affine \(K\)-scheme \(X'\), then there is
    \(
        n \geq 0
    \) such that
    \[
        f'(X'(R))
        \subseteq (
            f(X(R)) \cup \{1\} \cup f(X(R))^{-1}
        )^n
    \] for all
    \(
        R \in \Ring_K
    \). In particular, if \(f'\) also scheme generates \(H\), then the subgroups of \(G\) in
    \(
        \Ind(\Cat(\Ring_K^\fp, \Set))
    \) generated by
    \(
        \Image(f)
    \) and
    \(
        \Image(f')
    \) coincide.
\end{lemma}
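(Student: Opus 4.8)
The plan is to run the \emph{generic point} argument hinted at in the introduction. Write $X' = \Spec(S)$, so that for every unital $K$-algebra $R$ we have $X'(R) = \Ring_K(S, R)$, and the identity homomorphism is the universal point $\xi = \id_S \in X'(S)$. Since $f'$ takes values in $H$ and $H$ is scheme generated by $f$, the element $f'(\xi)$ lies in $H(S) = \langle f(X(S)) \rangle$. Expanding it as an abstract group word, there are an integer $n \geq 0$, points $x_1, \dots, x_n \in X(S)$ and signs $\eps_1, \dots, \eps_n \in \{+1, -1\}$ such that $f'(\xi) = f(x_1)^{\eps_1} \cdots f(x_n)^{\eps_n}$ in $G(S)$. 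This $n$, read off at the single point $\xi$, will be the uniform bound.

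Next I would specialise this identity to an arbitrary $R$-point. Every $p \in X'(R)$ is a $K$-algebra homomorphism $\varphi \colon S \to R$, and $p = X'(\varphi)(\xi)$. Applying the group homomorphism $G(\varphi)$ to the identity above and using the naturality of $f$ and $f'$ (i.e. $G(\varphi) \circ f'_S = f'_R \circ X'(\varphi)$ and $G(\varphi) \circ f_S = f_R \circ X(\varphi)$), I get $f'(p) = f(X(\varphi)(x_1))^{\eps_1} \cdots f(X(\varphi)(x_n))^{\eps_n}$, where every $X(\varphi)(x_j)$ lies in $X(R)$. Hence $f'(p)$ is a word of length $n$ in $f(X(R)) \cup f(X(R))^{-1}$, so $f'(X'(R)) \subseteq (f(X(R)) \cup \{1\} \cup f(X(R))^{-1})^n$ for every $R$, which is the first claim.

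For the final assertion I would read this inclusion in $\Cat(\Ring_K^\fp, \Set)$, where images, inversions, finite unions and products of subobjects are all computed componentwise over $R \in \Ring_K^\fp$; thus it becomes $\Image(f') \subseteq (\Image(f) \cup \{1\} \cup \Image(f)^{-1})^n$ as subobjects of $G$. By lemma \ref{subgr-gen} the right-hand side is contained in $\langle \Image(f) \rangle$, whence $\langle \Image(f') \rangle \leq \langle \Image(f) \rangle$. If $f'$ also scheme generates $H$, the hypotheses become symmetric (note that $f$ automatically takes values in $H$, since $f(X(R)) \subseteq \langle f(X(R)) \rangle = H(R)$), so the same reasoning with $f$ and $f'$ interchanged yields the opposite inclusion and hence equality of the two generated subgroups.

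The argument is formal and should present no real obstacle; the one point demanding attention is that $n$ must be fixed once and for all at the universal point $\xi$, so that a \emph{single} word expression, rather than an $R$-dependent family of them, is propagated to all algebras by functoriality. This is exactly what forces the uniformity of the bound, and it relies only on $H(S)$ being literally generated by $f(X(S))$ as an abstract group, which is the definition of scheme generation.
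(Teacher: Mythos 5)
Your proof is correct and is essentially identical to the paper's: both use the universal point of $X'$ over its coordinate algebra, expand $f'$ of that point as a word of length $n$ in the generators $f(X(\widehat{X'}))^{\pm 1}$, and then propagate this single word to every $R$-point by functoriality of $G$, $X$, $X'$, $f$, $f'$. Your additional spelling-out of the ``in particular'' clause via Lemma~\ref{subgr-gen} matches what the paper leaves implicit.
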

\begin{proof}
    Let
    \(
        \widehat{X'}
    \) be the coordinate \(K\)-algebra of \(X'\) and
    \(
        x'_0 \in X'(\widehat{X'})
    \) be the universal point corresponding to
    \(
        \id \colon X' \to X'
    \) by the Yoneda lemma. By definition, there are
    \(
        n \geq 0
    \),
    \(
        a_i \in X(\widehat{X'})
    \), and
    \(
        \eps_i \in \{-1, 1\}
    \) for
    \(
        1 \leq i \leq n
    \) such that
    \(
        f'(x'_0) = \prod_{i = 1}^n f(a_i)^{\eps_i}
    \). If
    \(
        x' \in X'(R)
    \) is any \(R\)-point, then
    \(
        x' = u(x'_0)
    \) for unique homomorphism
    \(
        u \colon \widehat{X'} \to R
    \) and
    \[
        f'(x')
        = u_*(f'(x'_0))
        = \prod_{i = 1}^n
            f(u_*(a_i))^{\eps_i}. \qedhere
     \]
\end{proof}

\begin{theorem} \label{elem-gen}
    Let \(G\) be a reductive group scheme of local isotropic rank \(\geq 2\) over a unital ring \(K\). Then the elementary subgroup
    \(
        \elem_G \leq G
    \) is scheme generated by a morphism
    \(
        f \colon \mathbb A^N \to G
    \) from an affine space and its ind-structure is induced by \(f\). In particular,
    \(
        \elem_G(R \times R')
        = \elem_G(R) \times \elem_G(R')
    \),
    \(
        \elem_G(R / I)
        = \Image(\elem_G(R) \to G(R / I))
    \) for any ideal
    \(
        I \leqt R
    \), and
    \(
        \elem_G
    \) commutes with direct limits as an abstract group. On the other hand,
    \(
        \elem_G
    \) is also scheme generated by a \(G\)-equivariant morphism
    \(
        u \colon X \to G
    \) from a finitely presented affine \(K\)-scheme \(X\) with an action of \(G\), so
    \(
        \elem_G \leq G
    \) is normal both as an abstract group subfunctor and as a group object in
    \(
        \IP_{\Ring_K^\fp}
    \).
\end{theorem}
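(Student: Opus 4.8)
The plan is to read everything off Theorem \ref{e-discr}, which already presents \(\elem_G(\mathcal R)\) as generated by the finitely many morphisms \(t_\alpha \colon P_\alpha(\mathcal R^{(a_i^m)}) \to G\) with \(\alpha \in \Psi_i\) ranging over the roots of the chosen pinnings of the Zariski covering \(\Spec(K) = \bigcup_i \mathcal D(a_i)\). Since a homotope has the same underlying set as \(\mathcal R\), each functor \(P_\alpha(\mathcal R^{(a_i^m)})\) is (isomorphic to) an affine space over \(K\), and by the last assertions of Theorem \ref{e-discr} the \(t_\alpha\) have polynomial coordinates, hence are genuine scheme morphisms. First I would assemble them into one morphism \(f \colon \mathbb A^N \to G\) from the product \(\prod_{i,\alpha} P_\alpha(\mathcal R^{(a_i^m)}) \cong \mathbb A^N\), \(N = \sum_{i,\alpha} \dim P_\alpha\), sending a tuple to the ordered product \(\prod_{i,\alpha} t_\alpha(\cdot)\). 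Because \(t_\alpha(0) = 1\), specializing all but one coordinate to \(0\) recovers each individual \(t_\alpha\), so \(\langle f(\mathbb A^N(R))\rangle = \elem_G(R)\) for every \(R\); thus \(f\) scheme generates \(\elem_G\). As \(f\) and the family \((t_\alpha)\) scheme generate the same abstract subfunctor, Lemma \ref{sch-gen} shows that the ind-subgroup they generate in \(\Ind(\Cat(\Ring_K^\fp, \Set))\) is the same, namely the ind-structure of Theorem \ref{e-discr}; hence the ind-structure is induced by \(f\).

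The three ``in particular'' statements are then formal consequences of scheme generation by an affine space with \(f(0) = 1\). Since \(\mathbb A^N\) and \(G\) convert \(R \times R'\) into the product of their values and \(f(\vec x, \vec 0) = (f(\vec x), 1)\), \(f(\vec 0, \vec y) = (1, f(\vec y))\), the generated subgroup splits as \(\elem_G(R) \times \elem_G(R')\). For quotients, surjectivity of \(\mathbb A^N(R) \to \mathbb A^N(R/I)\) forces surjectivity of \(f(\mathbb A^N(R)) \to f(\mathbb A^N(R/I))\), whence \(\Image(\elem_G(R) \to G(R/I)) = \elem_G(R/I)\). Finally each word set \(W_n = (f(\mathbb A^N) \cup \{1\} \cup f(\mathbb A^N)^{-1})^n\) is the image of a finitely presented affine scheme, so it commutes with filtered colimits of rings, and therefore so does \(\elem_G = \varinjlim_n W_n\) as an abstract group.

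For the last assertion I would take \(X = G \times \mathbb A^N\), with \(G\) acting by left translation on the first factor, and \(u \colon X \to G\), \(u(g, \vec x) = \up g{f(\vec x)}\); this \(X\) is finitely presented and affine since \(G\) is, and \(u\) is \(G\)-equivariant by construction. The only nonformal point is that \(u\) takes values in \(\elem_G\), i.e. that \(\elem_G\) is conjugation-invariant; granting this, \(u\) scheme generates \(\elem_G\) (its image contains \(f(\mathbb A^N)\) via \(g = 1\) and lies in \(\elem_G\)), and \(G\)-equivariance then yields normality as an abstract subfunctor by \(\up g{\langle u(X(R))\rangle} = \langle u(\up g{X(R)})\rangle = \langle u(X(R))\rangle\), and — using Lemma \ref{sch-gen} to see that \(G\) acts levelwise on the \(W_n\), so the conjugation action descends to the colimit — also as a group object in \(\IP_{\Ring_K^\fp}\). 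By functoriality of \(\elem_G\) it suffices to verify \(u(g_0, \vec x_0) \in \elem_G(R_0)\) for the universal point \((g_0, \vec x_0)\) over the finitely presented algebra \(R_0 = \mathcal O(G \times \mathbb A^N)\), since any \((g, \vec x) \in X(R)\) is obtained from it by a homomorphism \(R_0 \to R\) and \(\elem_G\) is a subfunctor.

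The hard part, and the main obstacle, is this universal membership. I would write \(f(\vec x_0) = \prod_{i,\alpha} t_\alpha(a_{i\alpha})\) and treat each conjugate \(\up{g_0}{t_\alpha(a_{i\alpha})}\) separately. Over \(\mathcal D(a_i) \subseteq \Spec(R_0)\) the root element \(t_\alpha(a_{i\alpha})\) is the image of a point of the unipotent radical \(\uradical(P)\) of a canonical parabolic \(P \in \{P_i^+, P_i^-\}\) of \(G_{a_i}\) over the homotope \(R_0^{(a_i^\infty)}\), so its conjugate is the image of a point of \(\uradical(\up{g_0}P)\), the unipotent radical of the parabolic \(\up{g_0}P\) of \(G\) over \((R_0)_{a_i}\). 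Applying Lemma \ref{invariance} over the base ring \(R_0\) — legitimate because the covering \(\Spec(K) = \bigcup_i \mathcal D(a_i)\) and the pinnings \((T_i, \Psi_i)\) pull back to \(R_0\), so that \(\elem_{G_{R_0}}(R_0) = \elem_G(R_0)\) — shows that the image of \(\uradical(\up{g_0}P)(R_0^{(a_i^\infty)})\) lies in \(\elem_G(R_0)\). Hence every \(\up{g_0}{t_\alpha(a_{i\alpha})}\), and therefore \(\up{g_0}{f(\vec x_0)}\), lies in \(\elem_G(R_0)\), closing the argument. The delicate points I expect to need care are exactly that conjugating a root subgroup by the generic element of \(G\) produces the unipotent radical of an honest parabolic over the localized coordinate ring, and that the absolute statement of Lemma \ref{invariance} is available with \(R_0\) in place of \(K\); both are consequences of the functoriality built into the homotope formalism, but they are the real content of the normality proof.
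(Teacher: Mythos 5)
Your proposal is correct and follows essentially the same route as the paper: the first claim and the ``in particular'' statements are read off Theorem \ref{e-discr} (with Lemma \ref{sch-gen} identifying the ind-structures), and normality is proved by taking \(u(g, \vec x) = \up g{f(\vec x)}\) on \(G \times \mathbb A^N\), reducing to the universal point over \(\widehat G[x_1, \ldots, x_N]\), and invoking Lemma \ref{invariance}. In fact you make explicit the two steps the paper compresses into ``this follows from lemma \ref{invariance}'' --- that the conjugated root subgroups land in unipotent radicals of genuine parabolics over the localized coordinate ring, and that Lemma \ref{invariance} must be applied with the coordinate ring as the new base, using that the covering, the pinnings, and hence the elementary subgroup itself pull back.
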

\begin{proof}
    The first claim follows from theorem \ref{e-discr}. In order to prove the second claim it suffice to check that
    \[
        u \colon G \times \mathbb A^N \to G,\,
        (g, \vec x) \mapsto \up g{f(\vec x)}
    \]
    takes values in
    \(
        \elem_G
    \) as an abstract subgroup functor of \(G\). Actually, we only have to check that
    \(
        u(g_0, \vec x_0)
        = \up{g_0}{f(\vec x_0)} \in G(R)
    \), where
    \(
        R = \widehat G[x_1, \ldots, x_N]
    \), \(\widehat G\) is the coordinate algebra of \(G\), and
    \(
        (g_0, \vec x_0) \in G(R) \times \mathbb A^N(R)
    \) is the universal point. This follows from lemma \ref{invariance}.
\end{proof}

The next theorem shows that our elementary subgroups coincide with the elementary subgroups constructed in \cite{petrov-stavrova} if the latter ones are defined.

\begin{theorem} \label{par-gen}
    Let \(G\) be a reductive group scheme of local isotropic rank \(\geq 2\) over a unital ring \(K\) and
    \(
        P^+, P^- \leq G
    \) be opposite parabolic subgroups. Suppose that their unipotent radicals
    \(
        \uradical(P^\pm)
    \) non-trivially intersect all simple factors of
    \(
        G / \Cent(G)
    \) over geometric points. Then
    \(
        \elem_G
    \) is scheme generated by the subscheme
    \(
        \uradical(P^-) \times \uradical(P^+)
        \subseteq G
    \).
\end{theorem}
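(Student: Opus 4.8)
The plan is to reduce everything to the abstract-group equality
\(
    \elem_G(R) = \langle \uradical(P^-)(R) \cup \uradical(P^+)(R) \rangle
\)
for every \(R \in \Ring_K\): once this is known, lemma \ref{sch-gen} applied to the two scheme-generating data — the morphism \(f \colon \mathbb A^N \to G\) of theorem \ref{elem-gen} and the multiplication morphism \(\uradical(P^-) \times \uradical(P^+) \to G\) — shows that they induce the same ind-structure, which is the assertion of the theorem. Write \(H\) for the subgroup generated by \(\uradical(P^-) \times \uradical(P^+)\). The inclusion \(H \leq \elem_G\) is immediate from lemma \ref{invariance} with \(s = 1\) (so that \(\mathcal R^{(1^\infty)} = \mathcal R\)), since \(\uradical(P^\pm)\) are unipotent radicals of parabolic subgroups of \(G = G_1\). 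It remains to prove \(\elem_G \leq H\), and by theorem \ref{e-discr} it suffices to show \(\elem_{G, T_i, \Psi_i}(\mathcal R^{(a_i^\infty)}) \leq H\) for each \(i\).

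For this reverse inclusion I would mimic the localization from the proof of lemma \ref{invariance}. Cover \(\mathcal D(a_i)\) by principal opens \(\mathcal D(b_{ij})\) over which, by lemma \ref{loc-iso-pin}, one can choose a maximal isotropic pinning \((T_{ij}, \Psi_{ij})\) containing \((T_i, \Psi_i)\) and realize a minimal parabolic of \(P|_{\mathcal D(b_{ij})}\) through a maximal isotropic pinning conjugate to \((T_{ij}, \Psi_{ij})\) by some \(g_{ij} \in \elem_{G, T_{ij}, \Psi_{ij}}(K_{b_{ij}})\). Let \((T_P, \Psi_P)\) be the isotropic sub-pinning of this conjugate pinning cut out by \(P\); the hypothesis that \(\uradical(P^\pm)\) meet all simple factors of \(G / \Cent(G)\) at every geometric point is precisely the statement that \((T_P, \Psi_P)\) has positive rank. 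The crucial observation is that the conjugator \(g_{ij}\) lies in the elementary group over the \emph{base} ring \(K_{b_{ij}}\), which normalizes \(\elem_{G, T_{ij}, \Psi_{ij}}(\mathcal R^{(b_{ij}^\infty)})\) by lemma \ref{levi-action}; hence conjugation by \(g_{ij}\) fixes this subgroup. Combining this with lemma \ref{pin-elim}, applied first to \((T_i, \Psi_i) \subseteq (T_{ij}, \Psi_{ij})\) and then to the positive-rank \((T_P, \Psi_P)\) inside the conjugate maximal pinning, gives
\[
    \elem_{G, T_i, \Psi_i}(\mathcal R^{(b_{ij}^\infty)})
    = \elem_{G, T_{ij}, \Psi_{ij}}(\mathcal R^{(b_{ij}^\infty)})
    = \elem_{G, T_P, \Psi_P}(\mathcal R^{(b_{ij}^\infty)}).
\]
Since the root subgroups of \((T_P, \Psi_P)\) are exactly the factors of \(\uradical(P^+)\) and \(\uradical(P^-)\), the last group equals \(\langle \uradical(P^+), \uradical(P^-) \rangle\) evaluated on \(\mathcal R^{(b_{ij}^\infty)}\) and so lies in \(H\).

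It then remains to patch: as \(\mathcal D(a_i) = \bigcup_j \mathcal D(b_{ij})\), lemmas \ref{cozariski} and \ref{xmod-dec} assemble the inclusions \(\elem_{G, T_i, \Psi_i}(\mathcal R^{(b_{ij}^\infty)}) \leq H\) into \(\elem_{G, T_i, \Psi_i}(\mathcal R^{(a_i^\infty)}) \leq H\), exactly as in lemma \ref{invariance}. Together with the forward inclusion this yields \(\elem_G = H\) as abstract subgroup functors, and lemma \ref{sch-gen} upgrades the equality to the claimed scheme-generation. I expect the main obstacle to be the local statement that a parabolic of positive rank in every factor already generates the rank \(\geq 2\) elementary group; the mechanism that makes it go through without a separate normality result for \(H\) is that the conjugator \(g_{ij}\) moving \(P\) into standard position belongs to the base-ring elementary group and is therefore absorbed by the invariance in lemma \ref{levi-action}, so that lemma \ref{pin-elim} applies on both sides. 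The remaining points to verify carefully are that the positive-rank hypothesis is exactly what forces \((T_P, \Psi_P)\) to have positive rank, and that the sub-pinning's root subgroups exhaust \(\uradical(P^\pm)\).
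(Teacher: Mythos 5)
Your route is genuinely different from the paper's, and for the most part it is sound. The paper exploits the independence of \(\elem_G\) on the defining data to \emph{re-choose} the covering and the pinnings adapted to \(P^+\) (so that \(P^+_{a_i}\) contains the standard parabolic \(P_i\) of \((T_i,\Psi_i)\)), and then quotes \cite[lemma 12]{petrov-stavrova}: over \(K_{a_i}\) the subfunctor \(\elem_{G,T_i,\Psi_i}\) is generated by \(\uradical(P^\pm)_{a_i}\). You instead keep arbitrary pinnings, further localize, and re-run the localization--conjugation mechanism of lemma \ref{invariance} in reverse, replacing the external citation by the internal lemma \ref{pin-elim} (a positive-rank sub-pinning yields the same elementary group), with lemma \ref{levi-action} absorbing the conjugator \(g_{ij}\), and with the same \ref{cozariski}/\ref{xmod-dec} patching. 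This buys self-containedness (no appeal to Petrov--Stavrova), at the cost of redoing their work; your two applications of lemma \ref{pin-elim} and the forward inclusion via lemma \ref{invariance} with \(s=1\) are set up correctly.

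There is, however, one genuine gap, and it sits exactly where the paper's citation does real work: your assertion that ``the root subgroups of \((T_P,\Psi_P)\) are exactly the factors of \(\uradical(P^+)\) and \(\uradical(P^-)\).'' The sub-pinning cut out by \(P^+\) inside the conjugated maximal pinning determines \(P^+\) together with its \emph{standard} opposite \(\tilde P^-\), namely the one whose Levi is the centralizer of \(T_P\); nothing forces \(\tilde P^-\) to equal the given \(P^-|_{\mathcal D(b_{ij})}\), since \(P^+\) admits many opposite parabolics. As written, your argument only shows that \(\elem_G\) is generated by \(\uradical(P^+)\) together with some locally defined opposite radicals --- a weaker statement. (The content of \cite[lemma 12]{petrov-stavrova} is precisely that the group generated by a pair of opposite unipotent radicals is independent of the pair.) The gap is closable with your own tools: over the localization at a prime, \(P^+\cap P^-\) and \(\Cent(T_P)\) are both Levi subgroups of \(P^+\), hence conjugate by a (unique) element \(u\in\uradical(P^+)\) by \cite[Exp. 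XXVI]{sga3}, and \(u\) spreads out to \(K_{b_{ij}}\) after shrinking the covering; conjugating the sub-pinning by \(u\) fixes \(P^+\) and carries \(\tilde P^-\) to \(P^-\). Moreover, since \(P^+ \supseteq \up{g_{ij}}{P_{ij}}\) gives \(\uradical(P^+)\leq \up{g_{ij}}{\uradical(P_{ij})}\), one has \(u = \up{g_{ij}}{u'}\) with \(u'\in\uradical(P_{ij})(K_{b_{ij}})\leq\elem_{G,T_{ij},\Psi_{ij}}(K_{b_{ij}})\), so \(u g_{ij}=g_{ij}u'\) still lies in the base elementary group and lemmas \ref{levi-action} and \ref{pin-elim} apply to the new conjugator unchanged. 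You flagged this point as ``to verify carefully,'' but without this extra step the proof does not reach the stated theorem.
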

\begin{proof}
    The unipotent radicals are contained in
    \(
        \elem_G
    \) by lemma \ref{invariance}. Conversely, by lemma \ref{loc-iso-pin} there is a Zariski covering
    \(
        \Spec(K) = \bigcup_{i = 1}^N \mathcal D(a_i)
    \) and isotropic pinnings
    \(
        (T_i, \Psi_i)
    \) of \(G_s\) such that
    \(
        P_{a_i}^+
    \) contain the standard parabolic subgroups
    \(
        P_i \leq G_{a_i}
    \) constructed by
    \(
        (T_i, \Psi_i)
    \). By \cite[lemma 12]{petrov-stavrova} the subfunctor
    \(
        \elem_{G, T_i, \Psi_i} \leq G_{a_i}
    \) is generated by
    \(
        \uradical(P^{\pm})_{a_i}
    \). It follows that the image of
    \(
        \elem_{G, T_i, \Psi_i}(
            \mathcal R^{(a_i^\infty)}
        )
    \) in
    \(
        G(\mathcal R)
    \) is contained in the subgroup generated by
    \(
        \uradical(P^\pm)(\mathcal R)
    \).
\end{proof}

Finally, let us prove that
\(
    \elem_G
\) is perfect. It is well-known that
\(
    \elem(\mathsf B_2, \mathbb F_2)
\) and
\(
    \elem(\mathsf G_2, \mathbb F_2)
\) are not perfect even in the adjoint case (their derived subgroups are simple but of index \(2\)). Following \cite{luzgarev-stavrova} we impose the following additional condition: if
\(
    \mathfrak m \leqt K
\) is a maximal ideal such that
\(
    K / \mathfrak m \cong \mathbb F_2
\), then the geometric fiber of
\(
    G / \Cent(G)
\) at
\(
    K / \mathfrak m
\) has no simple factors of the types
\(
    \mathsf B_2
\) and
\(
    \mathsf G_2
\). Since the residue fields at non-maximal prime ideals of \(K\) are infinite, this condition also holds for all unital \(K\)-algebras.

\begin{theorem} \label{perfect}
    Let \(G\) be a reductive group scheme of local isotropic rank \(\geq 2\) over a unital ring \(K\). Suppose that the additional condition holds, i.e. the geometric fibers of
    \(
        G / \Cent(G)
    \) over all residue fields of \(2\) elements does not have components with root systems of the types
    \(
        \mathsf B_2
    \) and
    \(
        \mathsf G_2
    \). Then
    \(
        \elem_G
    \) is perfect, i.e.
    \(
        \elem_G(R) = [\elem_G(R), \elem_G(R)]
    \) for any
    \(
        R \in \Ring_K
    \).
\end{theorem}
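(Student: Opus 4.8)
The plan is to reduce global perfectness to the known perfectness of an isotropic elementary group attached to a parabolic subgroup, and then to transport the resulting commutator identities through the homotope construction and patch them over the covering---exactly the pattern of lemmas \ref{invariance} and \ref{main-lemma}.

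By theorem \ref{elem-gen} the subgroup $\elem_G$ is scheme generated, and since the commutator map is itself a morphism of schemes, so is $[\elem_G, \elem_G]$; it therefore suffices to check on the universal point that every generating root element factors through $[\elem_G, \elem_G]$, all values following by specialisation. Fix the covering $\Spec(K) = \bigcup_i \mathcal D(a_i)$ and the pinnings $(T_i, \Psi_i)$ of rank $\geq 2$. Over each $K_{a_i}$ the pinning supplies opposite parabolic subgroups whose unipotent radicals meet every simple factor (the rank being positive), so $\elem_{G, T_i, \Psi_i}$ is the Petrov--Stavrova elementary subgroup, as in the proof of theorem \ref{par-gen}; under the present additional hypothesis it is perfect by the theorem of Luzgarev and Stavrova \cite{luzgarev-stavrova}. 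Concretely, for every $K_{a_i}$-algebra $S$ each root element $t_\alpha(a) \in \elem_{G, T_i, \Psi_i}(S)$ is a product of commutators of root elements, and this is an identity of scheme morphisms in the argument $a$.

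Being a scheme identity, it may be evaluated on the homotope points. Applying it to $\mathcal R^{(a_i^\infty)}$---equivalently at the finite level $\mathcal R^{(a_i^m)}$ furnished by theorem \ref{e-discr}---shows that every generator $t_\alpha(\mathcal R^{(a_i^m)})$ of $\elem_{G, T_i, \Psi_i}(\mathcal R^{(a_i^\infty)})$ maps into $[\elem_G(\mathcal R), \elem_G(\mathcal R)]$, since the commutators involved are commutators of root elements and these already lie in $\elem_G(\mathcal R)$ by lemma \ref{invariance}. Summing the images over $1 \leq i \leq N$ with lemmas \ref{cozariski} and \ref{xmod-dec}, precisely as in lemma \ref{invariance}, places the entire generating family of $\elem_G(\mathcal R)$ inside its commutator subgroup, whence $\elem_G(R) = [\elem_G(R), \elem_G(R)]$ for all $R \in \Ring_K$.

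The substantive point is the local input rather than the patching: one must know that Luzgarev--Stavrova perfectness is available for each $(T_i, \Psi_i)$, and this is exactly the role of the additional hypothesis. It excludes the genuinely non-perfect configurations---the components of type $\mathsf B_2$ and $\mathsf G_2$ over residue fields $\mathbb F_2$, where for instance $\symp_4(\mathbb F_2) \cong S_6$ has abelianisation $\mathbb Z/2$---and, because residue fields at non-maximal primes are infinite, the same condition persists over every $K_{a_i}$-algebra, so that \cite{luzgarev-stavrova} does apply levelwise to the pro-system defining the homotope.
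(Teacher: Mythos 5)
Your overall strategy---local perfectness of the isotropic elementary groups plus patching over the covering---is indeed the paper's, but the central step fails as written. The gap is the sentence ``Being a scheme identity, it may be evaluated on the homotope points.'' Applying plain Luzgarev--Stavrova perfectness at the generic point of \(P_\alpha\) over \(K_{a_i}\) does give a scheme-level identity \(t_\alpha(a) = \prod_k [g_k(a), h_k(a)]\), but the entries \(g_k, h_k\) are products of root elements whose arguments are polynomials over \(K_{a_i}\) with, in general, nonzero constant terms. The homotope \(\mathcal R^{(a_i^\infty)}\) is a \emph{non-unital} algebra object, and by construction only pointed finitely presented affine \(K_{a_i}\)-schemes and pointed morphisms have values on it. The total product \(\prod_k [g_k({-}), h_k({-})]\) is pointed (it equals \(t_\alpha\)), so it does evaluate---but evaluation returns only \(t_\alpha(a)\) itself; the individual entries are not pointed, so the commutator decomposition does not survive evaluation. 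If you instead evaluate the entries in the unitalization \(\mathcal R^{(a_i^\infty)} \rtimes K_{a_i}^{\Ind}\), the identity you obtain lives in \(G(\mathcal R^{(a_i^\infty)} \rtimes K_{a_i}^{\Ind})\), which maps to \(G(\mathcal R_{a_i})\) but \emph{not} to \(G(\mathcal R)\): the commutator entries acquire denominators \(1/a_i\) and are not images of anything in \(\elem_G(\mathcal R)\). So you get a commutator decomposition only after localization, which is precisely the obstruction the localization-and-patching formalism of the paper is built to circumvent; perfectness of \(\elem_{G, T_i, \Psi_i}(S)\) for every \(K_{a_i}\)-algebra \(S\) is, by itself, too weak a local input, contrary to your closing remark that the local input is the only substantive point.

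What the paper uses instead is the \emph{relative, pointed} form of local perfectness, \cite[lemma 5]{luzgarev-stavrova}: for each prime \(\mathfrak p\) there are \(s \in K \setminus \mathfrak p\), an isotropic pinning \((T, \Psi)\) of \(G_s\) of rank \(\geq 2\), and \(n \geq 1\) with \(\elem_{G, T, \Psi}(x^n K_s[x, y]) \leq [\elem_{G, T, \Psi}(x K_s[x, y]), \elem_{G, T, \Psi}(x K_s[x, y])]\). Here every root argument on the right-hand side lies in the ideal \(x K_s[x, y]\), i.e.\ all entries are pointed morphisms, so the identity evaluates \emph{entrywise} on homotope points and lands in \([\elem_{G, T, \Psi}(\mathcal R^{(s^\infty)}), \elem_{G, T, \Psi}(\mathcal R^{(s^\infty)})]\), whose image lies in \([\elem_G(\mathcal R), \elem_G(\mathcal R)]\) by lemma \ref{invariance}. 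The price is the exponent \(n\) on the left: only arguments of the shape \(u^n v\) are covered, and this is exactly where lemma \ref{power-idem} (power idempotence of \(\mathcal R^{(s^\infty)}\)) is needed to conclude that all of \(t_\alpha(e_i \cdot \mathcal R^{(s^\infty)})\) lies in the commutator subgroup. Your proposal contains neither ingredient---the pointed relative form nor the power-idempotence reduction---and your appeal to the finite level \(\mathcal R^{(a_i^m)}\) of theorem \ref{e-discr} does not substitute for either; without them, the patching in your final paragraph has nothing to patch.
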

\begin{proof}
    By \cite[lemma 5]{luzgarev-stavrova} for every prime ideal
    \(
        \mathfrak p \leqt K
    \) there are
    \(
        s \in K \setminus \mathfrak p
    \), an isotropic pinning
    \(
        (T, \Psi)
    \) of \(G_s\) of rank \(\geq 2\), and
    \(
        n \geq 1
    \) such that
    \[
        \elem_{G, T, \Psi}(x^n K_s[x, y])
        \leq [
            \elem_{G, T, \Psi}(x K_s[x, y]),
            \elem_{G, T, \Psi}(x K_s[x, y])
        ].
    \]
    Thus by lemma \ref{power-idem} for every
    \(
        \alpha \in \Psi
    \) and basis vector
    \(
        e_i \in P_\alpha
    \) we have
    \[
        t_\alpha(e_i \cdot \mathcal R^{(s^\infty)})
        \leq [
            \elem_{G, T, \Psi}(
                \mathcal R^{(s^\infty)}
            ),
            \elem_{G, T, \Psi}(
                \mathcal R^{(s^\infty)}
            )
        ].
    \]
    This implies the claim.
\end{proof}

\bibliographystyle{plain}
\bibliography{references}

\end{document}